\newtheorem{theorem}{Theorem}[section]
\theoremstyle{definition}
\newtheorem{definition}{Definition}[section]
\newtheorem{lemma}{Lemma}[section]
\newtheorem{corollary}{Corollary}[section]
\newtheorem{remark}{Remark}[section]
\newcommand{\E}{\mathbf E}
\renewcommand{\P}{\mathbf P}
\newcommand{\R}{\mathbb R}
\newcommand{\e}{\mathrm e}
\newcommand{\N}{\mathbb N} 
\newcommand*\diff{\mathop{}\!\mathrm{d}}
\begin{document}

\title[Metric dimension of some large trees]{Metric dimension of critical Galton-Watson trees and linear preferential attachment trees}
\author{J\'{u}lia Komj\'{a}thy$^\star$ and Gergely \'Odor$^\dagger$}
 
\address{$^\star$ Department of Mathematics and Computer Science, Eindhoven University of Technology, The Netherlands\\
$^\dagger$ School of Computer and Communication Sciences, \'Ecole Polytechnique F\'ed\'erale de Lausanne (EPFL)
}
\thanks{$\star$ Postal address: PO Box 513
5600 MB Eindhoven, The Netherlands}
\thanks{$\dagger$ Postal address: EPFL IC IINFCOM INDY2
Station 14
CH-1015 Lausanne, Switzerland}
\thanks{Acknowledgements. The work of G. \'Odor was supported by the Swiss National Science Foundation under grant number 200021-182407.}
\email{j.komjathy@tue.nl, gergely.odor@epfl.ch}

\keywords{Random growing trees, networks, fringe trees, metric dimension, source location, uniform random trees}
\subjclass[2020]{Primary:
60C05, 
Secondary:
05C80, 
05C05, 
60F15, 
}

\maketitle

\begin{abstract}
The metric dimension of a graph $G$ is the minimal size of a subset $R$ of vertices of $G$ that, upon reporting their graph distance from a distingished (source) vertex $v^\star$, enable unique identification of the source vertex $v^\star$ among all possible vertices of $G$. In this paper we show a Law of Large Numbers (LLN) for the metric dimension of some classes of trees: critical Galton-Watson trees conditioned to have size $n$, and growing general linear preferential attachment trees. The former class includes uniform random trees, the latter class includes Yule-trees (also called random recursive trees), $m$-ary increasing trees, binary search trees, and positive linear preferential attachment trees. In all these cases, we are able to identify the limiting constant in the LLN explicitly. Our result relies on the insight that the metric dimension can be related to subtree properties, and hence we can make use of the powerful fringe-tree literature developed by Aldous and Janson \emph{et al}. 
\end{abstract}      
\section{Introduction}
The metric dimension is a notion originating from combinatorics, first defined by Slater \cite{slater1975leaves} and independently by Harary and Melter \cite{harary1976metric}. 
Heuristically, the problem can be described in terms of \emph{source-detection}: given a graph $G=(V,E)$ with an unknown special vertex $v^\star$, we would like to identify $v^\star$ based on limited observations. We think of $v^\star$ as the source of a spreading process (say, a color red, that can be thought of being an infection, or any type of information) on the graph. The spreading starts at time $t=t_0=0$, when $v^\star$ becomes red. The color then spreads at unit speed across edges: each  direct neighbor of $v^\star$ is colored red at time $t=1$, each second neighbor at time $t=2$, and so on. Vertices keep their color forever. We are allowed to place, in advance,  \emph{sensor vertices} on the graph, forming a sensor set $R\subset V$. Sensor vertices report  their coloring/infection time. Based on the vector of these infection times, we would like to uniquely identify the source vertex $v^\star$. The minimal number of sensors needed for perfect detection, no matter what the location of $v^\star$, is called the \emph{metric dimension} (MD) of the graph, that we denote by $\beta(G)$. Any set of sensors that can uniquely identify the source vertex $v^\star$ (no matter what its location is) is called a \emph{resolving set}.  

\emph{Algorithmic aspects.} Computing resolving sets or even the metric dimension for general graphs is shown to be NP-hard \cite{khuller1996landmarks} and it is approximable only to a factor of $\log(N)$ \cite{beerliova2006network,hauptmann2012approximation}. The MD of specific deterministic graph families has been extensively studied, we refer to \cite{8391927} for a list of references. For instance, for trees the MD can be written as the difference of the number of leaves and so-called exterior major vertices of the tree (vertices of at least degree $3$ that have a line-graph leading to a leaf), both of which can be computed in linear time \cite{khuller1996landmarks}.  We mention that the MD has deep connections to the automorphism group of the graph $G$ \cite{bailey2011base,caceres2010determining,garijo2014difference}, and hence the graph isomorphism problem \cite{babai1980random}.
 
\emph{Asymptotic results.} From the probabilistic point of view little is known about the \emph{asymptotic behaviour} of MD  of random graph families as their sizes tend to infinity.
 A pioneering work \cite{bollobas2012metric} determines the asymptotics of MD of Erd\H{o}s-R\'enyi random graphs. In this Law of Large Numbers (LLN) type of result, the authors showed a surprising non-monotonous zig-zag phenomenon of the metric dimension as the average degree increases from bounded to linear in the graph size. 
A central limit theorem (CLT) type result for uniform random trees was determined in \cite{mitsche2015limiting}, and also for  subcritical Erd\H{o}s-R\'enyi random graphs.

\emph{Our contribution.} In this paper, we provide LLN type results for two general distributions on trees: families of growing trees that grow according to general linear preferential attachment schemes, and conditioned critical Galton-Watson trees that include uniform random trees.

We describe these families briefly. 
 In a general linear preferential attachment tree, there are two parameters, $\rho>0$ and $\chi\in \R$.  We start with a single root vertex. When there are $i$ vertices, we attach the $(i+1)$-st vertex to one of the existing vertices $v\le i$ with probability proportional to $(\rho+\chi\mathrm{deg}_i(v))$, where $\mathrm{deg}_i(v)$ is degree of vertex $v$ after $i$ vertices have been added. 
 Clearly, due to the normalization, only the quotient $\rho/\chi$ matters, and for the rest of the paper wlog we only consider $\chi \in \{-1,0,1\}$. When $\chi=-1$, we require $\rho$ to be an integer. 

We explain now why this class of trees contain $m$-ary increasing trees, binary search trees, and uniform recursive trees as well as rich-get-richer trees, that are the `usual'   linear preferential attachment trees. When we take $\rho=m$ and $\chi=-1$, we obtain the  \emph{$m$-ary increasing tree}: In its original definition of an  \emph{$m$-ary increasing tree} each vertex has the potential to have $m$ labeled offspring. The tree starts with a single vertex (the root) at step $1$, and at each step a new vertex arrives. When the tree has $i$ vertices, a new vertex can attach to $mi-(i-1)=(m-1)i+1$ possible places, since out of the $mi$ possible places, $i-1$ are already taken (only the root does not have a parent). An \textit{ $m$-ary increasing tree} with $n$ vertices, is constructed by starting with a single root vertex, and placing the $(i+1)$-st vertex uniformly randomly among the $(m-1)i+1$ possible places \cite{holmgren2017fringe}. The probability that the $(i+1)$-st vertex connects to vertex $v\le i$ is thus proportional to $m-\deg_i(v)$. Hence, we recognise the formula for $m=\rho$ and $\chi=-1$.

For $\rho=2$ and $\chi=-1$, the binary increasing tree corresponds to another well-known tree:  the random \emph{binary search tree}, an object that gained attention in computer science. In (the original definition of) a binary search tree, each vertex can store a single key and can have at most two children. The keys can be thought of as i.i.d.\ uniform random  variables on $[0,1]$ (this is a representation used by Devroye in \cite{Devr02}). Initially, the first key $K_1$ arrives and is placed at the root. This makes the root a \emph{full} vertex. Upon filling, every vertex creates two \emph{potential} vertices, one on the left and one on the right, that can receive a key each. These potential vertices do not count as part of the tree yet, only once they contain a key and become full vertices.  After the tree has $i$  keys, the $(i+1)$-st key $K_{i+1}$ arrives and is compared to the key in the root. If $K_{i+1}<K_1$, it is pushed to the left (otherwise to the right). Then it is compared to the key occupying the vertex that is the left (resp. right) child of the root, and again pushed left (resp. right) if it is less (resp. larger) than the key in that vertex. The procedure continues until the key finds an potential vertex and occupies it. Since only the permutation of the keys matters, it can be shown that when the tree has $i$ full vertices, and hence $i+1$ potential vertices, the $(i+1)$-st vertex is equally likely to be placed at any of these potential vertices. Hence, the probability that a full vertex with  $v\le i$ with degree\footnote{Potential vertices do not contribute to the degree, only full vertices do.} $\deg_i(v)$ gets a new child in step $i+1$ has probability $(2-\deg_i(v))/(i+1)$,  and we get back $\rho=2$, $\chi=-1$.  

A similar construction exists for $m>2$, called the $m$-ary search tree, when each vertex can store up to $m-1$ keys. This tree, however,  is not equivalent to the $m$-ary increasing tree \cite{holmgren2017fringe}, and we omit studying them further in this paper. Binary search trees are also the tree-representation of the Quicksort algorithm \cite{Knut98}.  Many of their properties are well studied, including Law of Large Numbers and Central Limit Theorems, see e.g. \cite{Devr02, Fuch08} such as the proportion of $k$-protected nodes or subtree sizes.

 The \emph{random recursive tree} is constructed analogously to the previous construction, except there is no dependence on the degree: starting with a single root vertex, the $(i+1)$-st vertex attaches uniformly to each of the already present $i$ vertices by an edge. This case corresponds to $\rho=1, \chi=0$.
Random recursive trees have a natural correspondence to binary search trees, and so often they are treated together \cite{Knut97}. They are also called Yule-trees, due to the fact that they can be naturally embedded in a Yule-process, and hence they have connections to phylogenetic trees \cite{BluFraJan06}.  

 The `usual' linear preferential attachment tree, also called \emph{rich-get-richer} tree, is constructed by taking $\rho>0$, $\chi=1$. In this case the $(i+1)$-st vertex attaches to $v\le i$ with probability proportional to $\rho+\deg_i(v)$.
 The $\rho=\chi=1$ case corresponds to the positive linear preferential attachment tree, which was informally introduced by Barab\'asi and Albert \cite{barabasi1999emergence}, although they allowed general graphs, not only trees. This is the model that produces power-law degree distributions \cite{BolRioSpe01}, see also Hofstad \cite{Hofs17} and the survey \cite{holmgren2017fringe}.
 Positive linear preferential attachment trees have already been studied in the context of source location \cite{jog2016analysis}, with the difference that the authors of \cite{jog2016analysis} consider snapshot-based source location and the MD is connected to sensor-based source location \cite{zejnilovic2013network}. 
 
The survey \cite{holmgren2017fringe} gives and excellent overview of the literature on various properties of all these growing trees, hence we refer the reader there for further literature.

Our main results can be summarised in the following two meta-theorems. 
\begin{theorem}[Meta-theorem about growing trees]
\label{thm:meta} Let $(\mathcal{T}_n^{(\rho,\chi)})_{n\ge 1}$ be a sequence of random growing general linear preferential attachment trees with $n$ vertices, with growth parameters $\rho>0$ and $\chi\in\{-1,0,1\}$, with $\rho\in \N$ when $\chi=-1$. Then,
\begin{equation}
\label{eq:meta}
\frac{\beta(\mathcal{T}^{(\rho,\chi)}_n)}{n} \xrightarrow{a.s. } c_{(\rho,\chi)}
\end{equation}
where $c_{(\rho,\chi)}\in(0,\infty)$ is a  constant that we determine \emph{explicitly}.
\end{theorem}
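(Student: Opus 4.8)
The plan is to reduce $\beta$ to an additive functional of fringe trees and then invoke the fringe-tree Law of Large Numbers available for these growing families. First I would recall the combinatorial description of the metric dimension of a tree from \cite{khuller1996landmarks}: writing $L(T)$ for the number of leaves and $\mathrm{ex}(T)$ for the number of \emph{exterior major vertices} (vertices of degree at least $3$ possessing at least one leaf whose nearest major vertex is itself), one has $\beta(T)=L(T)-\mathrm{ex}(T)$ for every tree that is not a bare path. I then root $\mathcal{T}_n^{(\rho,\chi)}$ at its initial vertex and observe that both terms are additive over fringe trees: writing $T_v$ for the subtree of descendants of $v$, a non-root vertex is a leaf iff $|T_v|=1$, and it is an exterior major vertex iff it has at least two children and at least one of its child-subtrees is a \emph{bare path}. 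Hence
\begin{equation}
\beta\big(\mathcal{T}_n^{(\rho,\chi)}\big)=\sum_{v}\Big(\mathbf 1[\,|T_v|=1\,]-g(T_v)\Big)+O(1),
\end{equation}
where $g$ is the $\{0,1\}$-valued functional recording the exterior-major property and the $O(1)$ absorbs the root. The decisive gain is that \emph{both} summands are bounded fringe functionals, so the high-degree hubs of preferential attachment trees never make a single term large.

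Next I would apply the fringe-tree convergence for linear preferential attachment and increasing trees developed in the Crump--Mode--Jagers and P\'olya-urn framework surveyed in \cite{holmgren2017fringe}: for each fixed rooted shape $t$ the fraction of vertices $v$ with $T_v\cong t$ converges almost surely to $p_t^{(\rho,\chi)}:=\P(\mathcal{T}_\infty^{(\rho,\chi)}\cong t)$, where $\mathcal{T}_\infty^{(\rho,\chi)}$ denotes the limiting fringe tree. For the leaf term this gives $\tfrac1n L(\mathcal{T}_n^{(\rho,\chi)})\to\P(|\mathcal{T}_\infty^{(\rho,\chi)}|=1)$ directly. The one genuine obstacle is that $g$ is \emph{not} local: deciding whether a child-subtree is a bare path may require inspecting arbitrarily deep structure, so $g$ is discontinuous in the fringe topology and the shape-counting limit does not apply to it verbatim. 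I would resolve this by truncation. Let $g_K$ be the same indicator restricted to bare paths of length at most $K$; then $g_K$ depends only on the depth-$K$ neighbourhood, is a finite combination of fixed-shape indicators, and so $\tfrac1n\sum_v g_K(T_v)\to\E[g_K(\mathcal{T}_\infty^{(\rho,\chi)})]$ almost surely. Since $0\le g-g_K$ and $\sum_v(g-g_K)(T_v)$ is at most the number of vertices subtending a bare path longer than $K$, it remains to show this count is $o(n)$ uniformly in $n$, which follows because long bare paths are exponentially rare in all four families. Monotone convergence then gives $\E[g_K]\uparrow\E[g]$, and combining the three limits yields the claim with
\begin{equation}
c_{(\rho,\chi)}=\P\big(|\mathcal{T}_\infty^{(\rho,\chi)}|=1\big)-\E\big[g(\mathcal{T}_\infty^{(\rho,\chi)})\big].
\end{equation}

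Finally, to render $c_{(\rho,\chi)}$ \emph{explicit} I would evaluate the two probabilities under the limiting fringe law, which for each value of $\chi\in\{-1,0,1\}$ admits a concrete description through its defining P\'olya-urn and split-time dynamics; the leaf probability and the exterior-major probability (the latter obtained by summing a geometric-type series over bare-path lengths together with the law of the root's child multiplicity) then reduce to closed forms in $\rho$. I expect the non-locality of the exterior-major functional to be the main \emph{technical} hurdle, handled by the truncation above, while the closed-form evaluation of $\E[g]$ is the main \emph{computational} one, since it requires the joint law of the root degree and the child-subtrees of $\mathcal{T}_\infty^{(\rho,\chi)}$.
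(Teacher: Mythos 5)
Your skeleton coincides with the paper's: Slater's formula $\beta(T)=|L(T)|-|K(T)|$, the observation that being a leaf is a subtree property while being an exterior major vertex is one up to a bounded correction, and then a fringe-tree LLN identifying $c_{(\rho,\chi)}$ as a difference of two probabilities under the limiting fringe law. Two technical points first. Your claimed equivalence ``a non-root vertex is an exterior major vertex iff it has at least two children and one child-subtree is a bare path'' is false in one direction: the exterior path to a leaf may exit \emph{upward} through the parent (take a root of degree one joined by a path of degree-two vertices to a vertex $v$ of degree three none of whose child-subtrees is a path; then $v\in K(T)$ but your functional $g(T_v)=0$). The paper's Lemma 3.2 shows this failure is confined to at most one vertex, so your $O(1)$ survives, but for a reason beyond ``absorbing the root.'' Second, your truncation detour is unnecessary: the fringe LLN the paper quotes (Nerman--Jagers via Theorem 5.14 of the Holmgren--Janson survey) holds for \emph{every} subtree property, i.e.\ arbitrary sets of finite rooted trees, because the empirical fringe distribution converges pointwise to a probability measure on finite trees, and pointwise convergence of probability measures upgrades to total-variation convergence (Scheff\'e). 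In particular your unproven uniform claim that ``long bare paths are exponentially rare'' can be bypassed entirely: the $\limsup$ of the truncation error is bounded by the limiting fringe mass of paths longer than $K$, which vanishes as $K\to\infty$ with no rate needed.

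The genuine gap is the explicit constant, which is what the theorem asserts and where essentially all of the paper's work lies. You stop at $c_{(\rho,\chi)}=\P(|\mathcal{T}_\infty|=1)-\E[g(\mathcal{T}_\infty)]$ and suggest that $\E[g]$ reduces to ``a geometric-type series over bare-path lengths together with the law of the root's child multiplicity.'' This underestimates the structure of the limiting fringe tree, which is the CMJ tree with inter-birth times $\mathrm{Exp}(\rho+\chi(j-1))$ killed at an \emph{independent} $\mathrm{Exp}(\rho+\chi)$ time $\tau$: the root's child-subtrees are i.i.d.\ only \emph{conditionally} on $\{\tau=x\}$ and on the root degree $\kappa$, with birth times of density proportional to $e^{\chi y}$ on $[0,x]$ (the paper's Lemma 4.1); the probability that a child-subtree is a bare path is not a geometric series in the path length but the convolution of that birth-time density with the doubly-exponential tail $\P(H_{\rho,\rho+\chi}>t)=\exp\bigl(-\rho t+\tfrac{\rho}{\rho+\chi}(1-e^{-(\rho+\chi)t})\bigr)$ of the ``racing clocks'' variable (Lemmas 4.4--4.5); and one must then feed this $q(x)$ into the conditional generating function of $\kappa$ (negative binomial, Poisson, or binomial according to $\chi$, Lemma 4.2) via $\P(\mathcal F\in\mathcal P_K\mid\tau=x)=1-G_{\kappa\mid x}(1-q)-q\,\P(\kappa=1\mid x)$, and finally integrate over $x$ against $(\rho+\chi)e^{-(\rho+\chi)x}$. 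None of this is routine bookkeeping, and without it $c_{(\rho,\chi)}$ remains implicit; as written, your proposal proves the a.s.\ convergence to \emph{some} constant but not the explicit identification the statement promises.
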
 
We mention that our method provides almost sure LLN for a much larger class of random growing trees.  This class is the class of trees that can be embedded in a Crump-Mode-Jagers branching process with finite Malthusian parameter; e.g. sub-linear preferential attachment trees, $m$-ary search trees, fragmentation trees, etc. We refer the reader to various classes of such trees to the survey of Janson and Holmgren \cite{holmgren2017fringe}.

Our second result is motivated by reproducing LLN of the metric dimension of \emph{uniform random trees}  \cite{mitsche2015limiting}. A uniform random tree on $n$ vertices is a tree that is chosen uniformly at random (u.a.r.) from the possible $n^{n-2}$ labeled trees on $n$ vertices. As mentioned before, LLN and even CLT for the MD of uniform random tree was proved in \cite{mitsche2015limiting} using analytic combinatorics. We are able to reproduce the LLN result with a very short proof, and in higher generality. 

Namely a uniform random tree has the same distribution as a Galton-Watson branching process, with Poisson offspring distribution with mean $1$, conditioned to have total progeny $n$, see e.g. \cite[Proof of Theorem 3.17]{Hofs17}. Hence it is equivalent to determine the MD of conditioned GW trees.

 A \textit{Galton-Watson tree} is a random tree defined by the offspring distribution $\xi$ taking values in $\mathbb N=\{0,1,\dots\}$. Initially a single individual (vertex) is born, which becomes the root of the tree, and the root gives rise to $\xi$ children. Thereafter, each newly born individual samples its own independent copy of $\xi$ and gives rise to that many new children, and the process continues recursively. We consider Galton-Watson trees conditioned to have $n$ vertices, so we must assume that $\P(\xi=0)\ne 0$, otherwise the process never ends. We will assume that the Galton-Watson trees are critical, i.e., $\E[\xi]=1$, which is also fairly natural for conditioned Galton-Watson trees (see Remark 3.1 of \cite{janson2013asymptotic}), since in this case a non-trivial limiting measure on trees exists (called the incipient infinite tree).

\begin{theorem}[Conditioned Galton-Watson trees]
\label{thm:GW}
Let $\mathcal{GW}_n$ be a sequence of critical Galton-Watson trees conditioned to have $n$ vertices, with offspring distribution $\xi$, where $\E[\xi]=1$ and $\E[\xi^2]<\infty$. Let $p_k=\P(\xi=k)$ for $k \in \mathbb{N}$. Then, 
\begin{equation}
\frac{\beta(\mathcal{GW}_n)}{n} \xrightarrow{p} p_0-1+G_\xi\left(1-\frac{p_0}{1-p_1}\right)+\frac{p_1p_0}{1-p_1}.
\end{equation}
where $G_\xi(x)=\sum_{n=0}^\infty p_n x^n$ is the probability generating function of $\xi$ evaluated at $x$.
\end{theorem}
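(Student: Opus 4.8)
The starting point is the classical characterization of the metric dimension of a tree $T$ that is not a path: $\beta(T) = L(T) - \mathrm{ex}(T)$, where $L(T)$ is the number of leaves and $\mathrm{ex}(T)$ is the number of exterior major vertices (vertices of degree $\geq 3$ admitting at least one terminal leg, i.e.\ a path of degree-$2$ vertices joining it to a leaf). Since $\mathcal{GW}_n$ is a path with probability $o(1)$, it suffices to control the densities of $L(\mathcal{GW}_n)$ and $\mathrm{ex}(\mathcal{GW}_n)$. The plan is to express both as additive functionals $\sum_v f(T_v)$, where $T_v$ is the fringe subtree rooted at $v$ in the rooted tree $\mathcal{GW}_n$, and then to invoke the fringe-tree law of large numbers for conditioned critical Galton--Watson trees (Aldous; Janson): for any bounded functional $f$ on rooted trees, $n^{-1}\sum_v f(T_v)\xrightarrow{p}\E[f(\mathcal{T})]$, where $\mathcal{T}$ is the unconditioned critical GW tree with offspring law $\xi$.

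For the leaves this is immediate: up to the single root vertex, $v$ is a leaf iff its fringe tree is a single vertex, so $L(\mathcal{GW}_n) = \sum_v \mathbf{1}[T_v = \bullet] + O(1)$ and hence $L(\mathcal{GW}_n)/n \xrightarrow{p} \P(\mathcal{T}=\bullet) = p_0$. For the exterior major vertices I would first localize the defining condition. Removing a vertex $v$ splits $T$ into $\deg(v)$ branches, and a terminal leg corresponds to a branch that is a bare path with $v$ as an endpoint. For a non-root $v$ with children $c_1,\dots,c_k$, the downward branches are exactly the fringe trees $T_{c_1},\dots,T_{c_k}$, such a branch being a leg iff $T_{c_i}$ is a path; the single upward branch is a leg iff the complement $T\setminus T_v$ is a bare path. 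Thus $v$ is exterior major iff it has $\geq 2$ children and either some $T_{c_i}$ is a path, or the upward branch is a bare path. The first two conditions depend only on $T_v$, yielding the bounded functional $f(t)=\mathbf{1}[\text{root of }t\text{ has }\geq 2\text{ children and at least one child-subtree is a path}]$.

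The key technical point is that the upward-leg contribution is negligible. The set $\{v: T\setminus T_v \text{ is a bare path}\}$ consists precisely of the vertices on the maximal initial bare path from the root (each ancestor having a unique child); its cardinality is tight (it converges in distribution to a geometric-type variable with parameter $p_1$ via the Kesten/local-weak limit at the root), hence $o(n)$ with high probability. Therefore $\mathrm{ex}(\mathcal{GW}_n) = \sum_v f(T_v) + o_p(n)$, and the fringe LLN gives $\mathrm{ex}(\mathcal{GW}_n)/n \xrightarrow{p} \E[f(\mathcal{T})]$. The main obstacle is exactly this directional bookkeeping of legs combined with the fact that ``being a path'' is a non-local functional; I would dispatch it using the boundedness of $f$ (so that the additive-functional LLN applies without any local-continuity hypothesis) together with the tightness bound above.

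It then remains to evaluate $\E[f(\mathcal{T})]$. Let $q=\P(\mathcal{T}\text{ is a path})$; conditioning on the root's number of children gives the fixed-point relation $q = p_0 + p_1 q$, whence $q = p_0/(1-p_1)$. Conditioning on the root having $k\geq 2$ children, the child-subtrees are independent copies of $\mathcal{T}$, so the probability that at least one is a path equals $1-(1-q)^k$; summing against $p_k$ gives, after a short computation with the generating function $G_\xi$,
\begin{equation}
\E[f(\mathcal{T})]=\sum_{k\geq 2}p_k\bigl(1-(1-q)^k\bigr)=1-G_\xi(1-q)-p_1 q.
\end{equation}
Combining $\beta(\mathcal{GW}_n)/n \xrightarrow{p} p_0 - \E[f(\mathcal{T})]$ and substituting $q=p_0/(1-p_1)$ reproduces exactly the claimed limit $p_0-1+G_\xi\bigl(1-\tfrac{p_0}{1-p_1}\bigr)+\tfrac{p_1p_0}{1-p_1}$. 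As a sanity check, the Poisson$(1)$ case (uniform random trees) yields the numerical value $\approx 0.1408$, matching the constant obtained in \cite{mitsche2015limiting}.
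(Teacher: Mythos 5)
Your proposal is correct and follows essentially the same route as the paper: Slater's formula, the fringe-tree LLN of Theorem \ref{thm:GW_helper} applied to the indicator that the root has at least two children and some child-subtree is a descending line, the fixed point $q=p_0/(1-p_1)$, and $\E[f(\mathcal{T})]=1-G_\xi(1-q)-p_1q$, exactly as in the paper's Lemma \ref{lem:PFmidX}. The only difference is bookkeeping: where you dispatch the upward-leg vertices by a tightness/local-limit argument (and your characterization of that set is slightly imprecise, since the root may also carry a line-graph sibling branch), the paper's Lemma \ref{lem:PE} shows deterministically that $n_{\mathcal{P}_K}(T)$ and $|K(T)|$ differ by at most one for \emph{every} tree, which makes that correction trivial.
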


As a corollary of this theorem, by substituting $\xi=\mathrm{Poi}(1)$ we recover the result of \cite{mitsche2015limiting} on uniform random trees.

\begin{corollary}
The metric dimension  of a uniform random tree $\mathcal U_n$ on $n$ vertices satisfies the following Law of Large Numbers:  \[ \frac{\beta(\mathcal U_n)}{n} \xrightarrow{p} e^{-1}-1+e^{-\frac{1}{e-1}}+ \frac{e^{-1}}{e-1} \approx 0.14076941.\]
\end{corollary}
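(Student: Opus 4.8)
The plan is to invoke Theorem~\ref{thm:GW} with the specific offspring law $\xi\sim\mathrm{Poi}(1)$. The key structural input, already recorded in the excerpt, is that a uniform random tree on $n$ labeled vertices is distributed exactly as a critical Galton--Watson tree with mean-one Poisson offspring distribution conditioned to have total progeny $n$; thus $\mathcal U_n$ and $\mathcal{GW}_n$ agree in distribution for this choice of $\xi$, and the claimed convergence in probability will follow once I verify the hypotheses and evaluate the right-hand side. First I would check the moment conditions: for $\xi\sim\mathrm{Poi}(1)$ one has $\E[\xi]=1$, so the tree is critical as required, and $\E[\xi^2]=\Var(\xi)+\E[\xi]^2=1+1=2<\infty$, so Theorem~\ref{thm:GW} applies verbatim.

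Next I would record the two relevant point masses and the probability generating function. We have $p_0=\P(\xi=0)=\e^{-1}$ and $p_1=\P(\xi=1)=\e^{-1}$, while $G_\xi(x)=\sum_{k\ge 0}\frac{\e^{-1}}{k!}x^k=\e^{x-1}$. Substituting into the limiting constant of Theorem~\ref{thm:GW}, the normalization simplifies to $\frac{p_0}{1-p_1}=\frac{\e^{-1}}{1-\e^{-1}}=\frac{1}{\e-1}$, whence $G_\xi\!\left(1-\frac{p_0}{1-p_1}\right)=\e^{(1-\frac{1}{\e-1})-1}=\e^{-\frac{1}{\e-1}}$, and the last summand becomes $\frac{p_1p_0}{1-p_1}=\frac{\e^{-2}}{1-\e^{-1}}=\frac{\e^{-1}}{\e-1}$. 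Collecting the four contributions yields $\e^{-1}-1+\e^{-\frac{1}{\e-1}}+\frac{\e^{-1}}{\e-1}$, matching the stated closed form, whose numerical value is $\approx 0.14076941$.

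Since the corollary is a direct specialization of Theorem~\ref{thm:GW}, there is no genuine obstacle: the proof is a substitution-and-simplification exercise. The only points requiring care are the elementary algebraic rewriting of $\frac{\e^{-1}}{1-\e^{-1}}$ as $\frac{1}{\e-1}$ (clearing the exponentials by multiplying numerator and denominator by $\e$) and confirming that the resulting exponent and final fraction coincide with the claimed expression.
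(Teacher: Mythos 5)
Your proposal is correct and is exactly the paper's argument: the authors also obtain the corollary by substituting $\xi\sim\mathrm{Poi}(1)$ into Theorem~\ref{thm:GW}, using the cited fact that $\mathcal U_n$ is distributed as a critical Poisson(1) Galton--Watson tree conditioned on total progeny $n$. Your algebra ($p_0=p_1=\e^{-1}$, $G_\xi(x)=\e^{x-1}$, $p_0/(1-p_1)=1/(\e-1)$) and the verification of the moment hypotheses are all correct, merely spelling out what the paper leaves as an immediate substitution.
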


\emph{Methodology.} 
The metric dimension of a given fixed tree can be computed explicitly using the number of leaves of the tree and the number of exterior major vertices, i.e., vertices of at least degree $3$ that have a line-graph leading to a leaf, see Theorem \ref{thm_slater} below.

The novel insight in our proofs is that both the asymptotic proportion of leaves as well as that of exterior major vertices of random trees $\mathcal{T}$ can be computed using results from the \emph{fringe tree literature} initiated by Aldous in \cite{aldous1991asymptotic}. 
A fringe tree of a rooted tree, in plain words, is the random subtree obtained by choosing a vertex u.a.r.\ in the tree and taking its subtree pointing away from the root. The distribution of fringe trees is shown to converge for a large class of trees. So, fringe trees of a \emph{rooted tree} $\mathcal{T}$ helps us to compute the asymptotic proportion of vertices $v$ in $\mathcal{T}$ that have a certain property $\mathcal{P}$, with the limitation that $\mathcal{P}$ must be a \emph{subtree-property}. A subtree property is any property that depends only on the subtree of $\mathcal{T}$ rooted at $v$ pointing away from the root. It is easy to see that being a leaf is a subtree-property. While strictly speaking being an exterior major vertex is not a subtree-property, we find a subtree property that serves as a good proxy.

The use of fringe tree-methodology allows us to use probabilistic arguments that are often much shorter than the analytic-combinatorial arguments used in \cite{mitsche2015limiting}: the proportion of fringe-trees satisfying a given subtree property converges. Moreover, since the fringe distribution of several general random tree families are known \cite{holmgren2017fringe, janson2013asymptotic}, our proofs are quite general. Our results hold for critical Galton-Watson trees with a finite variance degree distribution (which includes, among others, uniform random trees, Motzkin trees, random binary trees) and all linear preferential attachment trees (which includes, among others, binary search trees, random recursive trees, positive linear preferential attachment trees) \cite{drmota2009random}.

The fringe tree literature has CLT type results, which suggests that many of our results in this paper can also be extended to a CLT. In particular, the CLT of metric dimension for binary search trees and uniform recursive trees should be a consequence of the CLT proved in \cite{holmgren2015limit}. For the other cases, this is not a trivial extension, and we leave it for future work.

\emph{Other contexts.} Resolving sets have wide rage of applications, including robot navigation \cite{khuller1996landmarks, shao2019metric}, computational chemistry \cite{chartrand2000resolvability}, network discovery \cite{beerliova2006network} and source detection. In particular, source detection has a large body of literature. From the statistical point of view, motivated by the problem of determining the authors of online viruses, malicious information, and fake news, the seminal work  \cite{ShaZam09} investigated the question: \emph{Can we locate the source if we only observe the epidemic much later, when it has already infected a large fraction of the population?} 
Various statistical estimators of the source have been developed since, using e.g.\ belief propagation, subtree ranking, infection eccentricity, rumour centrality, and the minimum description principle \cite{AltBra14, bubeck2017finding, PraVreFal12, ShaZam09, ZhuYing16}. These methods use only binary information about the vertices (infected vs not infected at some time $t>t_0$) as observational input. In an applied setting, a possibly noisy observation of the \emph{infection times} at a few predetermined \emph{sensor} vertices might be readily available, and with this extra information we might be able to detect the source by observing only a small subset of the nodes \cite{Pinto12,ZhuCheYin16}. With the exception of the recent work of \cite{Lecomte2020NoisySL}, not much is known about the number of required sensors in source detection if the spreading of the epidemic is very noisy. On the other hand, if we assume no noise in the spreading of the epidemic and the observations, the minimum number of sensors required to perfectly locate the source is equivalent to the MD problem \cite{zejnilovic2013network}. If in addition, the start time of the epidemic ($t_0$) is unknown, the minimum number of required sensors becomes equivalent to the \emph{double metric dimension} problem \cite{chen2014approximability}. The algorithmic aspects of the double MD in the source location context were investigated in \cite{CelPavSpiThi15, chen2014approximability}, and the double MD of Erd\H{o}s-R\'enyi random graphs was computed by \cite{SpiCelThi18}. Recently, \cite{OdoThi19} studied a version of the MD in Erd\H{o}s-R\'enyi graphs, where the sensors can be placed sequentially based on the observation times of previously placed sensors. 

\emph{Organisation of the rest of the paper.}
In Section \ref{s:model} we define the notions we use precisely, and we give the formula for the constant $c_{(\rho,\chi) }$ for general linear preferential attachment trees. In Section \ref{s:methods} we explain the general methodological background about embedding discrete trees in (continuous time) Crump-Mode-Jagers trees, and fringe trees and subtree properties. In Section \ref{s:proof} we prove our results. 
\section{Definitions and numerical values for $c_{(\rho,\chi)}$ }\label{s:model}

\begin{figure}
  \includegraphics[width=\textwidth]{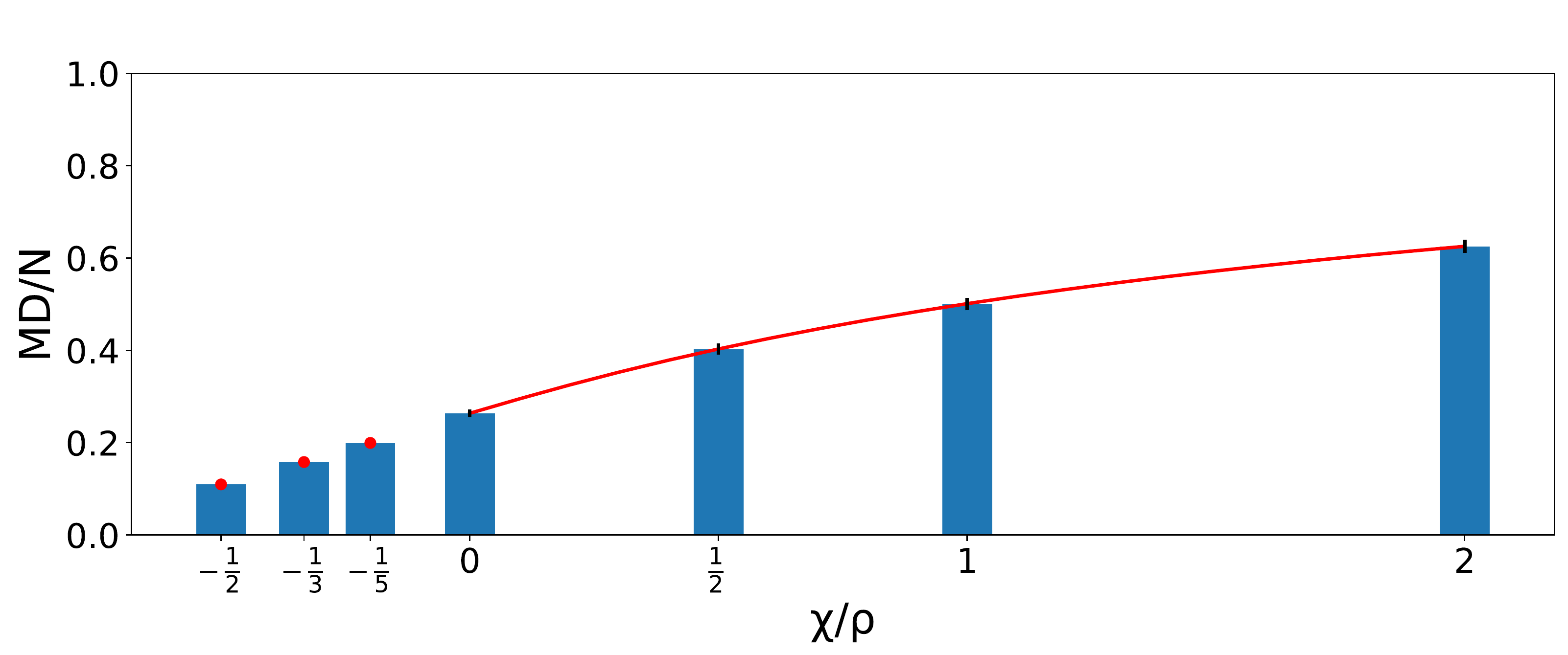}
   \captionsetup{width=\linewidth}
  \caption{The red dots and the red line show $c_{(\rho,\chi)}$ as a function of $\chi/\rho$ based on our theoretical results in Theorems \ref{thm:BST}, \ref{thm:RRT} and  \ref{thm:LPAT}. The blue bars show simulation results for $c_{(\rho,\chi)}$. We show the average of the normalized MD of 1000 independently simulated random trees with 1000 nodes. Unless they are too small to be visible on the plot, we also show the 95\% confidence intervals for the simulation results on top of the bar plots. }
  \label{fig:simulation}
\end{figure}

We start by giving a formal definition of the metric dimension.
\begin{definition}[MD]
\label{MD_def}
Let $G=(V,E)$ be a simple connected graph, and let us denote by $d(v,w) \in \mathbb{N}$ the length of the shortest path (that is, the number of edges) between nodes $v$ and $w$ that we call \emph{graph distance}. A subset $R\subseteq V$ is a \textit{resolving set} in $G$ if for every pair of nodes $u\ne v \in V$ there is a distinguishing node $w \in R$ for which $d(u,w)\ne d(v,w)$. The minimal cardinality of a resolving set is the \textit{metric dimension} (MD) of $G$, denoted by $\beta(G)$.
\end{definition}
The next definition helps us express the MD of fixed trees explicitly.
\begin{definition}[Leaves and exterior major nodes]
Let us denote by $\deg(v)$ the degree of a node $v\in V$. We say that a node $v \in V$ is a \textit{leaf} if $\deg(v)=1$, and it is a \textit{major node} if $\deg(v)\ge 3$. If a major node $v \in V$ has a path to a leaf that only contains degree-two vertices besides the beginning and the end of the path (i.e., a line-graph), we say that $v$ is an \textit{exterior major node}. Let us denote the set of leaves of $G$ by $L(G)$ and the set of exterior major nodes of $G$ by $K(G)$. 
\end{definition}
The following theorem characterises the metric dimension of a fixed tree. 
\begin{theorem}[Metric dimension of trees \cite{slater1975leaves}]
\label{thm_slater}
Consider a fixed tree $T$. If $T$ is a path graph, then $\beta(T)=1$. Otherwise,
\begin{equation}
\beta(T)=|L(T)|-|K(T)|.
\end{equation}
\end{theorem}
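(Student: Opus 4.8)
My plan is to treat the two asserted formulas separately and, in the generic (non-path) case, to reduce the identity $\beta(T)=|L(T)|-|K(T)|$ to the equivalent but more structural statement $\beta(T)=\sum_{v\in K(T)}(\mathrm{ter}(v)-1)$, where $\mathrm{ter}(v)$ is the number of \emph{legs} of $v$, a leg being a path from $v$ to a leaf all of whose internal vertices have degree two. The first step is to justify this reduction. If $T$ is not a path, then every leaf $\ell$ determines a unique leg: starting at $\ell$ and walking along the forced path through degree-two vertices, one must eventually reach a vertex of degree at least $3$, and that vertex is by definition an exterior major vertex. Thus each leaf lies on exactly one leg, so the legs are in bijection with the leaves and $\sum_{v\in K(T)}\mathrm{ter}(v)=|L(T)|$, while the number of summands is $|K(T)|$; subtracting gives $\sum_{v\in K(T)}(\mathrm{ter}(v)-1)=|L(T)|-|K(T)|$. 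The path case ($\beta(T)=1$) I would dispatch immediately: a single endpoint resolves a path, and no graph with at least two vertices has $\beta=0$.

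The engine of both bounds is an elementary \emph{projection lemma}. For distinct $u,u'$ and any vertex $r$, let $c$ be the vertex of the $u$-$u'$ path closest to $r$. Since $T$ is a tree, every geodesic from $r$ meets the path first at $c$, so $d(u,r)=d(u,c)+d(c,r)$ and likewise for $u'$, whence
\[
d(u,r)-d(u',r)=d(u,c)-d(u',c).
\]
Hence $r$ \emph{fails} to distinguish $u$ and $u'$ exactly when the $u$-$u'$ path has even length and $c$ is its midpoint; equivalently, writing $T_m$ for the component of the midpoint $m$ after deleting the two path-edges at $m$, a sensor set $R$ fails to separate $u,u'$ precisely when $R\subseteq T_m$.

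For the lower bound, fix $v\in K(T)$ and suppose two of its legs $P_i,P_j$ contain no sensor. Taking the neighbours $x_i\in P_i$, $x_j\in P_j$ of $v$, the path $x_i$-$v$-$x_j$ has length $2$ with midpoint $v$, and since no sensor lies on $P_i\cup P_j$, every sensor projects onto $v$; by the lemma $R$ cannot separate $x_i$ from $x_j$. So at most one leg of each $v$ is sensor-free, $R$ meets at least $\mathrm{ter}(v)-1$ legs of $v$, and as legs of distinct exterior major vertices are disjoint these sensors are distinct, giving $|R|\ge\sum_v(\mathrm{ter}(v)-1)$. For the matching upper bound I would exhibit the set $R^\star$ obtained by selecting, at each $v\in K(T)$, the terminal leaf of every leg but one; then $|R^\star|=\sum_v(\mathrm{ter}(v)-1)$ and it remains to check it is resolving. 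Using the lemma, if some pair $u\neq u'$ were unresolved then all of $R^\star$ would lie in $T_m$, so both components $S_a,S_b$ split off at $m$ along the path are sensor-free; tracing a leaf of $S_a$ (resp.\ $S_b$) up its leg I would locate an exterior major vertex with two sensor-free legs, contradicting the construction.

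The main obstacle is precisely this last verification. The delicate point is that the exterior major vertex furnishing two sensor-free legs need not be visible inside a single side $S_a$: if $S_a$ already contains an exterior major vertex all of whose legs lie in $S_a$, then (being entirely sensor-free) it supplies two sensor-free legs and we are done; but if $S_a$ is ``thin'' (essentially a single leg running up to $m$) the offending vertex is $m$ itself, which then receives one sensor-free leg through $a$ and, symmetrically, one through $b$. Making this dichotomy rigorous---showing that a sensor-free side always produces either an internal exterior major vertex with at least two legs, or a sensor-free leg terminating at $m$---is the combinatorial heart of the argument, and is where the structural definition of exterior major vertices and the finiteness of $T$ must be used with care. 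Everything else reduces to the projection identity and bookkeeping with the leaf--leg bijection.
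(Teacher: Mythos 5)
The paper does not actually prove Theorem \ref{thm_slater}: it defers to \cite{slater1975leaves} and offers only a one-line heuristic for the lower-bound direction (all but one terminal leaf per exterior major vertex must be a sensor). So there is no paper proof to match; judged on its own, your proposal takes the standard self-contained route -- the reduction $|L(T)|-|K(T)|=\sum_{v\in K(T)}(\mathrm{ter}(v)-1)$ via the leaf--leg bijection, the projection identity $d(u,r)-d(u',r)=d(u,c)-d(u',c)$, and a matching construction -- and this route does work. The reduction, the projection lemma, and the overall architecture are correct.

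Two points, however, need repair before this is a proof. First, a slip in the lower-bound bookkeeping: your legs all contain their base vertex $v$, so ``legs of distinct exterior major vertices are disjoint'' does not yield $\mathrm{ter}(v)-1$ \emph{distinct} sensors -- a single sensor placed at $v$ meets every leg of $v$ while contributing only one element to $R$. The fix is implicit in your own lemma: a sensor at $v$ lies in $T_v$ and separates nothing on the legs, so at least $\mathrm{ter}(v)-1$ of the sets $P_i\setminus\{v\}$ must contain sensors, and \emph{these} sets are pairwise disjoint across all legs and all exterior major vertices. Second, the dichotomy you leave as ``the combinatorial heart'' does close, but it must be executed: in a sensor-free side $S_a$, if some vertex has $T$-degree at least $3$, take one at maximal distance from $m$; its subtrees pointing away from $m$ are branchless, hence are at least two legs of a single exterior major vertex lying wholly in $S_a$, whose terminal leaves are both unsensored -- contradicting that $R^\star$ omits only one leaf per vertex. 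Otherwise $S_a$ is a path entered at an endpoint, giving a sensor-free leg reaching $m$. Your phrase ``the offending vertex is $m$ itself'' silently assumes $\deg_T(m)\ge 3$; if $\deg_T(m)=2$ then $m$ is not a major vertex and the two thin sides concatenate through it. That degenerate case dies immediately, but you should say why: then $T_m=\{m\}$, so the failure condition $R^\star\subseteq T_m$ forces $R^\star\subseteq\{m\}$, while $T$ not being a path guarantees some exterior major vertex with at least two legs (take a branch vertex of maximal depth), hence $R^\star$ is a \emph{nonempty} set of leaves and cannot equal $\{m\}$. The same nonemptiness of $R^\star$ is also quietly needed for pairs at odd distance, where any nonempty sensor set separates but the empty set does not.
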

We refer the reader to  \cite{slater1975leaves} for a proper proof, but we explain the formula heuristically. It is not hard to see that if two or more leaves are attached to a major node by line-graphs, then the vertices at equal distance from the major node on these lines are indistinguishable by sensors that do not fall into these lines. Hence, all but one of the terminal leaves of such lines have to be sensors.

Now we state our more detailed results about families of trees growing according to general linear preferential attachment schemes, that is, we refine Theorem \ref{thm:meta} and express the limiting constant $c_{(\rho,\chi)}$ of the MD explicitly. Some of the numerical values acquired from the Theorems \ref{thm:BST}, \ref{thm:RRT} and \ref{thm:LPAT} below are shown in Figure \ref{fig:simulation} along with numerical approximation given by computer simulations.
  
\subsection*{Random binary search tree and $m$-ary increasing trees}

Recall that in an $m$-ary increasing tree is equivalent to a general linear preferential attachment tree with $\rho=m$ and $\chi=-1$, and that the for $m=2$, an $m$-ary increasing tree is equivalent to a random binary search tree. 

We write
\begin{equation}
\label{eq:gammadef}
\gamma(s,t)=\int_0^t x^{s-1} e^{-x} \diff x
\end{equation}
for the lower incomplete gamma function, and  
\begin{equation}
\label{eq:binomdef}
\binom{m}{i,j}=\frac{m!}{i!j!(m-i-j)!}
\end{equation}
for the generalized binomial coefficient.

\begin{theorem}[MD of $m$-ary increasing trees]
\label{thm:BST}
Let $(\mathcal{T}_n^{(m,-1)})_{n\ge 1}$ be a growing sequence of random $m$-ary increasing trees with $n$ vertices. Then,

\begin{equation}
\label{eq:BST}
\frac{\beta(\mathcal{T}_n^{(m,-1)})}{n} \xrightarrow{a.s. } \sum_{j=1}^m \frac{m-1}{(m-1+j)m^j}\binom{m}{j} + \sum_{\substack{0\le i+j \le m\\ i\ne 0 }} A_{i,j} \gamma \left(\frac{i+j}{m-1}+1, \frac{im}{m-1} \right),
\end{equation}
where for all $(i,j) \in \mathbb N^2$ with $i+j \le m$ and $(i,j)\ne (1,m-1)$
\begin{alignat}{2}
A_{i,j} &= \frac{(-1)^{i}}{m^{i+j}}\binom{m}{i,j} e^{\frac{im}{m-1}} \left(\frac{m-1}{im}\right)^{\frac{i+j}{m-1}+1},
\end{alignat}
except for $(i,j)= (1,m-1)$ we have
\begin{alignat}{2}
A_{1,m-1} &= \left(1-\frac{m}{m^{m}}\right)e^{\frac{m}{m-1}}\left(\frac{m-1}{m}\right)^{\frac{m}{m-1}+1} .
\end{alignat}
In particular, for the binary search tree ($m=2$), this expression evaluates to
\begin{equation}
\frac{\beta(\mathcal{T}_n^{(2,-1)})}{n} \xrightarrow{a.s.} \frac{ 3 e^4 - 48 e^2 +233}{384} \approx 0.1096868681.
\end{equation}
\end{theorem}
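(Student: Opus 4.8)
The plan is to combine Slater's formula (Theorem \ref{thm_slater}) with the convergence of fringe-tree proportions. Since an $m$-ary increasing tree is a.s.\ not a path for large $n$, Theorem \ref{thm_slater} gives $\beta(\T_n^{(m,-1)})=|L(\T_n^{(m,-1)})|-|K(\T_n^{(m,-1)})|$, so it suffices to identify the a.s.\ limits of $|L|/n$ and $|K|/n$. Being a leaf is a subtree property. For the exterior major nodes I would replace "being an exterior major node'' by the subtree property $\mathcal{P}$: ``$v$ has at least two children and at least one child of $v$ roots a \emph{bare path} (a subtree in which every vertex has at most one child, i.e.\ a line-graph ending in a leaf)''. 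A non-root vertex with $\ge 2$ children has degree $\ge 3$, and a bare-path child is exactly a downward leg to a leaf, so every non-root $\mathcal{P}$-vertex is a genuine exterior major node; the only exterior major nodes missed are those whose unique leg points toward the root, which forces a bare path from the root down to $v$ and hence concerns only $o(n)$ vertices (the root itself is also negligible). Thus $|K|/n$ and the normalized $\mathcal{P}$-count share the same a.s.\ limit.

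By the CMJ embedding of Section \ref{s:methods} (see also \cite{holmgren2017fringe}), the limiting fringe tree $\mathcal{F}$ is generated as follows: the root is assigned an age $\tau\sim\mathrm{Exp}(m-1)$ (the Malthusian parameter being $\alpha=m-1$), it carries $m$ child-slots each ringing at an independent $\mathrm{Exp}(1)$ time, and each vertex born at time $s<\tau$ roots an independent copy of the same process run for the remaining time $\tau-s$. Fringe convergence then gives $|L|/n\xrightarrow{a.s.}\P(\mathcal{F}\text{ is a single vertex})$ and $(\#\mathcal{P}\text{-vertices})/n\xrightarrow{a.s.}\P(\mathcal{F}\in\mathcal{P})$. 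The leaf probability is immediate: the root has no children iff all $m$ slots ring after $\tau$, so $\P(\text{leaf})=\E[e^{-m\tau}]=\tfrac{m-1}{2m-1}$.

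The core computation is $\P(\mathcal{F}\in\mathcal{P})$. First I would solve for $q(u):=\P(\text{a subtree run for time }u\text{ is a bare path})$ via the renewal equation $q(u)=e^{-mu}+m\int_0^u e^{-s}e^{-(m-1)u}\,q(u-s)\diff s$, which after the substitution $Q(u)=e^{mu}q(u)$ becomes the linear ODE $Q'(u)=m\,e^{-(m-1)u}Q(u)$ with $Q(0)=1$, giving $q(u)=\exp\!\big(-mu+\tfrac{m}{m-1}(1-e^{-(m-1)u})\big)$. Conditional on $\tau$, each of the $m$ root-slots is independently empty (probability $p_0=e^{-\tau}$), a bare-path child (probability $p_1=\int_0^\tau e^{-s}q(\tau-s)\diff s$), or a branching child (probability $p_2=1-p_0-p_1$), so the slot-types are multinomial$(m;p_0,p_1,p_2)$. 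Writing $B,C$ for the numbers of bare-path and branching children, the event $\mathcal{P}=\{B\ge1,\ B+C\ge2\}$ has conditional probability $1-(1-p_1)^m-m\,p_1\,p_0^{\,m-1}$, which I would integrate against the density $(m-1)e^{-(m-1)\tau}$ of $\tau$.

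Finally, using the closed form $p_1=\tfrac{e^{-\tau}}{m}\big(e^{\frac{m}{m-1}(1-e^{-(m-1)\tau})}-1\big)$, the multinomial expansion produces terms $p_1^{\,i}p_0^{\,j}$ carrying trinomial coefficients $\binom{m}{i,j}$; the $-1$ inside $p_1$ supplies the signs $(-1)^i$ and the elementary first sum, while the factor $e^{\frac{im}{m-1}(1-e^{-(m-1)\tau})}$ together with the $e^{-(i+j)\tau}$ prefactor yields, under the substitution $x=\tfrac{im}{m-1}e^{-(m-1)\tau}$, exactly $\gamma\!\big(\tfrac{i+j}{m-1}+1,\tfrac{im}{m-1}\big)$ with coefficient $A_{i,j}$; the subtracted term $m\,p_1\,p_0^{\,m-1}$ lands at $(i,j)=(1,m-1)$ and explains the exceptional coefficient there. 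Specialising to $m=2$ and simplifying the elementary and gamma expressions then gives $\tfrac{3e^4-48e^2+233}{384}$. I expect the main obstacle to be this last step: the bookkeeping of the expansion, the clean separation into the elementary and incomplete-gamma sums and the exceptional coefficient $A_{1,m-1}$, together with the rigorous justification that $\mathcal{P}$ deviates from the true exterior-major count by only $o(n)$ and that fringe convergence holds in the almost-sure (not merely in-probability) sense for this model.
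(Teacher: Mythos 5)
Your proposal is correct, and its overall architecture coincides with the paper's: Slater's formula, a subtree-property proxy for exterior major nodes (your $\mathcal{P}$ is exactly the paper's $\mathcal{P}_K$, and your $o(n)$-discrepancy argument is a looser version of the deterministic bound $|n_{\mathcal{P}_K}(T)-|K(T)||\le 1$ of Lemma \ref{lem:PE}), almost-sure fringe convergence for the CMJ embedding (Theorem \ref{thm:CMJ_helper}) with the $\mathrm{Exp}(m-1)$ doomsday age, and the same trinomial-to-incomplete-gamma endgame including the merged $(1,m-1)$ term. Where you genuinely diverge is in the two key computations. First, the line-graph probability: the paper introduces the ``exponential clock with Poisson increasing rate'' $H_{\lambda,\nu}$ (Definition \ref{def:exp_inc}), identifies $\tau_2-\tau_{v_0}$ with $H_{\rho,\rho+\chi}$ via a racing-clocks argument (Lemma \ref{lem:exp_inc}) and computes its tail by conditioning on the Poisson point count (Lemma \ref{lem:general_form}); you instead get $q(u)=\exp\bigl(-mu+\tfrac{m}{m-1}(1-e^{-(m-1)u})\bigr)$ from a renewal equation reduced to the ODE $Q'(u)=m e^{-(m-1)u}Q(u)$ --- a shorter, more elementary route to the same function (your kernel is right, since $m e^{-ms}e^{-(m-1)(u-s)}=m e^{-s}e^{-(m-1)u}$). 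Second, your ``$m$ independent $\mathrm{Exp}(1)$ slots with multinomial $(p_0,p_1,p_2)$ classification'' replaces the paper's general machinery (Lemmas \ref{lem:PFmidX}, \ref{lem:can_use} and \ref{lem:generating}); this is legitimate because for $\chi=-1$ the birth-time gaps $\mathrm{Exp}(m),\mathrm{Exp}(m-1),\dots$ are exactly the order statistics of $m$ i.i.d.\ $\mathrm{Exp}(1)$ variables, and indeed your $1-(1-p_1)^m-mp_1p_0^{m-1}$ matches the paper's $1-G_{\kappa\mid\mathcal{E}_x}(1-q)-q\P(\kappa=1\mid\mathcal{E}_x)$ via $p_1=q(1-e^{-x})$. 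The trade-off: your argument is more self-contained but tied to $\chi=-1$, whereas the paper's $H_{\lambda,\nu}$ computation is done once and reused verbatim for random recursive and rich-get-richer trees (Theorems \ref{thm:RRT} and \ref{thm:LPAT}). Two small imprecisions to fix in the write-up: the trinomial expansion produces monomials $\mu^i\nu^j$ with $\mu-\nu=p_1$ and $\nu=p_0/m$, not literally $p_1^i p_0^j$; and the subtracted term $mp_1p_0^{m-1}=e^{-m\tau}e^{\frac{m}{m-1}(1-e^{-(m-1)\tau})}-e^{-m\tau}$ contributes not only at $(i,j)=(1,m-1)$ but also a $-e^{-m\tau}$ piece, and it is precisely this piece that cancels the leaf probability $\E[e^{-m\tau}]=\tfrac{m-1}{2m-1}$ (the paper's $I_3$ cancellation). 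With that bookkeeping your outline reproduces \eqref{eq:BST} and the $m=2$ constant exactly.
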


We provide two proofs to this theorem for $m=2$ below in Sections \ref{s:proof}: a combinatorial proof and a probabilistic proof. The probabilistic proof is more robust, and we are able to generalise that proof for $m>2$ and other types of attachments rules.

\subsection*{Random recursive tree}
\label{RRT}

As mentioned in the introduction, a random recursive tree is constructed by attaching each new node uniformly randomly to one of the existing nodes. It is also a special case of a general linear preferential attachment tree with parameters $\rho=1, \chi=0$.

\begin{theorem}[MD of random recursive trees]
\label{thm:RRT}
Let $\mathcal{T}_n^{(1,0)}$ be a sequence of random recursive trees with $n$ nodes. Then, 
\begin{equation}
\frac{\beta(\mathcal{T}_n^{(1,0)})}{n} \xrightarrow{a.s.} e\left(\int\limits_1^e x^{-1}e^{-x} \diff x+\gamma(2,1)\right)-1 \approx 0.263709059.
\end{equation}
\end{theorem}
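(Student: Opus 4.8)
The plan is to reduce the metric dimension to a sum of two subtree-property counts — leaves and exterior major nodes — and then apply the fringe-tree convergence theory to each count. By Theorem~\ref{thm_slater}, $\beta(\mathcal{T}_n^{(1,0)}) = |L(\mathcal{T}_n^{(1,0)})| - |K(\mathcal{T}_n^{(1,0)})|$ whenever the tree is not a path (which holds asymptotically almost surely for random recursive trees of growing size). So it suffices to determine the almost-sure limits of $|L|/n$ and $|K|/n$ separately. First I would recall the fringe-tree convergence result for random recursive trees: the empirical distribution of the subtree hanging below a uniformly chosen vertex converges almost surely to an explicit limiting random fringe tree, so for \emph{any} subtree property $\mathcal{P}$ the proportion of vertices whose subtree satisfies $\mathcal{P}$ converges almost surely to the limiting probability that the fringe tree has property $\mathcal{P}$. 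Being a leaf is manifestly a subtree property, so $|L|/n \to \P(\text{fringe tree is a single vertex})$, a quantity readily computed from the known fringe distribution.

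The main conceptual step is handling the exterior major nodes, since being an exterior major node is \emph{not} a subtree property: it depends on the degree of $v$ (which includes the edge toward the root) and on having a pendant line-graph to a leaf, and the parent edge cannot be read off from the subtree below $v$. The approach, as flagged in the Methodology section, is to find a subtree property that serves as a \emph{proxy}. Concretely, I would classify each vertex $v$ by the subtree rooted at $v$ (pointing away from the root): a vertex whose downward subtree contains at least one pendant path to a leaf, and which has enough downward degree to be major, will be an exterior major node \emph{provided} the parent edge does not change this classification. The correction terms come precisely from vertices near the degree-$3$ threshold, where whether the parent edge pushes the degree to $\ge 3$ matters; these boundary cases must be accounted for exactly, and this bookkeeping is where the two extra additive terms in the limit arise. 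I would set up an auxiliary subtree property $\mathcal{P}'$ counting vertices that are ``exterior major as seen from below,'' show its count differs from $|K|$ by lower-order or explicitly computable boundary contributions, and apply fringe convergence to $\mathcal{P}'$.

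The explicit evaluation then reduces to computing, under the limiting fringe distribution for the random recursive tree, the probabilities associated with $\mathcal{P}'$ and with the boundary corrections. For random recursive trees the limiting fringe tree has a clean description via the Crump--Mode--Jagers embedding (a Yule process), and the relevant generating-function or integral identities produce the lower incomplete gamma function $\gamma(2,1)$ and the integral $\int_1^e x^{-1} e^{-x}\diff x$ appearing in the statement, with the prefactor $e$ coming from the Malthusian normalization of the Yule embedding. I expect the genuine obstacle to be \textbf{not} the final integral computation but rather the careful argument that the non-subtree property (being an exterior major node) is captured up to explicitly computable corrections by an honest subtree property; once that reduction is rigorous, the rest is an application of the cited fringe-convergence theorems together with a deterministic integral evaluation.
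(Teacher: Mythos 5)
Your overall architecture coincides with the paper's: Slater's formula (Theorem \ref{thm_slater}), a proxy subtree property for exterior major nodes, and fringe-tree convergence via the Crump-Mode-Jagers embedding (Theorem \ref{thm:CMJ_helper}). But there are two genuine problems. First, you misdiagnose where the two additive terms in the limiting constant come from. The paper's Lemma \ref{lem:PE} shows that the proxy count $n_{\mathcal{P}_K}(T)$ differs from $|K(T)|$ by at most $1$ \emph{deterministically}: the boundary cases you describe (a root of degree one or two with a pendant line) contribute a single $\pm 1$ in the whole tree, which vanishes after dividing by $n$. So there is no ``explicitly computable boundary contribution'' feeding into the limit; both terms $e\gamma(2,1)$ and $e\int_1^e x^{-1}e^{-x}\diff x$ arise from evaluating $\P(\mathcal{F}\in\mathcal{P}_L)-\P(\mathcal{F}\in\mathcal{P}_K)$ for the limiting fringe tree. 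Relatedly, you locate the ``genuine obstacle'' in the proxy reduction, but in the paper that reduction is a short deterministic argument; the real work is elsewhere.

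Second, and this is the substantive gap: you never indicate how to compute the probability $q$ that a child's subtree of the fringe root is a line-graph, which is the crux of the entire computation. In the paper this requires (i) conditioning on the independent doomsday time $\tau\sim\mathrm{Exp}(\rho+\chi)$ and showing that, given $\tau=x$ and root degree $\kappa=k$, the children's birth times are i.i.d.\ with an explicit density (Lemma \ref{lem:can_use}), which is what legitimizes the i.i.d.\ Bernoulli line-indicators in Lemma \ref{lem:PFmidX}; (ii) the racing-clocks argument (Lemma \ref{lem:exp_inc}) identifying the first branching time of the would-be line as $\tau_{v_0}+H_{\rho,\rho+\chi}$, where $H_{\lambda,\nu}$ is an exponential variable with Poisson-increasing rate; and (iii) the explicit tail $\P(H_{\lambda,\nu}>t)=\exp\bigl(-\lambda t+\frac{\lambda}{\nu}(1-e^{-\nu t})\bigr)$ (Lemma \ref{lem:general_form}). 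Only after convolving with the birth-time density (giving $q=(e^{1-e^{-x}}-1)/x$ for $\rho=1$, $\chi=0$), substituting into $1-G_{\kappa\mid\mathcal{E}_x}(1-q)-q\P(\kappa=1\mid\mathcal{E}_x)$ with $\kappa\mid\mathcal{E}_x\sim\mathrm{Poi}(x)$, and integrating against the doomsday density $e^{-x}\diff x$ do the stated terms emerge, via the substitutions $u=e^{-x}$ and $v=e^{1-u}$; the prefactor $e$ is an artifact of this last substitution, not a ``Malthusian normalization.'' Your appeal to ``generating-function or integral identities'' hides all of this, so as written the proposal could not be completed to the stated constant without supplying these missing lemmas.
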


\subsection*{Rich-get-richer trees}

Theorems \ref{thm:BST} and \ref{thm:RRT} covered general linear preferential attachment trees with $\chi \in \{-1, 0\}$. In the next theorem it suffices to state the result with $\chi=1$. These trees are often called rich-get-richer trees, as new nodes are more likely to attach to nodes with higher degrees. 

\begin{theorem}[MD of rich-get-richer trees]
\label{thm:LPAT}
Let $\mathcal{T}_n^{(\rho,1)}$ be a sequence of linear preferential attachment trees with $n$ nodes and $\chi=1$, $\rho>0$. Then, 
\begin{equation}\label{eq:thm_LPAT}
\begin{aligned}
\frac{\beta(\mathcal{T}_n^{(\rho,1)})}{n} &\xrightarrow{a.s. }-1+\int\limits_0^\infty (\rho+1)e^{-x(\rho+1)}\left(1+\frac{e^{ x+\frac{\rho}{\rho+1}(1-e^{-(\rho+1)x})} - e^{ x}}{\rho} \right)^{-\rho} \diff x \\
&\ \ \  \qquad + \int\limits_0^\infty (\rho+1)e^{-x(\rho+1)} e^{-\rho x+\frac{\rho}{\rho+1}(1-e^{-(\rho+1)x})} \diff x .
\end{aligned}
\end{equation}
\end{theorem}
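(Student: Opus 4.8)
The plan is to reduce the computation of $\beta(\mathcal{T}_n^{(\rho,1)})$ to counting two families of vertices by their fringe-tree types, exactly as suggested by Theorem~\ref{thm_slater}. Since $\beta(T) = |L(T)| - |K(T)|$ for any non-path tree, and rich-get-richer trees are non-paths with probability tending to $1$, I would write
\[
\frac{\beta(\mathcal{T}_n^{(\rho,1)})}{n} = \frac{|L(\mathcal{T}_n^{(\rho,1)})|}{n} - \frac{|K(\mathcal{T}_n^{(\rho,1)})|}{n}.
\]
The first term is governed by the \emph{leaf} subtree-property, which is genuinely a subtree property, so I expect to read off its almost-sure limit directly from the limiting fringe distribution. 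The second term, the proportion of exterior major nodes, is the subtle one: being an exterior major node depends on having degree at least $3$ \emph{and} having at least one line-graph path down to a leaf, and the latter is a subtree property while the degree condition mixes in the edge to the parent. As flagged in the Methodology paragraph, I would replace $K$ by a genuine subtree-property proxy $\widetilde K$ (e.g.\ vertices whose fringe tree has at least two children and at least one pendant path to a leaf through degree-two vertices, with an appropriate correction for the parent edge) and argue that $|K \triangle \widetilde K| = o(n)$ almost surely, so the limit is unchanged.

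Next I would bring in the continuous-time embedding from Section~\ref{s:methods}: the discrete linear preferential attachment tree with $\chi=1$, $\rho>0$ is realized as a Crump--Mode--Jagers branching process observed at the stopping time when it has $n$ vertices, and the Malthusian parameter for this process is finite. The standard fringe-tree convergence theorem then tells me that the empirical distribution of fringe trees converges almost surely to the distribution of a single random fringe tree $\mathcal{F}$ built from the CMJ dynamics. For the $\chi=1$ case the relevant one-dimensional marginal is the limiting \emph{birth time} of a uniformly chosen vertex, which has an explicit law; I expect the factor $(\rho+1)e^{-x(\rho+1)}$ appearing in \eqref{eq:thm_LPAT} to be exactly this density (up to the Malthusian normalization), with $x$ playing the role of the rescaled age of the chosen vertex.

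With the embedding in hand, I would compute the two limiting probabilities as integrals against this birth-time density. The probability that a vertex born at rescaled age $x$ is a leaf (has acquired no children by the observation time) gives the second integral in \eqref{eq:thm_LPAT}, where the term $e^{-\rho x + \frac{\rho}{\rho+1}(1 - e^{-(\rho+1)x})}$ should emerge as the probability that a vertex of initial weight $\rho$ attracts no offspring over its remaining lifetime, the inner exponential encoding the integrated attachment rate. The proxy-$\widetilde K$ computation is more involved: conditionally on the age $x$, I would compute the probability that the fringe tree at the chosen vertex has the required branching-plus-pendant-path structure, which produces the $\left(1 + \frac{e^{x + \frac{\rho}{\rho+1}(1-e^{-(\rho+1)x})} - e^{x}}{\rho}\right)^{-\rho}$ factor in the first integral via a product over the (Poisson-in-the-limit) offspring of the vertex. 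The main obstacle will be precisely this exterior-major-node count: verifying that the subtree proxy differs from the true $K$ on only $o(n)$ vertices, and carrying out the conditional offspring computation so that the somewhat forbidding closed form in \eqref{eq:thm_LPAT} drops out. The leaf count and the birth-time density, by contrast, I expect to be routine once the CMJ machinery of Section~\ref{s:methods} is invoked.
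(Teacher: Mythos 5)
Your skeleton matches the paper's: Slater's formula, a subtree-property proxy for exterior major nodes, the CMJ embedding, almost-sure fringe convergence (Theorem \ref{thm:CMJ_helper}), and integration against the density $(\rho+1)\e^{-(\rho+1)x}$, which is indeed simultaneously the Malthusian-rate ``doomsday clock'' and the limiting age of a uniform vertex. Two remarks on the proxy step: the paper does not need an $|K\triangle \widetilde K|=o(n)$ stochastic argument --- with the right choice of proxy (root of the fringe has \emph{at least two children} and at least one child subtree is a line-graph), Lemma \ref{lem:PE} shows the discrepancy with $|K(T)|$ is at most $1$ \emph{deterministically}, so this obstacle dissolves; but note the proxy demands two \emph{children}, not ``degree $\ge 3$'', precisely to compensate for the missing parent edge.

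The genuine gap is that you have mapped the pieces of \eqref{eq:thm_LPAT} to the wrong probabilistic events, and the computations that actually produce them are absent from your plan. The leaf count does \emph{not} give the second integral: a vertex of age $x$ is a leaf with probability $\e^{-\rho x}$ (the first-birth clock is $\mathrm{Exp}(\rho)$), so $\P(\mathcal F\in\mathcal P_L)=\int_0^\infty(\rho+1)\e^{-(\rho+1)x}\e^{-\rho x}\diff x=\frac{\rho+1}{2\rho+1}$, a constant that cancels against an identical term arising inside the $\mathcal P_K$ computation; the $-1$ in \eqref{eq:thm_LPAT} is what survives. The doubly-exponential factor $\e^{-\rho x+\frac{\rho}{\rho+1}(1-\e^{-(\rho+1)x})}$ in the second integral is the tail $\P(H_{\rho,\rho+1}>x)$ of the time until a pendant chain first branches (Lemmas \ref{lem:exp_inc} and \ref{lem:general_form}), and this integral enters as the correction term $q\,\P(\kappa=1\mid\mathcal E_x)$ of Lemma \ref{lem:PFmidX} --- the root having exactly one child whose subtree is a line, which must be excluded from $\mathcal P_K$. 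Second, the conditional offspring law of the root at age $x$ for $\chi=1$ is \emph{negative binomial}, not Poisson (its pgf $(\e^{x}+(1-\e^{x})z)^{-\rho}$ is exactly where the power $-\rho$ in the first integral comes from); a Poisson-product heuristic would yield an exponential of $q$, not this form. Finally, to evaluate that pgf at $1-q$ one needs two ingredients your sketch does not supply: (i) conditioned on $\mathcal E_x$ and $\kappa=k$, the children's birth times are i.i.d.\ with density proportional to $\e^{y}$ on $[0,x]$ (Lemma \ref{lem:can_use}), which is what makes the line indicators conditionally i.i.d.\ Bernoulli$(q)$; and (ii) $q$ itself is the convolution $\P(H_{\rho,\rho+1}+\tau_{v_0}>x)$, computed in \eqref{eq:q_general}. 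Without (i) and (ii), and with the leaf/line misattribution corrected, the ``somewhat forbidding closed form'' cannot drop out of your plan as stated.
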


The $\rho=\chi=1$ case corresponds to the positive linear preferential attachment tree, introduced by \cite{barabasi1999emergence}. For positive linear preferential attachment trees we can use Theorem \ref{thm:LPAT} and a numerical integration software \cite{WMathematica_Online} to obtain the following result result.

\begin{figure}
\small{
\begin{tabular}{|c||c|c|c|c|c|c|c|c|c|} \hline
$\chi/\rho$ & -1/2     & -1/3     & -1/4     & -1/5     & 0        & 1/2      & 1        & 2        & 10       \\ \hline
${\beta(\mathcal{T}_n^{(\rho,1)})}/{n}$          & 0.10969 & 0.15812 & 0.18377 & 0.19953 & 0.26371 & 0.40304 & 0.50120 & 0.62535 & 0.87501\\\hline
\end{tabular}}   \captionsetup{width=\linewidth}

  \caption{The table shows numerical values of the MD of general linear preferential attachment trees for some parameters $\chi/\rho$. The parameter values $-1/2, 0$ and $1$ correspond to the binary search tree, the random recursive tree, and the positive linear preferential attachment tree respectively.}
  \label{fig:table}
\end{figure}

\begin{corollary}
Let $\mathcal{T}_n^{(1,1)}$ be a sequence of positive linear preferential attachment trees with $n$ nodes. Then,
$$ \frac{\beta(\mathcal{T}^{(1,1)}_n)}{n} \xrightarrow{a.s. } c_{(1,1)} \approx -1+ 0.679824 +0.821372=0.501196.$$
\end{corollary}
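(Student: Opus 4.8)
The plan is to read off this corollary as the $\rho=\chi=1$ specialization of Theorem~\ref{thm:LPAT}, since the positive linear preferential attachment tree is by definition the case $\rho=1$, $\chi=1$. Thus the only work is to substitute $\rho=1$ (so $\rho+1=2$) into the two integrals of \eqref{eq:thm_LPAT} and evaluate them.

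For the first integral the prefactor $(\rho+1)e^{-x(\rho+1)}$ becomes $2e^{-2x}$, the denominator $\rho$ inside the bracket becomes $1$, and the outer exponent $-\rho$ becomes $-1$, so the integrand reduces to
$$
2e^{-2x}\left(1+e^{x+\frac12\left(1-e^{-2x}\right)}-e^{x}\right)^{-1}.
$$
For the second integral the integrand collapses to $2e^{-2x}e^{-x+\frac12(1-e^{-2x})}=2e^{-3x+\frac12(1-e^{-2x})}$. I would then evaluate both. The second integral I would handle exactly: the substitution $u=e^{-2x}$ turns it into $2\sqrt{2}\,e^{1/2}\,\gamma\!\left(\tfrac32,\tfrac12\right)$, with $\gamma$ the lower incomplete gamma function of \eqref{eq:gammadef}; this both provides a closed form and serves as an independent check of the numerical value $\approx 0.821372$. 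The first integral has no elementary antiderivative, so I would compute it numerically via \cite{WMathematica_Online}, obtaining $\approx 0.679824$.

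Adding $-1$ to the two integral values gives $c_{(1,1)}\approx -1+0.679824+0.821372=0.501196$, as claimed. There is no genuine mathematical obstacle here, as all the probabilistic content is already contained in Theorem~\ref{thm:LPAT}; the only care required is in the numerical integration of the first integral, for which I would verify convergence and accuracy by comparing quadrature schemes, and in confirming that the simplified integrands above agree with \eqref{eq:thm_LPAT} at $\rho=1$.
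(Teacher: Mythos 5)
Your proposal is correct and is essentially the paper's own route: the paper likewise obtains the corollary by substituting $\rho=\chi=1$ into Theorem~\ref{thm:LPAT} and evaluating the resulting integrals with numerical integration software \cite{WMathematica_Online}. Your closed form $2\sqrt{2}\,e^{1/2}\,\gamma\!\left(\tfrac{3}{2},\tfrac{1}{2}\right)\approx 0.82137$ for the second integral is a correct small extra that the paper does not record, and it independently confirms the stated numerical value.
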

 
\section{Method and discussion}\label{s:methods}
In this section we introduce fringe-trees and general results on their convergence, we explain the embedding of trees growing in discrete times into Crump-Mode-Jagers branching processes, and  relate the metric dimension to subtree properties.
\subsection{Fringe trees}

For the rest of the paper, all trees $T$ are considered to be rooted, which simply means that they have a special vertex denoted by $\mathrm{root}(T)$. In rooted trees, every vertex $v \in T \setminus \{ \mathrm{root}(T)\}$ has a parent, which is the first vertex on the path from $v$ to  $\mathrm{root}(T)$. For any vertex $v \in T$, let $T_v$ be the subtree of $T$ rooted at $v$, that is the connected subtree of $T$ that contains $v$ after removing the parent of $v$ (as a special case $T_{\mathrm{root}(T)}=T$). If we sample $v$ uniformly at random from $T$, we say that the random tree $T_v$ is a \textit{random fringe tree} of $T$. When $T$ is a deterministic tree, this definition is quite straightforward. However, we are interested in the case when $T$ itself is random, and in this case defining random fringe trees requires more care.

\begin{definition}
For rooted trees $S$ and $T$ let $n_S(T)$ be the cardinality of $\{ v \mid T_v = S\}$ and for a rooted tree property $\mathcal{P}$, let $n_\mathcal{P}(T)$ be the cardinality of $\{ v \mid T_v \in \mathcal{P}\}$.
\end{definition}

When $T$ is deterministic $n_S(T)/|T|$ defines the random fringe tree distribution. When $\mathcal{T}$ is random, we can think of the sampling of $\mathcal{T}$ and $v$ as a combined random event, which again gives rise to a distribution over trees. This is called the annealed fringe tree distribution. In this paper, we are interested in the \emph{quenched fringe tree distribution}. In the quenched version, we think of $n_S(\mathcal{T})/|\mathcal{T}|$ as a distribution that is itself random. Since we are interested in the convergence of fringe tree distributions as the size of the trees tend to infinity, we are going to focus on the convergence of the random variables $n_S(\mathcal{T}_n)/|\mathcal{T}_n|$ (almost surely (a.s.) or in probablity (p)).

We also defined the seemingly more general notion of $n_\mathcal{P}(T)$, however, in our applications whenever we can say something about the convergence of $n_S(\mathcal{T}_n)/|\mathcal{T}_n|$, we have a similar result for $n_\mathcal{P}(\mathcal{T}_n)/|\mathcal{T}_n|$. In fact, since working with subtree properties will be very convenient for computing the MD (see Lemma \ref{lem:PE}), we only state the results from the fringe tree literature on $n_\mathcal{P}(T)$.

\begin{theorem}[\cite{aldous1991asymptotic}, Theorem 1.2 of \cite{janson2013asymptotic}]
\label{thm:GW_helper}
Let $\mathcal{GW}_n$ be a sequence of Galton-Watson trees conditioned to have $n$ vertices, with offspring distribution $\xi$, where $\E[\xi]=1$ and $\E[\xi^2]<\infty$. Let $\mathcal{F}$ be the \emph{unconditioned} Galton-Watson tree with the same offspring distribution. Then, for every subtree property $\mathcal{P}$,
\begin{equation}
\label{eq:GW_helper}
\frac{n_\mathcal{P}(\mathcal{GW}_n)}{n} \xrightarrow{p} \P(\mathcal{F} \in \mathcal{P}).
\end{equation}
\end{theorem}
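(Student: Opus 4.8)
The statement to prove is Theorem~\ref{thm:GW_helper}, the fringe tree convergence result for conditioned Galton-Watson trees: for every subtree property $\mathcal{P}$, $n_\mathcal{P}(\mathcal{GW}_n)/n \xrightarrow{p} \P(\mathcal{F} \in \mathcal{P})$, where $\mathcal{F}$ is the unconditioned Galton-Watson tree.

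The plan is to prove this using the method of moments, showing that the first and second moments of $n_\mathcal{P}(\mathcal{GW}_n)/n$ converge appropriately, which gives convergence in probability via Chebyshev's inequality. The key object is, for a fixed rooted tree $S$ of size $k$, the count $n_S(\mathcal{GW}_n)$ of vertices $v$ whose subtree $(\mathcal{GW}_n)_v$ equals $S$.

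First I would establish the first moment. By linearity of expectation, $\E[n_S(\mathcal{GW}_n)] = \sum_v \P((\mathcal{GW}_n)_v = S)$. The central computation is the probability that the fringe tree at a uniformly chosen vertex equals $S$. Using the standard fact that a conditioned Galton-Watson tree has the distribution $\P(\mathcal{GW}_n = T) \propto \prod_{u \in T} p_{\deg^+(u)}$ over trees of size $n$ (where $\deg^+$ is the out-degree), one can compute the expected number of copies of $S$ as a fringe subtree by a combinatorial decomposition: a copy of $S$ hanging at $v$ corresponds to the tree obtained by gluing $S$ at $v$ onto a ``remaining'' tree of size $n-k+1$ that itself is a conditioned Galton-Watson tree with a marked leaf. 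One shows that $\E[n_S(\mathcal{GW}_n)]/n \to \P(\mathcal{F} = S)$, where $\P(\mathcal{F} = S) = \prod_{u \in S} p_{\deg^+(u)}$ is exactly the probability that the unconditioned tree equals $S$. This uses the local limit theorem for the total progeny of the Galton-Watson tree (finite variance, $\E[\xi^2]<\infty$, guarantees $\P(|\mathcal{F}|=n) \sim C n^{-3/2}$), so that the ratio of partition-function-type normalizing constants for sizes $n-k+1$ and $n$ tends to $1$.

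Next I would bound the second moment to show concentration, i.e.\ $\Var(n_S(\mathcal{GW}_n)/n) \to 0$. Here $\E[n_S(\mathcal{GW}_n)^2] = \sum_{v,w} \P((\mathcal{GW}_n)_v = S, (\mathcal{GW}_n)_w = S)$, and the pairs split into those where the subtrees at $v$ and $w$ are disjoint and those where one contains the other (the latter contributing $O(n)$ and hence negligible after dividing by $n^2$). For disjoint pairs one performs an analogous combinatorial gluing argument, now attaching two copies of $S$, and shows the joint probability factorizes asymptotically as $\P(\mathcal{F}=S)^2$, so that $\E[(n_S/n)^2] \to \P(\mathcal{F}=S)^2$. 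Combined with the first moment, this gives $\Var(n_S/n) \to 0$, hence $n_S(\mathcal{GW}_n)/n \xrightarrow{p} \P(\mathcal{F}=S)$. To pass from a single fixed tree $S$ to a general subtree property $\mathcal{P}$, I would write $\P(\mathcal{F}\in\mathcal{P}) = \sum_{S\in\mathcal{P}} \P(\mathcal{F}=S)$ and truncate: for $\eps>0$ choose a finite set of trees $S$ capturing all but $\eps$ of the mass $\P(\mathcal{F}\in\mathcal{P})$, apply the single-tree result to each, and control the tail by a uniform bound $\sum_S \E[n_S(\mathcal{GW}_n)]/n \le 1$ together with the tightness of the fringe distribution.

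The main obstacle I expect is the asymptotic factorization in the gluing arguments, i.e.\ controlling the ratios of the Galton-Watson partition functions $Z_n := \sum_{|T|=n}\prod_u p_{\deg^+(u)} = \P(|\mathcal{F}|=n)$ as the sizes are perturbed by the constant $k=|S|$. Making $Z_{n-k+1}/Z_n \to 1$ rigorous requires the local limit theorem for the hitting time / total progeny of a critical finite-variance random walk, and in the second-moment estimate one must carefully handle the sum over the relative positions of the two attachment points to confirm the leading-order term matches $n^2 \P(\mathcal{F}=S)^2$ while nested and overlapping configurations are lower order. This is precisely the content of Aldous's original argument and Janson's Theorem~1.2, so in practice I would invoke the local limit theorem as a black box and focus the effort on the combinatorial bookkeeping of the gluing decompositions.
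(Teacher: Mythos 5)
The paper offers no proof of Theorem~\ref{thm:GW_helper}: it is imported as a black box from \cite{aldous1991asymptotic} and Theorem~1.2 of \cite{janson2013asymptotic}, so the only fair comparison is with the proofs in those references, and your sketch is in outline a reconstruction of exactly that argument: annealed first moment via the cut-and-glue bijection (replace the fringe copy of $S$ by a marked leaf, use the product weights $\prod_u p_{\deg^+(u)}$ together with the Otter--Dwass/Kemperman formula and the local limit theorem to get $\P(|\mathcal{F}|=n-k+1)/\P(|\mathcal{F}|=n)\to 1$), Chebyshev via a second-moment factorization for disjoint pairs, and truncation over finitely many trees $S$ to pass to a general property $\mathcal{P}$. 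Three points are worth tightening. First, the lattice issue: if the span $h$ of $\xi-1$ exceeds $1$, then $|\mathcal{F}|\equiv 1 \pmod h$, so the LLT ratio argument must run along the admissible subsequence of $n$; this is consistent because any $S$ with $\P(\mathcal{F}=S)>0$ also satisfies $|S|\equiv 1 \pmod h$, hence $n-k+1$ stays on the lattice, but it should be stated. Second, your handling of nested pairs is more cautious than necessary: for a \emph{fixed} tree $S$ of size $k$, the events $(\mathcal{GW}_n)_v=S$ and $(\mathcal{GW}_n)_w=S$ with $w$ a strict descendant of $v$ are incompatible, since a strict fringe subtree of $S$ has size $<k$; so the non-disjoint contribution is exactly the diagonal $v=w$, of order $\E[n_S(\mathcal{GW}_n)]=O(n)$, and no separate estimate is needed. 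Third, the truncation step silently uses criticality: since $\E[\xi]=1$ implies $\P(|\mathcal{F}|<\infty)=1$, one has $\sum_S \P(\mathcal{F}=S)=1$, and because $\sum_S n_S(\mathcal{GW}_n)=n$ exactly, choosing a finite family $F$ with $\sum_{S\in F}\P(\mathcal{F}=S)\ge 1-\eps$ gives $\sum_{S\notin F} n_S(\mathcal{GW}_n)/n = 1-\sum_{S\in F} n_S(\mathcal{GW}_n)/n \le \eps+o_p(1)$, which is what closes the argument; this is the one place the hypotheses enter beyond the LLT (for which $\E[\xi^2]<\infty$ is used, as you note). With these caveats your outline is sound, but it is, as you anticipate, the Aldous--Janson proof rather than an alternative route, and the paper itself simply cites it.
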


The previous theorem applied to any Galton-Watson tree with $\E[\xi]=1$ and $\E[\xi^2]<\infty$. The next theorem only applies to a single family of growing trees, the binary search tree. We will use it to give a combinatorial proof of the LLN of the MD of binary search trees (second part of Theorem \ref{thm:BST}).

\begin{theorem}[\cite{aldous1991asymptotic}]
\label{thm:BST_helper}
Let $(\mathcal{T}_n^{(2,-1)})_{n\ge 1}$ be a growing sequence of binary search trees of size $n$. Then, for every subtree property $\mathcal{P}$,
\begin{equation}
\label{eq:BST_helper}
\frac{n_\mathcal{P}(\mathcal{T}_n^{(2,-1)})}{n} \xrightarrow{p} \sum\limits_{k=1}^\infty \frac{2}{(k+1)(k+2)}\P(\mathcal{T}_k^{(2,-1)} \in \mathcal{P}).
\end{equation}
\end{theorem}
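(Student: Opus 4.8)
The plan is to exploit the recursive self-similarity of binary search trees and to establish \eqref{eq:BST_helper} by the first- and second-moment method: first I would show that the \emph{expectation} of the left-hand side converges to the right-hand side, and then that the \emph{variance} of $n_{\mathcal P}(\mathcal{T}_n^{(2,-1)})/n$ tends to $0$, so that Chebyshev's inequality upgrades the resulting $L^1$ convergence to convergence in probability.

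For the first moment I would use the standard decomposition of a size-$n$ binary search tree: the root splits the remaining $n-1$ keys into a left subtree of size $L$ and a right subtree of size $n-1-L$, where $L$ is uniform on $\{0,1,\dots,n-1\}$, and conditionally on their sizes the two subtrees are independent uniform binary search trees. Writing $g_S(n)=\E[n_S(\mathcal{T}_n^{(2,-1)})]$ for a fixed tree $S$ with $|S|=k$, the root contributes $\mathbf 1\{n=k\}\,\P(\mathcal{T}_k^{(2,-1)}=S)$ in expectation, while the two subtrees contribute $g_S(L)$ and $g_S(n-1-L)$; averaging over $L$ gives the linear recursion
\begin{equation}
g_S(n)=\P(\mathcal{T}_k^{(2,-1)}=S)\,\mathbf 1\{n=k\}+\frac{2}{n}\sum_{j=0}^{n-1} g_S(j).
\end{equation}
The same recursion with source term $\mathbf 1\{n=k\}$ is solved by $f(n,k):=\E[\#\{v: |T_v|=k\}]$, and by linearity $g_S(n)=\P(\mathcal{T}_k^{(2,-1)}=S)\,f(n,k)$. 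From $n\,f(n,k)=2\sum_{j<n}f(j,k)$ for $n>k$, subtracting consecutive terms yields $(n+1)f(n+1,k)=(n+2)f(n,k)$, whence, using $f(k,k)=1$ and $f(k+1,k)=2/(k+1)$,
\begin{equation}
f(n,k)=\frac{2(n+1)}{(k+1)(k+2)}\qquad (n>k).
\end{equation}
Summing $g_S(n)/n$ over all $S\in\mathcal P$ and using $f(n,k)/n\to 2/((k+1)(k+2))$ together with the summability $\sum_k 1/((k+1)(k+2))<\infty$ to justify dominated convergence, I obtain that $\E[n_{\mathcal P}(\mathcal{T}_n^{(2,-1)})]/n$ converges to the right-hand side of \eqref{eq:BST_helper}.

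For the concentration step I would treat $W_n:=n_{\mathcal P}(\mathcal{T}_n^{(2,-1)})$ as an additive tree functional with bounded toll: $W_n = W_L + W_{n-1-L} + b_n$, where $b_n=\mathbf 1\{\mathcal{T}_n^{(2,-1)}\in\mathcal P\}\in\{0,1\}$ and, given $L$, the two subtree contributions are independent. Conditioning on $L$ and using this conditional independence, the cross term factors through the already-known means, and I would derive a divide-and-conquer recursion for $\Var(W_n)$ whose inhomogeneous part is controlled by $|b_n|\le 1$ and by the $O(n)$ behaviour of $\E[W_n]$. Solving this recursion gives $\Var(W_n)=O(n)$, hence $\Var(W_n/n)=O(1/n)\to 0$, and Chebyshev's inequality completes the proof.

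The main obstacle is precisely this variance bound. The toll $b_n$ is a functional of the entire tree shape and is therefore correlated with the shapes of the two subtrees, so the second-moment recursion does not close as cleanly as the first-moment one; care is needed to verify that these correlations contribute only an $O(n)$ error and do not inflate the variance to order $n^2$. If one prefers to avoid the explicit variance recursion, the same conclusion follows from the general concentration theory for additive functionals of binary search trees with bounded toll functions (the same type of machinery that, for conditioned Galton--Watson trees, underlies Theorem \ref{thm:GW_helper}); either route establishes \eqref{eq:BST_helper}.
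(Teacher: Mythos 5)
Your proposal is correct in outline, but it is worth noting that the paper does not prove this statement at all: Theorem \ref{thm:BST_helper} is quoted from Aldous \cite{aldous1991asymptotic}, and where the authors need a robust argument (for $m>2$, $\chi\in\{0,1\}$, and to get a.s.\ convergence) they instead invoke the CMJ embedding and Nerman's law of large numbers via Theorem \ref{thm:CMJ_helper}. So you are supplying a genuinely different, self-contained route special to binary search trees. Your first-moment computation is sound: the recursion $g_S(n)=\P(\mathcal{T}_k^{(2,-1)}=S)\,\mathbf 1\{n=k\}+\tfrac2n\sum_{j<n}g_S(j)$ follows from the uniform split $L\sim\mathrm{Unif}\{0,\dots,n-1\}$ with conditionally independent subtrees, the telescoping $(n+1)f(n+1,k)=(n+2)f(n,k)$ is valid for $n\ge k+1$ with $f(k+1,k)=2/(k+1)$, giving $f(n,k)=2(n+1)/((k+1)(k+2))$, and the bound $f(n,k)/n\le 4/((k+1)(k+2))$ (together with the diagonal term $\P(\mathcal{T}_n^{(2,-1)}\in\mathcal P)/n\to 0$) justifies the passage to the limit; note also $\sum_k 2/((k+1)(k+2))=1$, confirming the limit is a probability distribution. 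You correctly identify the variance step as the crux: the toll $b(\mathcal{T}_n^{(2,-1)})=\mathbf 1\{\mathcal{T}_n^{(2,-1)}\in\mathcal P\}$ is a function of the whole tree, so the recursion does not close by conditional independence alone. It can nevertheless be closed along your lines: writing $V_n=\Var(W_n)$, conditioning on $L$, using $\E[W_j]=cj+O(1)$ (which follows from your formula for $f(n,k)$, since $\sum_{k\ge n}k^{-2}=O(1/n)$) to control $\Var(\E[W_L+W'_{n-1-L}\mid L])=O(1)$, and bounding the cross term by Cauchy--Schwarz, $\mathrm{Cov}(W_L+W'_{n-1-L},b)\le \tfrac12\sqrt{V_n\text{-order terms}}$, one gets $V_n\le \tfrac2n\sum_{j<n}V_j+O(\sqrt{V_n^{\ast}})+O(1)$; a bootstrap (start from the trivial $V_n\le n^2$, transfer tolls $t_k$ through the recursion via $\sum_k t_k/k^2$) then yields $V_n=O(n\log n)$ and finally $V_n=O(n)$. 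In fact, for Chebyshev you only need $V_n=o(n^2)$, so even the crude first bootstrap step suffices, and alternatively the general machinery of \cite{holmgren2015limit} (or \cite{holmgren2017fringe}) gives the variance bound directly, as you suggest. In short: your elementary two-moment argument buys a short, self-contained proof of convergence in probability for this one family, at the cost of the generality and the almost-sure mode of convergence that the branching-process approach of Theorem \ref{thm:CMJ_helper} provides.
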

In words, this theorem says that the fringe-tree distribution of a random binary search tree is again a random binary search tree with a \emph{random size}: the probability that the size of the fringe-tree is $k$ is ${2}/{((k+1)(k+2))}$.
A similar statement can be made for random recursive trees, however, we do not include this statement as it will not be used in our proofs. Instead we introduce a more powerful theorem which will help to strengthen the convergence to almost sure, treat $m$-ary increasing trees for general $m\ge 2$, random recursive trees, and linear preferential attachment trees.

\subsection{Crump-Mode-Jagers trees and fringe trees}
\label{subsec:CMJ}

A \textit{Crump-Mode-Jagers} (CMJ) branching process generalizes, among many other random tree models, $m$-ary increasing trees and random recursive trees. Heuristically speaking, CMJ branching processes  provide a method of embedding trees growing in discrete steps into a corresponding  continuous time process. The CMJ process is defined by a point process $\Xi=(\xi_1, \xi_2, \dots)$, called the reproduction process.  At time zero, a single vertex is born, which becomes the root of the tree, and the children of the root are born at time $\xi_1, \xi_2, \cdots$. Similarly, each vertex $v$ born at time $t_v$ has an independent copy of $\Xi$ denoted as $\Xi_v=\xi_{v,1}, \xi_{v,2}, \cdots$, and the offspring of $v$ are born at time $t_v+\xi_{v,1}, t_v+\xi_{v,2}, \cdots$. So far we defined a branching process that grows over time. We obtain a random tree from this branching process by stopping the process at time $\tau$ and taking only the vertices (individuals) that have already been born. The stopping time $\tau$ can depend on the tree (very often $\tau$ is the time the $n^{th}$ individual is born), or it can be an independent random variable.

\begin{definition}[Linear preferential attachment reproduction process]
\label{def:LPARP}
Let the reproduction process $\Sigma_{\rho,\chi}=(\xi_1, \xi_2, \dots, )$ with parameters $\rho>0$ and $\chi \in\{-1,0,1\}$ be a \textit{linear preferential attachment reproduction process} if
\begin{equation}\label{eq:exp-repr}\xi_j-\xi_{j-1} \sim \mathrm{Exp}(\rho+\chi (j-1))\end{equation}
are independent exponential random variables, with the convention that $\xi_0:=0$ (but it does not count as a birth event). If $\chi=-1$, let us also assume $\rho \in \mathbb N$ and let us truncate the process to $\rho $ terms (i.e. $\Sigma_{\rho,\chi}=(\xi_1,\dots, \xi_{\rho-1}, \xi_\rho)$), which assures that the exponential random variables in \eqref{eq:exp-repr} are well-defined.
\end{definition}

\begin{lemma}\label{lem:embedding}
A CMJ tree with a linear preferential attachment reproduction process $\Sigma_{\rho,\chi}$ stopped when it reaches $n$ vertices has the same distribution as a linear preferential attachment tree with $n$ vertices and parameters $\rho$ and $\chi$.\end{lemma}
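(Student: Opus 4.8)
The plan is to show that the discrete-time tree-valued process recorded at the birth times of the CMJ process is a Markov chain whose one-step transition probabilities are exactly those of the linear preferential attachment tree. The two tools I would use are the memorylessness of the exponential distribution and the competing-exponentials (superposition) property: if $E_1,\dots,E_k$ are independent with $E_\ell\sim\mathrm{Exp}(\lambda_\ell)$, then $\min_\ell E_\ell\sim\mathrm{Exp}(\sum_\ell\lambda_\ell)$ and, independently of the value of this minimum, $\P(E_\ell=\min_j E_j)=\lambda_\ell/\sum_j\lambda_j$.

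First I would set notation: let $\tau_1=0<\tau_2<\cdots$ be the successive birth times, and let $\mathcal{T}(i)$ be the rooted tree formed by the first $i$ individuals born, labelled by birth order and recording only the genealogy. For a vertex $v$ present in $\mathcal{T}(i)$, write $c_v(i)$ for its number of children; note that $\sum_v c_v(i)=i-1$, that $c_v$ equals $\deg(v)$ for the root and $\deg(v)-1$ for every other vertex, and that in the $\chi=-1$ case the truncation to $\rho$ terms makes the rate $\rho+\chi c_v=\rho-c_v$ vanish exactly when $v$ has reached its maximal $\rho$ children, so the construction is well defined. The heart of the argument is the inductive claim that, conditionally on $\mathcal{T}(i)$ together with $(\tau_1,\dots,\tau_i)$, the residual time until each vertex $v$ produces its next child is an independent $\mathrm{Exp}(\rho+\chi c_v(i))$ variable. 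This is where memorylessness enters: a vertex $v$ born at $\tau_v$ with $c_v$ children so far satisfies $\tau_v+\xi_{c_v}\le \tau_i<\tau_v+\xi_{c_v+1}$, and since $\xi_{c_v+1}-\xi_{c_v}\sim\mathrm{Exp}(\rho+\chi c_v)$, the lack-of-memory property gives the residual time the same exponential law, independently of how long $v$ has already been waiting; independence across $v$ follows from the independence of the reproduction processes $\Sigma_v$.

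Granting the claim, the competing-exponentials property identifies the $(i+1)$-st individual as a child of $v$ with probability $(\rho+\chi c_v(i))/\sum_w(\rho+\chi c_w(i))$, which is precisely the linear preferential attachment rule; moreover, after this birth the newborn starts a fresh $\Sigma$-process (residual $\mathrm{Exp}(\rho)$ to its first child) and $v$'s clock advances to a fresh $\mathrm{Exp}(\rho+\chi(c_v+1))$ increment while all other residual times are preserved by memorylessness, so the inductive hypothesis is restored at level $i+1$. Chaining these one-step identities from $i=1$ to $i=n-1$ yields the equality in law of $\mathcal{T}(n)$ with the linear preferential attachment tree on $n$ vertices.

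I expect the main obstacle to be making the residual-law claim rigorous rather than heuristic: one must argue that conditioning simultaneously on the entire discrete history $\mathcal{T}(i)$ and on the vector of birth times $(\tau_1,\dots,\tau_i)$, rather than on a single waiting time, leaves the residual reproduction times exponentially distributed with the stated rates. The clean way to do this is to invoke the strong Markov property of the underlying branching process at the birth times, which are stopping times for the natural filtration, and then apply memorylessness increment by increment; care is also needed with the bookkeeping distinguishing the root from the other vertices (the shift between $c_v$ and $\deg(v)$) and, in the $\chi=-1$ regime, with checking that truncation is consistently encoded by the vanishing rate.
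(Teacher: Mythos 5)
Your proof is correct and follows exactly the route the paper intends: the paper gives no proof of Lemma \ref{lem:embedding}, simply attributing it to the memoryless property of exponential random variables and citing Sections 6.3--6.4 of the Holmgren--Janson survey, and your memorylessness-plus-competing-exponentials induction at the birth times is precisely the standard argument behind that citation. Your attention to the $c_v$ versus $\deg(v)$ bookkeeping is also well placed, since the paper's attachment weight $\rho+\chi\,\mathrm{deg}_i(v)$ is consistent with its own $m$-ary increasing tree computation and with the rates in Definition \ref{def:LPARP} only when $\mathrm{deg}_i(v)$ is read as the number of children (out-degree), which is exactly what your rates $\rho+\chi c_v(i)$ match.
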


This lemma is due to the memoryless property of exponential random variables; the proofs can be found in \cite[Sections 6.3, 6.4]{holmgren2017fringe}. 

The interesting property of CMJ trees is that the fringe tree distribution of the random CMJ tree stopped at $n$ vertices is again a random CMJ tree, with the \emph{same reproduction process},   stopped at a random time that is independent of the number of vertices. This independence of the stopping time will be heavily exploited in our proofs. In this paper, we only use the results on the fringe trees of linear preferential attachment trees. We refer to \cite{holmgren2017fringe} for the general statement on CMJ trees.

\begin{theorem}[\cite{jagers1975branching, nerman1981convergence},Theorem 5.14 of \cite{holmgren2017fringe}]
\label{thm:CMJ_helper}
Let $(\mathcal{T}_n^{(\rho,\chi)})_{n\ge 1}$ be a growing sequence of linear preferential attachment trees with $n$ vertices and parameters $\rho>0$ and $\chi\in\{-1,0,1\}$. Let $\mathcal{F}$ be the corresponding CMJ tree stopped at random time $\mathrm{Exp}(\rho+\chi)$. Then, for every subtree property $\mathcal{P}$,
\begin{equation}
\label{eq:CMJ_helper}
\frac{n_\mathcal{P}(\mathcal{T}_n^{(\rho,\chi)})}{n} \xrightarrow{a.s.} \P(\mathcal{F} \in \mathcal{P}).
\end{equation}
\end{theorem}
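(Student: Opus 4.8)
The plan is to recognise Theorem~\ref{thm:CMJ_helper} as an instance of the strong law of large numbers for general (Crump--Mode--Jagers) branching processes counted by random characteristics, due to Jagers and Nerman. First I would determine the \emph{Malthusian parameter} $\alpha$ of the reproduction process $\Sigma_{\rho,\chi}$, defined as the unique positive root of $\hat\mu(\alpha)=1$, where $\hat\mu(\alpha)=\int_0^\infty e^{-\alpha t}\mu(\diff t)=\sum_{j\ge 1}\E[e^{-\alpha\xi_j}]$ and $\mu$ is the intensity measure of $\Sigma_{\rho,\chi}$. Since the interbirth gaps are independent exponentials, $\E[e^{-\alpha\xi_j}]=\prod_{k=1}^j\frac{\rho+\chi(k-1)}{\rho+\chi(k-1)+\alpha}$, and a direct computation (using $\hat\mu(1)=1$ for $\chi\in\{-1,0\}$ with the stated normalisations, and the analogous telescoping product for $\chi=1$) shows that $\alpha=\rho+\chi$ in every case. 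This is exactly the rate of the exponential stopping time appearing in the statement, which is the first indication that the formula is correct.

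Next I would encode both the numerator and the denominator of the left-hand side of \eqref{eq:CMJ_helper} as branching processes counted by random characteristics. Run the CMJ process in continuous time and, for an individual $v$ born at time $\sigma_v$, let $\mathcal{F}_v(a)$ denote the subtree consisting of $v$ together with all of its descendants born within age $a$ of $v$; by the branching property $\{\mathcal{F}_v(a)\}_{a\ge 0}$ is a copy of the generic CMJ tree $\{\mathcal{F}(a)\}_{a\ge 0}$ and depends only on the daughter process of $v$. Setting the characteristic $\phi^{\mathcal{P}}_v(a)=\mathbf 1[\mathcal{F}_v(a)\in\mathcal{P}]$, the associated counted process $Z^{\phi^{\mathcal P}}_t=\sum_v\phi^{\mathcal{P}}_v(t-\sigma_v)$ equals $n_{\mathcal P}$ of the tree grown up to real time $t$, because $\mathcal{P}$ is a subtree property and the fringe of $v$ at time $t$ is exactly $\mathcal{F}_v(t-\sigma_v)$. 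Taking the trivial characteristic $\phi^{\mathbf 1}_v(a)=\mathbf 1[a\ge 0]$ gives $Z^{\phi^{\mathbf 1}}_t=N(t)$, the total number of vertices born by time $t$.

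I would then invoke Nerman's almost sure convergence theorem: under the Malthusian hypothesis together with an $x\log x$ moment condition on $\Sigma_{\rho,\chi}$, for a bounded random characteristic $\phi$ one has $e^{-\alpha t}Z^{\phi}_t\xrightarrow{a.s.} c\,\E[\hat\phi(\alpha)]\,W$, where $\hat\phi(\alpha)=\int_0^\infty e^{-\alpha a}\phi(a)\diff a$, the constant $c=(\alpha(-\hat\mu'(\alpha)))^{-1}$ does not depend on $\phi$, and $W$ is the Malthusian martingale limit common to all characteristics on the same realisation. Applying this to $\phi^{\mathcal P}$ and $\phi^{\mathbf 1}$ and dividing, the factors $e^{-\alpha t}$, $c$ and $W$ all cancel, leaving
\begin{equation}
\frac{n_{\mathcal P}(\mathcal T^{(\rho,\chi)}_{N(t)})}{N(t)}=\frac{Z^{\phi^{\mathcal P}}_t}{Z^{\phi^{\mathbf 1}}_t}\xrightarrow{a.s.}\frac{\E[\hat{\phi}^{\mathcal P}(\alpha)]}{\E[\hat{\phi}^{\mathbf 1}(\alpha)]}=\frac{\int_0^\infty e^{-\alpha a}\P(\mathcal F(a)\in\mathcal P)\diff a}{1/\alpha}=\int_0^\infty \alpha e^{-\alpha a}\,\P(\mathcal F(a)\in\mathcal P)\diff a.
\end{equation}
Since $\alpha=\rho+\chi$, the measure $\alpha e^{-\alpha a}\diff a$ is the law of an $\mathrm{Exp}(\rho+\chi)$ variable, so the right-hand side is precisely $\P(\mathcal F\in\mathcal P)$ for $\mathcal F$ the CMJ tree stopped at an independent $\mathrm{Exp}(\rho+\chi)$ time, as claimed. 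Finally I would pass from the continuous-time limit to the discrete sequence indexed by the number of vertices: by Lemma~\ref{lem:embedding} the tree stopped at its $n$th birth time $\tau_n$ has the law of $\mathcal T^{(\rho,\chi)}_n$, and since $N(t)\to\infty$ a.s.\ and $N$ increases by one at each birth, evaluating the convergent ratio along $t=\tau_n$ yields \eqref{eq:CMJ_helper}.

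The main obstacle I anticipate is verifying the hypotheses of Nerman's theorem rather than the algebra above: one must check that the Malthusian parameter is finite and strictly positive and that $\Sigma_{\rho,\chi}$ satisfies the required integrability and $x\log x$ moment condition guaranteeing $W>0$ a.s.\ and almost sure (as opposed to merely in-probability) convergence. For $\chi=-1$ the reproduction process is truncated to finitely many points and these conditions are immediate; for $\chi\in\{0,1\}$ the point process is infinite and the tail of $\Sigma_{\rho,1}$ must be controlled, which is the delicate point. Since these verifications are carried out in the cited references, I would quote them and concentrate the argument on the identification of the limit.
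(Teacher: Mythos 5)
Your proposal is correct and follows essentially the same route as the source the paper cites for this theorem (the paper itself imports it without proof): it is exactly the Jagers--Nerman law of large numbers for CMJ processes counted with random characteristics, applied with the whole-progeny characteristic $\mathbf 1[\mathcal F_v(t-\sigma_v)\in\mathcal P]$ and the trivial characteristic, together with the identification $\alpha=\rho+\chi$, which is how Theorem 5.14 of \cite{holmgren2017fringe} is proved. Your computation of the Malthusian parameter, the cancellation of $W$ in the ratio, the identification of $\alpha e^{-\alpha a}\diff a$ with the $\mathrm{Exp}(\rho+\chi)$ stopping time, and the deferral of the integrability checks to \cite{jagers1975branching, nerman1981convergence} all match the cited argument.
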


\subsection{Expressing the metric dimension with subtree properties}
In this section we reduce the metric dimension of trees to counting subtrees with certain properties. Recall Theorem \ref{thm_slater} that expresses the MD of a tree as the difference between the number of leaves and that of exterior major vertices.
\begin{definition}
\label{PE}
Let $\mathcal{P}_L$ be the subtree property that the subtree is a single vertex, that is a leaf. Let $\mathcal{P}_K$ be the subtree property that the root has degree at least \emph{two} and at least one of its subtrees is a line-graph to a leaf (a single vertex is considered to be a line). 
\end{definition}

\begin{figure}[h]
  \includegraphics[width=0.9\textwidth]{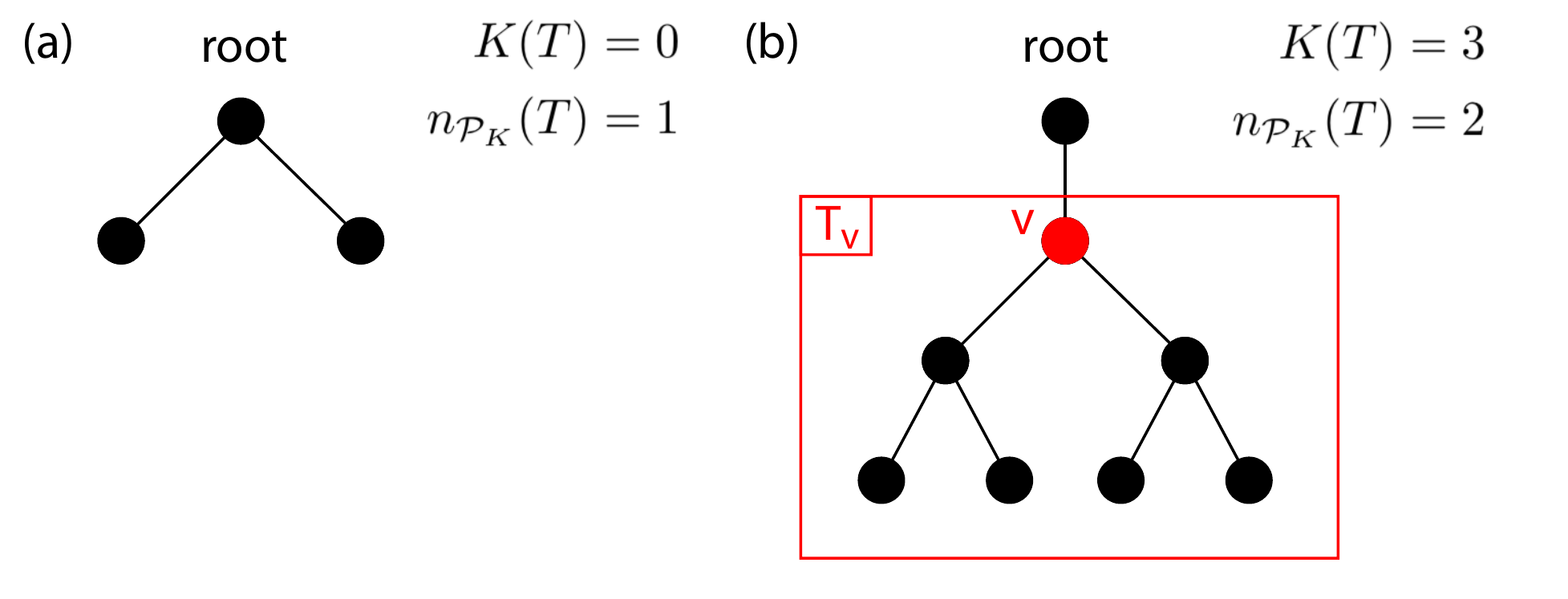}
   \captionsetup{width=\linewidth}
  \caption{Illustration for the proof of Lemma \ref{lem:PE}. The subfigures (a) and (b) show the smallest trees where $n_{\mathcal{P}_K}(T)-K(T)=\pm 1$, respectively. The inequality $n_{\mathcal{P}_K}(T)-K(T)>0$ holds only for trees in which the root has degree 2, and the root has a line-graph to a leaf. In this case the root has property $\mathcal{P}_K$, but it does not count into $K(T)$ since it has degree 2. The inequality $n_{\mathcal{P}_K}(T)-K(T)<0$ holds only for trees in which the root that has degree 1, and the first descendant of the root with degree 3 (node $v$) has no other line-graph to a leaf. In this case $v$ counts into $K(T)$, but it does not have property $\mathcal{P}_K$.}
  \label{fig:lem32}
\end{figure}

\begin{lemma}
\label{lem:PE}
For any sequence of trees $T_n$, with $|T_n| \rightarrow \infty$ and  $\mathcal{P}_L,\mathcal{P}_K$ given by Definition \ref{PE},
\begin{equation}
\frac{\beta(T_n)}{|T_n|} = \frac{|L(T_n)| - |K (T_n)|}{|T_n|} = \frac{n_{\mathcal{P}_L}({T}_n)}{|T_n|}-\frac{n_{\mathcal{P}_K}({T}_n)}{|T_n|}+\frac{\varepsilon}{|T_n|},
\end{equation}
where $\varepsilon\in \{-1,0,1\}$.
\end{lemma}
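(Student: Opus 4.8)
The plan is to show that the quantity $\beta(T_n) = |L(T_n)| - |K(T_n)|$ differs from the subtree-count difference $n_{\mathcal{P}_L}(T_n) - n_{\mathcal{P}_K}(T_n)$ by at most one vertex. First I would observe that leaves are \emph{exactly} the vertices with subtree property $\mathcal{P}_L$: a vertex $v$ is a leaf of $T_n$ iff its subtree $(T_n)_v$ is a single vertex, since $(T_n)_v$ is the component containing $v$ after removing its parent, and this is a singleton precisely when $v$ has no children. This identity is exact and requires no error term, so the entire discrepancy $\varepsilon$ must come from comparing $|K(T_n)|$ with $n_{\mathcal{P}_K}(T_n)$.

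The heart of the argument is therefore to analyze, vertex by vertex, when membership in $K(T_n)$ (being an \emph{exterior major node}, i.e.\ $\deg(v) \ge 3$ with a line-graph to a leaf) differs from having property $\mathcal{P}_K$ (the root of the subtree has \emph{at least two} children, one of whose subtrees is a line-graph to a leaf). The key subtlety is the root of the whole tree: the property $\mathcal{P}_K$ is a \emph{subtree} property and so ``sees'' only the subtree hanging below $v$, counting the children of $v$ within $(T_n)_v$, whereas the global degree $\deg(v)$ in $T_n$ also counts the edge to $v$'s parent. The plan is to split into two cases based on the degree of the root of $T_n$, as suggested by Figure~\ref{fig:lem32}. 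For a non-root vertex $v$, the number of children in $(T_n)_v$ equals $\deg(v) - 1$, so ``root of subtree has degree at least two (children)'' matches ``$\deg(v) \ge 3$''; I would check that in this case the line-graph-to-a-leaf condition also coincides, so every non-root vertex contributes identically to both counts. The mismatch can only occur at the root of $T_n$, and there are exactly two offending configurations. If $\mathrm{root}(T_n)$ has degree $2$ and one of its two descending subtrees is a line to a leaf, then the root has property $\mathcal{P}_K$ but is not a major node (degree $2 < 3$), giving $n_{\mathcal{P}_K} - |K| = +1$; this is subfigure (a). If $\mathrm{root}(T_n)$ has degree $1$, then the first descendant $w$ of degree $3$ whose only line-graph-to-a-leaf is the one leading back toward the root direction will be counted in $K$ but may fail $\mathcal{P}_K$ because that line is no longer visible within its own downward subtree; this gives $-1$, subfigure (b).

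I would then argue that these two bad events are \emph{mutually exclusive} and that each contributes at most a single vertex, so the total deviation is $\varepsilon \in \{-1, 0, +1\}$. Since both depend only on a neighborhood of the root of $T_n$ (degree of the root, and the structure of one line-graph), at most one such anomalous vertex exists per tree, which pins $|\varepsilon| \le 1$. Dividing through by $|T_n|$ and recalling the exact leaf identity $|L(T_n)| = n_{\mathcal{P}_L}(T_n)$ then yields the displayed equation.

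The main obstacle I anticipate is the careful bookkeeping in the degree-$1$-root case: I must verify that among all major nodes, only the specific near-root vertex $w$ described above can fail $\mathcal{P}_K$, and that no \emph{other} deep vertex in the tree can be an exterior major node without satisfying $\mathcal{P}_K$. The reason this works is that any exterior major node $v$ strictly below the root has $\deg(v) - 1 \ge 2$ children in its own subtree, and its witnessing line-graph-to-a-leaf must point away from the root (hence lie inside $(T_n)_v$) \emph{unless} that witnessing line is the unique one and it goes through $v$'s parent; the latter can happen only for the single vertex $w$ adjacent to the degree-$1$ root structure. Making this ``only one witness can escape'' claim fully rigorous — i.e.\ ruling out a second independent source of error further down the tree — is the delicate step, but it follows from the observation that for any $v$ not on the initial path from the degree-$1$ root, both the degree condition and the line-graph condition are entirely internal to $(T_n)_v$ and hence faithfully captured by $\mathcal{P}_K$.
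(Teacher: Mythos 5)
Your strategy is essentially the paper's: the leaf identity $|L(T_n)|=n_{\mathcal{P}_L}(T_n)$, the observation that a non-root vertex with property $\mathcal{P}_K$ is automatically an exterior major node, and the identification of anomalous near-root configurations as in Figure~\ref{fig:lem32}. However, your case analysis contains a genuine flaw: you claim the two bad events are \emph{mutually exclusive} and that the undercount ($v\in K(T_n)$ but $(T_n)_v\notin\mathcal{P}_K$) can only arise when the root has degree $1$. Both claims are false. Consider a root of degree $2$ with one subtree a line-graph to a leaf, and let $w$ be the first major vertex in the other subtree, reached from the root by a path of degree-two vertices, with no line-graph hanging below $w$. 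Then $w$ is an exterior major node --- its witnessing line runs up through the root and down the line subtree --- yet $(T_n)_w\notin\mathcal{P}_K$, so the undercount occurs with a degree-$2$ root; simultaneously the root satisfies $\mathcal{P}_K$ without having degree $3$, so the overcount occurs in the \emph{same} tree. The paper's proof explicitly anticipates this: it allows ``$\mathrm{root}(T)$ has degree one or two,'' requiring in the degree-two case that the root's other subtree be a line-graph. Your final-paragraph assertion that the escaping witness ``can happen only for the single vertex $w$ adjacent to the degree-$1$ root structure'' misses exactly this configuration.

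The lemma's conclusion happens to survive your misclassification: in the co-occurrence configuration the $+1$ and $-1$ errors cancel, so the net deviation is $0$, still within $\{-1,0,1\}$. But your proof as written derives the bound \emph{from} the false exclusivity, so it needs repair. The cleanest fix is the paper's route: prove the two one-sided inequalities separately --- $n_{\mathcal{P}_K}(T)\le |K(T)|+1$ (only the root can have $\mathcal{P}_K$ without being an exterior major node, since a non-root with two children has degree at least three) and $|K(T)|\le n_{\mathcal{P}_K}(T)+1$ (at most one exterior major vertex, namely the first major vertex on the unique subtree of the root containing a major vertex, can have its only witnessing line escape through its parent) --- and combine them to get $|n_{\mathcal{P}_K}(T)-|K(T)||\le 1$ with no exclusivity claim needed. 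One further caveat, which you share with the paper: the leaf identity is not quite exact either, since a degree-one root of $T_n$ is a leaf by Definition~2.2 but its fringe subtree is all of $T_n$, giving $|L(T_n)|=n_{\mathcal{P}_L}(T_n)+1$ in that case; a careful bookkeeping shows this extra $+1$ can only co-occur with a nonpositive $K$-discrepancy, so $\varepsilon\in\{-1,0,1\}$ still holds, but ``exact and requires no error term'' overstates it.
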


\begin{proof}
We are going to show the equivalent statement that for any deterministic rooted tree $T$, we must have $n_{\mathcal{P}_L}(T)=~|L(T)|$ and $|n_{\mathcal{P}_K}(T)-|K(T)|| \le 1$. The equality $n_{\mathcal{P}_L}(T)=L(T)$ follows from the definition. Next we show that $|n_{\mathcal{P}_K}(T)-|K(T)|| \le 1$ (see also Figure \ref{fig:lem32}).

If $v \in V$ is not the root of $T$, then $T_v \in \mathcal{P}_K$ implies $v \in K(T)$. This is because $v$ must have at least two children by the property $\mathcal{P}_K$ and a parent vertex since $v$ is not the root, which means that $v$ has degree at least three. By the definition of $\mathcal{P}_K$, $T_v$  contains a line-graph to a leaf. Hence $n_{\mathcal{P}_K}(T)-1 \le |K(T)|$.

For the other direction, we argue that $v \in K(T)$ implies $T_v \in \mathcal{P}_K$, except for at most one vertex $v \in V$. This is because $v$ has degree at least three by the exterior major vertex property, two of which must be the children of $v$ in $T_v$. Moreover, the path of degree two vertices to a leaf ensured by the exterior major vertex property must be a subtree that is a path in $T_v$, unless the path of degree two vertices to a leaf is through the parent of $v$. This can only happen if all ancestors of $v$ have degree two, the $\mathrm{root}(T)$ has degree one or two, and if $\mathrm{root}(T)$ has another subtree that does not contain $v$, this must be a line-graph. In other words, $\mathrm{root}(T)$ can have only one subtree with a major vertex, and $v$ must be the first major vertex on this subtree, if such a $v$ exists. Hence $|K(T)| -1 \le n_{\mathcal{P}_K}(T)$.
\end{proof}

In all of our proofs we will combine Lemma \ref{lem:PE} with either Theorem \ref{thm:GW_helper}, \ref{thm:BST_helper} or \ref{thm:CMJ_helper}.  Since $\P(\mathcal{F} \in \mathcal{P}_L)$ is an easy computation in all cases, most of the difficulty will come from computing $\P(\mathcal{F} \in \mathcal{P}_K)$, where $\mathcal{F}$ is a random tree having the limiting fringe tree distribution (see formulas \eqref{eq:GW_helper}, \eqref{eq:BST_helper} and \eqref{eq:CMJ_helper}). To compute $\P(\mathcal{F} \in \mathcal{P}_K)$, often it will be useful to condition on the degree of the root of $\mathcal{F}$, and another event $\mathcal E$, that will be the ringing time of the doomsday clock $\mathrm{Exp}(\rho+\chi)$ in Theorem \ref{thm:CMJ_helper}. Recall that for any non-negative discrete random variable $Y$ we denote by 
$$G_Y(x)=\sum_{n=0}^\infty \P(Y=n) x^n$$
the probability generating function of $Y$ evaluated at $x$.

\begin{lemma}
\label{lem:PFmidX}
Let $\kappa$ be the degree of $\mathrm{root}(\mathcal{F})$. If $v$ is an offspring of $\mathrm{root}(\mathcal{F})$, let $B_v$ be the event that $\mathcal{F}_v$ is a line-graph. Suppose that for some event $\mathcal E$ the indicators of $B_v$, conditioned on $\kappa$ and $\mathcal{E}$, are independent and identically distributed Bernoulli random variables with parameter $q$. Then,
\begin{equation}
\label{eq:PFmidX}
\P(\mathcal{F} \in \mathcal{P}_K \mid \mathcal{E}) = 1-G_{\kappa \mid \mathcal{E}}(1-q)-q\P(\kappa=1 \mid \mathcal{E}).
\end{equation}
\end{lemma}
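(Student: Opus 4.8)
The plan is to prove the identity by conditioning on the root degree $\kappa$ together with the event $\mathcal{E}$, so that the only remaining randomness is carried by the line-graph indicators $B_v$. First I would unpack what membership in $\mathcal{P}_K$ means for $\mathcal{F}$: by Definition \ref{PE}, $\mathcal{F}\in\mathcal{P}_K$ precisely when the root has degree at least two and at least one of the subtrees hanging off the root is a line-graph to a leaf. Since $\mathcal{F}$ is rooted and its root has no parent, the root degree equals the number of children $\kappa$; hence the degree condition is simply $\kappa\ge 2$, while the line-graph condition is that at least one of the events $B_v$, ranging over the children $v$ of the root, occurs.

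Next I would exploit the hypothesis that, conditioned on $\kappa$ and $\mathcal{E}$, the indicators $\mathbf{1}_{B_v}$ are i.i.d.\ Bernoulli($q$). Conditioned on $\{\kappa=k\}\cap\mathcal{E}$, the probability that none of the $k$ subtrees is a line equals $(1-q)^k$ by independence, so the probability that at least one of them is a line equals $1-(1-q)^k$. Summing over the admissible degree values $k\ge 2$ gives
\begin{equation}
\P(\mathcal{F}\in\mathcal{P}_K\mid\mathcal{E})=\sum_{k\ge 2}\P(\kappa=k\mid\mathcal{E})\bigl(1-(1-q)^k\bigr).
\end{equation}

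Finally I would recognise this sum as a probability generating function value with two boundary corrections. By definition $G_{\kappa\mid\mathcal{E}}(1-q)=\sum_{k\ge 0}\P(\kappa=k\mid\mathcal{E})(1-q)^k$, so
\begin{equation}
1-G_{\kappa\mid\mathcal{E}}(1-q)=\sum_{k\ge 0}\P(\kappa=k\mid\mathcal{E})\bigl(1-(1-q)^k\bigr).
\end{equation}
The $k=0$ summand vanishes because $1-(1-q)^0=0$, while the $k=1$ summand equals $q\,\P(\kappa=1\mid\mathcal{E})$; subtracting this single term from the full sum leaves exactly the $k\ge 2$ sum above, which yields the claimed formula. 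The only real subtlety, and the step I would be most careful about, is the boundary case $k=1$: a tree whose root has a single child forming a line is itself a line, which must be excluded from $\mathcal{P}_K$ by the degree-at-least-two requirement, and this exclusion is precisely what produces the correction term $-q\,\P(\kappa=1\mid\mathcal{E})$.
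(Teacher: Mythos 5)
Your proof is correct and takes essentially the same route as the paper: both condition on $\{\kappa=k\}\cap\mathcal{E}$, use the i.i.d.\ Bernoulli hypothesis to get $(1-q)^k$ for the event that no subtree off the root is a line, and then recognise the sum over $k\ge 2$ as $1-G_{\kappa\mid\mathcal{E}}(1-q)$ corrected by the boundary term $q\,\P(\kappa=1\mid\mathcal{E})$ (the $k=0$ term vanishing automatically). The only cosmetic difference is that the paper computes via the complement $\P(A^c\cup B^c\mid\mathcal{E})$ whereas you sum $1-(1-q)^k$ directly; the manipulations are identical.
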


\begin{proof}
 Let $A$ be the event that the root has at least two offspring, and $B_i$ be the event that the root of the $i^{th}$ subtree is born, and the subtree is a line-graph. Let us denote the event $B:=\cup_{i\ge 1}B_i$. By definition the event $\mathcal{F} \in \mathcal{P}_K = A\cap B$. Then we can write,
\begin{align}\label{eq:pe-calc}
\P(\mathcal{F} \in \mathcal{P}_K \mid \mathcal{E}) &= \P(A \cap B \mid \mathcal{E})=1- \P(A^c \cup B^c \mid \mathcal{E}) \nonumber \\
&= 1-\left(\P(\kappa =0 \mid \mathcal{E})+\P(\kappa =1 \mid \mathcal{E})+\sum\limits_{k=2}^\infty \P(B^c_1 \cap \dots \cap B^c_k \mid \kappa =k, \mathcal{E}) \P(\kappa =k \mid \mathcal{E})\right)\nonumber \\
&= 1-\P(\kappa =0 \mid \mathcal{E})-\P(\kappa =1 \mid \mathcal{E})-\sum\limits_{k=2}^\infty (1-q)^k \P(\kappa =k \mid \mathcal{E}),
\end{align}
where the last line followed since we assumed that $B_i$ are independent $\mathrm{Ber}(q)$ conditioned on $\kappa$ and $\mathcal{E}$.
Noticing that the last sum is the generating function of $(\kappa \mid \mathcal{E})$ evaluated at $1-q$, except that the index starts from two instead of zero, we get
\begin{align}
\P(\mathcal{F} \in \mathcal{P}_K \mid \mathcal{E}) &= 1-\P(\kappa =0 \mid \mathcal{E})-\P(\kappa =1 \mid \mathcal{E})-G_{\kappa \mid \mathcal{E}}(1-q)+\sum\limits_{n=0}^1 (1-q)^k \P(\kappa =k \mid \mathcal{E}) \nonumber \\
& = 1-G_{\kappa \mid \mathcal{E}}(1-q)-q\P(\kappa=1 \mid \mathcal{E})
\end{align}
\end{proof}

\begin{remark}
If we were interested in simply exterior vertices, using the same ideas, the expression in equation \eqref{eq:PFmidX} would simplify to $1-G_{\kappa \mid\mathcal{E}}(1-q)$.
\end{remark}

\section{Proofs}\label{s:proof}
In this section we prove Theorems \ref{thm:GW}, and \ref{thm:BST}--\ref{thm:LPAT}.
\subsection{Metric dimension of conditioned Galton-Watson trees}

\begin{proof}[Proof of Theorem \ref{thm:GW}]
Combining Lemma \ref{lem:PE}, and Theorem \ref{thm:GW_helper}, we have that
\begin{equation}
\label{eq:GW1}
\frac{\beta(\mathcal{T}_n)}{n} \xrightarrow{p} \P(\mathcal{F} \in \mathcal{P}_L) - \P(\mathcal{F} \in \mathcal{P}_K),
\end{equation} 
where $\mathcal{F}$ is a Galton-Watson tree with offspring distribution $\xi$.

Clearly, $\P(\mathcal{F} \in \mathcal{P}_L) = p_0$. It remains to compute $\P(\mathcal{F} \in \mathcal{P}_K)$. Since the subtrees of each offspring in a Galton-Watson tree are independent the conditions of Lemma \ref{lem:PFmidX} are satisfied without conditioning.

We still need to find the value of $q=\P(B_v)$, which is the probability that $\mathcal{F}_v$ is a line-graph since the subtree $\mathcal{F}_v$ is independent of the degree of the root of $\mathcal{F}$. Vertex $v$ can have (i) zero offspring, in which case $\mathcal{F}_v$ is a (trivial) line graph, (ii) one offspring, in which case $\mathcal{F}_v$ is a line with probability $q$, or (iii) more than one offspring, in which case $\mathcal{F}_v$ is not a line. Hence, we have the equation
\begin{equation}
q = p_0+qp_1,
\end{equation}
which gives $q=p_0/(1-p_1)$. Substituting equation \eqref{eq:PFmidX} into equation \eqref{eq:GW1} with $q=p_0/(1-p_1)$ we obtain the desired result.
\end{proof}

\subsection{Metric dimension of binary search trees (combinatorial proof)}

\begin{proof}[Proof of Theorem \ref{thm:BST}, $m=2$]

Combining Lemma \ref{lem:PE} and Theorem \ref{thm:BST_helper}, we obtain that
\begin{equation}
\label{eq:BST1}
\frac{\beta(\mathcal{T}_n^{(2,-1)})}{n} \xrightarrow{p} \sum\limits_{k=1}^\infty \frac{2}{(k+1)(k+2)}\P(\mathcal{T}_k^{(2,-1)} \in \mathcal{P}_L) - \sum\limits_{k=1}^\infty \frac{2}{(k+1)(k+2)}\P(\mathcal{T}_k^{(2,-1)} \in \mathcal{P}_K)
\end{equation} 

Clearly $\P(\mathcal{T}^{(2,-1)}_k \in \mathcal{P}_L)$ equals $1$ for $k=1$ and $0$ for $k>1$, which implies that the first term in equation \eqref{eq:BST1} is $1/3$.

It remains to compute the second term in equation \eqref{eq:BST1}. Recall full and potential vertices from the description of binary search trees on page 2. Let $k'=k-1$ and $S_k \in \{0,\dots, k'\}$ be the number of (full) vertices in the left subtree when the tree has $k$ (full) vertices. Notice, that the number of potential vertices in the left and right subtrees follows a P\'olya urn process with two urns initially with a single white and a single black ball, and that the number of full vertices is always one less than the number of potential vertices in each subtree.  Elementary calculation using induction shows that $S_k$ is then uniform over the set $\{0,\dots, k'\}$, or in other words   $\P(S_k=\ell)=1/(k'+1)$,  see e.g. \cite[Theorems 5.2, 5.3]{Hofs17b}.

Since $S_k \in \{0,k'\}$ implies that the root has degree less than two $\P(\mathcal{T}_k \in \mathcal{P}_K | S_k \in  \{0,k'\} )=0$. By the law of total probability, 
\begin{equation}
\begin{aligned}
\label{eq:BST_ltp}
\P(\mathcal{T}^{(2,-1)}_k \in \mathcal{P}_K)&=\sum\limits_{\ell=0}^{k'}\P(\mathcal{T}^{(2,-1)}_k \in \mathcal{P}_K | S_k=\ell)\P(S_k=\ell)\\
&=\frac{1}{k'+1}\sum\limits_{\ell=1}^{k'-1}\P(\mathcal{T}_k^{(2,-1)} \in \mathcal{P}_K | S_k=\ell).
\end{aligned}
\end{equation}
Now we focus on the second condition of $\mathcal{P}_K$, the existence of a subtree that is a line. If a subtree has $\ell$ vertices, we argue that the probability that it is a line is $$\prod\limits_{i=3}^\ell \frac{2}{i}=\frac{2^{\ell-1}}{\ell!}.$$
Indeed, if the subtree has just one or two vertices, it must be a line. Thereafter, conditionally that the subtree is a line after having $i-1$ vertices, when we place the $i^{th}$ vertex into the subtree, we have to sample from $i$ possible places, only two of which keep the subtree a line. Namely, the children of the last vertex on the line. Here we use that the placement of vertices in the binary search tree is uniform over the possible locations, and conditioned that the vertex falls into the left (resp. right) subtree, its placement is uniform over the available locations within this subtree.  
To compute the probability that at least one of the subtrees is a line we apply an elementary inclusion-exclusion argument. For $1\le \ell \le k'-1$, we have
\begin{align}
\P(\mathcal{T}_k^{(2,-1)} \in \mathcal{P}_K | S_k=\ell)&=\P(B_1 | S_k=\ell)+ \P(B_2 | S_k=\ell) -\P(B_1 \cap B_2 | S_k=\ell) \nonumber \\
&=\frac{2^{\ell-1}}{\ell!} + \frac{2^{k'-\ell-1}}{(k'-\ell)!} -\frac{2^{k'-2}}{\ell!(k'-\ell)!},
\end{align}
where in the last term we used that conditioned on their sizes, the left and right subtree evolve independently.
Substituting the rhs back into equation \eqref{eq:BST_ltp} and using the basic identities of binomial coefficients, and recalling that $k'=k-1$, we obtain
\begin{align}
\label{eq:BST_bin}
\sum\limits_{\ell=1}^{k'-1}\P(\mathcal{T}^{(2,-1)}_k \in \mathcal{P}_K | S_k=\ell) &= \sum\limits_{\ell=1}^{k'-1}  \frac{2^{\ell-1}}{\ell!} + \frac{2^{k'-\ell-1}}{(k'-\ell)!} -\frac{2^{k'-2}}{\ell!(k'-\ell)!}=\sum\limits_{\ell=1}^{k'-1}  \frac{2^{\ell}}{\ell!} - \frac{2^{k'-2}}{k'!}\sum\limits_{\ell=1}^{k'-1} \binom{k'}{\ell} \nonumber \\
&=\sum\limits_{\ell=1}^{k'-1}  \frac{2^{\ell}}{\ell!}-\frac{2^{k'-2}(2^{k'}-2)}{k'!}.
\end{align}
Substituting \eqref{eq:BST_bin} into \eqref{eq:BST_ltp} and then into \eqref{eq:BST1} we obtain (with $k'=k-1$)
\begin{equation}
\label{eq:BST_intermediate}
\frac{\beta(\mathcal{T}_n^{(2, -1)})}{n} \xrightarrow{p} \frac{1}{3} - \sum\limits_{k=3}^\infty \frac{2}{(k+1)(k+2)(k'+1)} \left( \sum\limits_{l=1}^{k'-1}  \frac{2^{l}}{l!}-\frac{2^{k'-2}(2^{k'}-2)}{k'!}\right).
\end{equation}
Getting a closed form expression for $\sum_{\ell=1}^{k'-1}  2^{\ell}/\ell!$ is difficult, but it is clearly bounded by $e^2$. Since the sum $\sum_{k=\ell+2}^\infty 2/(k(k+1)(k+2))$ is also bounded, we can swap the order of the sums to get the easier expression
\begin{align}
\label{eq:BST_firstterm}
\sum\limits_{k=3}^\infty \frac{2}{k(k+1)(k+2)} \sum\limits_{\ell=1}^{k-2}  \frac{2^{\ell}}{\ell!} = \sum\limits_{\ell=1}^{\infty}  \frac{2^{\ell}}{\ell!} \sum\limits_{k=\ell+2}^\infty \frac{2}{k(k+1)(k+2)}. 
\end{align}
The sum $\sum_{k=\ell+2}^\infty 2/(k(k+1)(k+2))$ can be evaluated by elementary arithmetic operations and a telescopic sum. Indeed,
\begin{align}
\sum\limits_{k=\ell+2}^\infty \frac{2}{k(k+1)(k+2)}=\sum\limits_{k=\ell+2}^\infty \left(\frac{1}{k(k+1)}-\frac{1}{(k+1)(k+2)}\right) = \frac{1}{(\ell+2)(\ell+3)}. 
\end{align}
Substituting back into equation \eqref{eq:BST_firstterm}, elementary arithmetic operations give
\begin{align}
 \sum\limits_{\ell=1}^{\infty}  \frac{2^{\ell}}{\ell!} \frac{1}{(\ell+2)(\ell+3)} = \sum\limits_{\ell=1}^{\infty} \frac{(\ell+1)2^{\ell}}{(\ell+3)!} 
&=\sum\limits_{\ell=1}^{\infty} \frac{(\ell+3)2^{\ell}}{(\ell+3)!} - \sum\limits_{\ell=1}^{\infty} \frac{2\cdot 2^{\ell}}{(\ell+3)!} =\frac13.
\end{align}
The last equality follows if we notice that the sum that we are subtracting is the same as the sum we are subtracting from, except it is shifted by one index. Hence, the result of the subtraction is the simply the first term of the sum. A similar compuation yields the following equalities,
\begin{align}
\sum\limits_{k=3}^\infty \frac{2}{k(k+1)(k+2)}\frac{2^{k'-2}(2^{k'}-2)}{k'!} =\sum\limits_{k=3}^\infty \frac{2^{2k-3}}{(k+2)!}-\frac{2^{k-1}}{(k+2)!} =\frac{ 3 e^4 - 48 e^2 +233}{384}.
\end{align}
Finally, substituting into equation \eqref{eq:BST_intermediate} we obtain
\begin{equation}
\frac{\mathrm{\beta}(\mathcal{T}_n^{(2,-1)})}{n} \xrightarrow{p} \frac13-\frac13+ \frac{ 3 e^4 - 48 e^2 +233}{384} = \frac{ 3 e^4 - 48 e^2 +233}{384},
\end{equation}
which is the desired result.
\end{proof}

\subsection{Metric dimension of general linear preferential attachment trees (proof using fringe trees)}
In this section we prove Theorems \ref{thm:BST}, \ref{thm:RRT} and \ref{thm:LPAT}. First, we state a few preliminary lemmas. We handle all values of $(\rho, \chi)$ together until the last step when we obtain the numerical values.
Recall that Lemma \ref{lem:embedding} gives an embedding of $(\mathcal T^{(\rho,\chi)}_n)_{n\ge 1}$ into a Crump-Mode-Jagers process with reproduction function $\Sigma_{\rho,\chi}$ given in Definition \ref{def:LPARP}. Combining Lemma \ref{lem:PE}, and Theorem \ref{thm:CMJ_helper}, we have that
\begin{equation}
\label{eq:BST_p_1}
\frac{\beta(\mathcal{T}_n^{(\rho,\chi)})}{n} \xrightarrow{a.s.} \P(\mathcal{F} \in \mathcal{P}_L) - \P(\mathcal{F} \in \mathcal{P}_K),
\end{equation} 
where $\mathcal{F}$ is a CMJ tree with offspring point process $\Sigma_{\rho,\chi}$ stopped at random time $\tau=\mathrm{Exp}(\rho+\chi)$.

By Definition \ref{def:LPARP}, the time of the first offspring of the root of $\mathcal{F}$ is an $\mathrm{Exp}(\rho)$ random variable. To find $\P(\mathcal{F} \in \mathcal{P}_L)$ we need to compute the probability that the doomsday clock $\mathrm{Exp}(\rho+\chi)$ rings before the first offspring clock $\mathrm{Exp}(\rho)$. Hence, 
\begin{equation}\label{eq:leaf}
\P(\mathcal{F} \in \mathcal{P}_L) = \frac{\rho+\chi}{2\rho+\chi}.
\end{equation}

Next, we check that the conditions of Lemma \ref{lem:PFmidX} are satisfied, which will help us to find $\P(\mathcal{F} \in \mathcal{P}_K)$.
Let $\Sigma_{\rho,\chi}=(\xi_1, \xi_2 \dots)$ be a linear preferential attachment reproduction process as described in Definition \ref{def:LPARP}. 
We will apply Law of Total Probability with respect to the ringing time of the doomsday clock $\tau$. So, 
for infinitesimal $\mathrm dx$, let us take $\mathcal E_x:=\{\tau\in(x, x+\mathrm dx)\}$ be the event that the doomsday clock $\tau$ rings in the interval  $(x, x+\mathrm dx)$. Recall that we denote by $\kappa$ the degree of the root of $\mathcal F$. Recall that we write $\kappa$ for the number of children of the root in the limiting fringe tree $\mathcal F$. 
\begin{lemma}
\label{lem:can_use}
Conditioned on $\mathcal E_x\cap \{\kappa=k\}$,  the (unordered) set of times  $\{ \xi_1, \dots, \xi_k\} $ have the same distribution as $k$ i.i.d. random variables with density
\begin{equation}
\label{eq:def_gy}
g_x(y)=\frac{1}{Z_{g}(x)}e^{\chi y}
\end{equation}
supported on the interval $[0,x]$, with $Z_{g}(x)=\int_0^x e^{\chi y} \diff y$.
\end{lemma}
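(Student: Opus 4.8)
The plan is to compute the joint density of the ordered birth times $(\xi_1,\dots,\xi_k)$ directly from the definition of $\Sigma_{\rho,\chi}$, condition on $\mathcal{E}_x\cap\{\kappa=k\}$, and verify that the resulting conditional density factorizes into a product of $e^{\chi y_i}$ terms. First I would write the increments $d_j:=\xi_j-\xi_{j-1}\sim\mathrm{Exp}(\lambda_j)$ with $\lambda_j:=\rho+\chi(j-1)$ independent, so that on the region $0<y_1<\dots<y_k$ the joint density of $(\xi_1,\dots,\xi_k)$ equals $\prod_{j=1}^k \lambda_j\, e^{-\lambda_j(y_j-y_{j-1})}$ (the Jacobian of the map $(d_j)\mapsto(\xi_j)$ is one), with the convention $y_0:=0$. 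The core algebraic step is to simplify the exponent $-\sum_{j=1}^k \lambda_j(y_j-y_{j-1})$ by an Abel summation: regrouping by $y_i$ and using the identity $\lambda_i-\lambda_{i+1}=-\chi$ collapses the sum to $-(\rho+\chi k)y_k+\chi\sum_{i=1}^k y_i$. Hence the ordered density takes the clean form $\bigl(\prod_{j=1}^k \lambda_j\bigr)\, e^{-(\rho+\chi k)y_k}\, e^{\chi\sum_{i=1}^k y_i}$.

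Next I would incorporate the conditioning. Since $\tau\sim\mathrm{Exp}(\rho+\chi)$ is independent of $\Sigma_{\rho,\chi}$, on the event $\mathcal{E}_x$ the density of $\tau$ at $x$ is a common multiplicative factor, and $\{\kappa=k\}$ becomes $\{\xi_k\le x<\xi_{k+1}\}$. Given $\xi_k=y_k$, the event $\xi_{k+1}>x$ has probability $e^{-(\rho+\chi k)(x-y_k)}$, using that $\lambda_{k+1}=\rho+\chi k$. When $\chi=-1$ and $k=\rho$ this factor degenerates to $1$, which matches the truncation of the process at $\rho$ terms, so the argument needs no separate edge case. Multiplying this survival factor into the ordered density, the two $y_k$-dependent exponentials combine into $e^{-(\rho+\chi k)x}$, which is constant in $(y_1,\dots,y_k)$. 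Therefore, up to a normalizing constant depending only on $x$ and $k$, the conditional density of $(\xi_1,\dots,\xi_k)$ given $\mathcal{E}_x\cap\{\kappa=k\}$ is proportional to $\prod_{i=1}^k e^{\chi y_i}$ on the simplex $0<y_1<\dots<y_k\le x$.

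Finally, I would recognize this as exactly the law of the order statistics of $k$ i.i.d.\ samples from the density $g_x(y)\propto e^{\chi y}$ on $[0,x]$; passing to the unordered set (symmetrizing over the $k!$ orderings) yields the claimed conclusion, with normalization $Z_g(x)=\int_0^x e^{\chi y}\diff y$. The only real subtlety I anticipate is the bookkeeping in the Abel-summation step together with confirming that every $x$-dependent but $y$-independent factor — the density of $\tau$ at $x$, the prefactor $\prod_j\lambda_j$, and the survival probability of $\xi_{k+1}$ — genuinely cancels against the normalization. Once that cancellation is in place the factorization, and hence the i.i.d.\ representation with density $g_x$, is immediate.
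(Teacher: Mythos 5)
Your proposal is correct and follows essentially the same route as the paper's proof: both write the ordered joint density of the birth times together with the survival factor $e^{-(\rho+\chi k)(x-y_k)}$ for the $(k+1)$-st birth, observe (your Abel summation is the paper's remark that the coefficient of each $y_j$ in the exponent is $\chi$) that everything except $\prod_i e^{\chi y_i}$ is constant in $(y_1,\dots,y_k)$, then symmetrize over orderings and condition on $\kappa=k$ via Bayes. Your explicit check of the degenerate case $\chi=-1$, $k=\rho$ is a small welcome addition not spelled out in the paper, but it does not change the argument.
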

This statement is commonly known for $\chi=0$, when $\Sigma_{(\rho,0)}$ is a Poisson point process (PPP) on $\R^+$ with intensity $\rho$. In this  case, the lemma states that conditioned on the event that $\Sigma_{(\rho,0)}$ has $k$ points on the interval $[0,x]$, the locations of these points have the same distribution as that of $k$ i.i.d. uniform random variables on $[0,x]$. 
\begin{proof}[Proof of Lemma \ref{lem:can_use}]
Recall the distribution of the consecutive birth times 
$$\xi_j-\xi_{j-1}\ {\buildrel d \over =}\ \mathrm{Exp}(\rho+(j-1)\chi).$$ 
Conditioned on $\mathcal E_x$, the density that there are $k$ children of the fringe-root, precisely born at ordered times $\underline r:=(r_1, r_2, \dots, r_k)$, and the $(k+1)$-st child has $r_{k+1}>x$ is:
\[  f_o(k, r_1, \dots, r_k \mid \mathcal E_x):=\rho e^{-\rho r_1} (\rho+\chi) e^{-(\rho+\chi) (r_2-r_1) } \cdot \dots \cdot (\rho+\chi (k-1)) e^{-(\rho+\chi (k-1)) (r_k-r_{k-1}) } e^{-(\rho+\chi k) (x-r_k)}.\]
Observing that the coefficient of $r_j$ in the exponent is $\chi$, we see that
\begin{equation}
\label{eq:dens}  f_o(k, r_1, \dots, r_k \mid \mathcal E_x)=\frac{1}{Z_{f_o}(x) } \cdot e^{\chi(r_1 + \dots + r_k)} = \frac{Z_{g}(x)^k}{Z_{f_o}(x)}\prod\limits_{i=1}^k g_x(r_i),\end{equation}
where $Z_{f_o}(x)=\mathrm{e}^{-(\rho+\chi k)}/\prod_{i=0}^{k-1} (\rho+i\chi)$ is the normalizing factor independent of $\underline r$ (as long as $\underline r$ is really an ordered sequence, otherwise $f_o(k, r_1, \dots, r_k \mid x)=0$). However, we are not interested in the density of the ordered set of times. The \textit{unordered} set of times $\{\xi_1, \dots \xi_\kappa\}$ has density
\[ f_u(k, r_1, \dots, r_k \mid \mathcal E_x) = \frac{1}{k!} f_o(k, r_1, \dots, r_k \mid \mathcal E_x) = \frac{Z_{g}(x)^k}{k! Z_{f_o}(x)} \prod\limits_{i=1}^k g_x(r_i)\]
by the symmetry of the possible permutations of $r_1, \dots r_k$. Conditioning on $k$, by Bayes rule we know that 

\[ f_u(r_1, \dots, r_k \mid \mathcal E_x,\kappa =k) = \frac{1}{\P(\kappa =k\mid \mathcal E_x)}f_u(k, r_1, \dots, r_k \mid \mathcal E_x) = \frac{1}{Z_{f_u}(x)} \prod\limits_{i=1}^k g_x(r_i),  \]
where $Z_{f_u}(x)$ is the appropriate normalizing factor independent of $\{ r_1, \dots, r_k \}$, that is $Z_{f_u}(x)=Z_g(x)^k$. Since the density $f_u(r_1, \dots, r_k \mid \mathcal E_x,\kappa =k)$ is the product of the densities $g(r_i)$, the random variables $\{\xi_1, \dots \xi_\kappa\}$ must be i.i.d., with density $g_x(y)$.
\end{proof}
The implication of this lemma is that conditioned on $\mathcal E_x$ and $\kappa=k$, the $k$ subtrees of the fringe root are born independently at times following density $g_x(y)$, and evolve independently. Consequently, we can apply Lemma \ref{lem:PFmidX}, and we proceed to computing the terms that appear in \eqref{eq:PFmidX}. Some of these terms can be simply deduced from a result of \cite{holmgren2017fringe}.

\begin{lemma}[Theorem A.7.\ of \cite{holmgren2017fringe}]\label{lem:generating}
The offspring distribution of the root (denoted by $\kappa$) of a linear preferential attachment tree with parameters $\rho$ and $\chi$ stopped at time $x$ is given by $\mathrm{NBin}(\rho, e^{x})$ if $\chi=1$, $\mathrm{Poi}(\rho x)$ if $\chi=0$ and $\mathrm{Bin}(\rho,1-e^{-x})$ if $\chi=-1$, where $\mathrm{NBin}$ denotes the negative binomial distribution, $\mathrm{Poi}$ denotes the Poisson distribution and $\mathrm{Bin}$ denotes the binomial distribution. In particular,
\begin{equation}
\label{eq:gen_fn}
G_{\kappa \mid \mathcal E_x}(z)=
\begin{cases}
(e^{\chi x} + (1-e^{\chi x})z)^{-\rho/\chi} & \text{for $\chi = \pm 1$} \\
e^{-x(1-z)} & \text{for $\chi = 0, \rho=1$},
\end{cases}
\end{equation}
and
\begin{equation}
\label{eq:P1}
\P(\kappa=1 \mid  \mathcal E_x)=
\begin{cases}
-\frac{\rho}{\chi} (1-e^{\chi x})e^{-x(\rho+\chi)} & \text{for $\chi = \pm 1$} \\
xe^{-x} & \text{for $\chi = 0, \rho=1$}.
\end{cases}
\end{equation}
\end{lemma}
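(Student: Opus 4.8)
The plan is to identify $(\kappa\mid\mathcal E_x)$, the root degree conditioned on the doomsday clock ringing at time $x$, as the number of points of the reproduction process $\Sigma_{\rho,\chi}$ falling in $[0,x]$. Since $\tau$ is independent of $\Sigma_{\rho,\chi}$, conditioning on $\mathcal E_x$ does not alter the law of the reproduction process, so $(\kappa\mid\mathcal E_x)$ has the law of $N_x$, where $N_t$ is the pure-birth counting process with $N_0=0$ whose jump rate from state $k$ to $k+1$ equals $\rho+\chi k$; this is exactly the rate of the $(k+1)$-st increment $\xi_{k+1}-\xi_k\sim\mathrm{Exp}(\rho+\chi k)$ in Definition \ref{def:LPARP}. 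Everything then reduces to computing the law of $N_x$ for $\chi\in\{-1,0,1\}$.

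First I would dispatch the two cases that admit transparent probabilistic descriptions. For $\chi=0$ the increments are i.i.d.\ $\mathrm{Exp}(\rho)$, so $\Sigma_{\rho,0}$ is a homogeneous Poisson process of rate $\rho$ and $N_x\sim\mathrm{Poi}(\rho x)$; specializing to $\rho=1$ yields the generating function $e^{-x(1-z)}$ and $\P(\kappa=1\mid\mathcal E_x)=xe^{-x}$ at once. For $\chi=-1$ the rate out of state $k$ is $\rho-k$, the number of still-empty slots among $\rho$ total; equivalently, I would attach to the root $\rho$ independent rate-$1$ alarm clocks, each ringing by time $x$ with probability $1-e^{-x}$ independently of the others, and note that their superposition reproduces the state-dependent rate $\rho-k$. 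Hence $N_x\sim\mathrm{Bin}(\rho,1-e^{-x})$, from which the stated generating function and $\P(\kappa=1\mid\mathcal E_x)$ follow by a one-step differentiation.

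The remaining and least elementary case is $\chi=1$, where the rate $\rho+k$ has no finite-slot interpretation (and $\rho$ need not be an integer), so I would work at the level of the generating function. Writing $p_k(t)=\P(N_t=k)$ and $G(z,t)=\sum_{k\ge0}p_k(t)z^k$, the forward Kolmogorov equations $p_k'=(\rho+\chi(k-1))p_{k-1}-(\rho+\chi k)p_k$ translate into the first-order linear PDE
\begin{equation*}
\partial_t G-\chi z(z-1)\,\partial_z G=\rho(z-1)\,G,\qquad G(z,0)=1,
\end{equation*}
which in fact holds for every $\chi$. I would then verify that the ansatz $G(z,t)=(e^{\chi t}+(1-e^{\chi t})z)^{-\rho/\chi}$ solves this PDE with the correct initial data (a one-line substitution using the $\partial_t$ and $\partial_z$ derivatives of the base $e^{\chi t}+(1-e^{\chi t})z$), and recognise the result as the probability generating function of $\mathrm{NBin}(\rho,e^{-x})$ when $\chi=1$, consistent with the binomial answer of the previous paragraph when $\chi=-1$. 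Setting $t=x$ gives \eqref{eq:gen_fn}.

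Finally, the formulas \eqref{eq:P1} for $\P(\kappa=1\mid\mathcal E_x)$ follow by extracting the coefficient of $z$, i.e.\ by evaluating $\partial_z G(z,x)$ at $z=0$, which for $\chi=\pm1$ produces $-\tfrac{\rho}{\chi}(1-e^{\chi x})e^{-x(\rho+\chi)}$. The hard part will be solely the $\chi=1$ computation, since there is no clean combinatorial slot-filling picture there; solving (or merely verifying) the generating-function PDE is the crux, whereas the $\chi\in\{0,-1\}$ cases and all the extractions of $\P(\kappa=1\mid\mathcal E_x)$ are routine. Since the statement is quoted from Theorem A.7 of \cite{holmgren2017fringe}, one could alternatively invoke it directly and perform only the differentiation step.
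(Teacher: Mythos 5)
Your proposal is correct, but note that the comparison here is somewhat one-sided: the paper offers no proof of this lemma at all, stating only ``We refer the reader to \cite{holmgren2017fringe} for a proof'' of the quoted Theorem A.7. Your argument is therefore a genuine, self-contained alternative, and it is sound. The reduction of $(\kappa\mid\mathcal E_x)$ to the time-$x$ state of the pure-birth process with rates $\rho+\chi k$ is legitimate precisely because the doomsday clock $\tau\sim\mathrm{Exp}(\rho+\chi)$ is independent of the reproduction process, so conditioning on $\mathcal E_x$ amounts to evaluating the unconditioned process at the deterministic time $x$; the $\chi=0$ (Poisson) and $\chi=-1$ (superposition of $\rho$ independent rate-$1$ alarm clocks, giving $\mathrm{Bin}(\rho,1-e^{-x})$) cases are handled exactly as one should, and your PDE $\partial_t G-\chi z(z-1)\partial_z G=\rho(z-1)G$ is the correct translation of the forward Kolmogorov equations $p_k'=(\rho+\chi(k-1))p_{k-1}-(\rho+\chi k)p_k$; I checked that the ansatz $(e^{\chi t}+(1-e^{\chi t})z)^{-\rho/\chi}$ satisfies it with $G(z,0)=1$, and that $\partial_z G(z,x)\big|_{z=0}=-\tfrac{\rho}{\chi}(1-e^{\chi x})e^{-x(\rho+\chi)}$ recovers \eqref{eq:P1}. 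Two points deserve explicit mention in a polished write-up. First, to conclude that the pgf solving the PDE is \emph{the} pgf of $N_x$ you need uniqueness of the probability solution of the forward system, which requires non-explosion of the birth chain; this is immediate here since $\sum_k 1/(\rho+\chi k)=\infty$ for $\chi\in\{0,1\}$ and the $\chi=-1$ chain has finitely many states, but it should be said. Second, your identification $\mathrm{NBin}(\rho,e^{-x})$ for $\chi=1$ silently corrects what is evidently a typo in the quoted statement, which reads $\mathrm{NBin}(\rho,e^{x})$: since $e^{x}>1$ cannot be a success probability, and since the displayed pgf rewrites as $(e^{x}+(1-e^{x})z)^{-\rho}=e^{-\rho x}\left(1-(1-e^{-x})z\right)^{-\rho}$, which is the pgf of $\mathrm{NBin}(\rho,e^{-x})$, your version is the right one; flagging the discrepancy rather than passing over it would strengthen the proof. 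Compared with simply citing the survey, your route buys a transparent and elementary derivation covering all three regimes uniformly (the PDE in fact handles every $\chi$, with the combinatorial pictures as consistency checks), at the modest cost of the non-explosion caveat.
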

We refer the reader to \cite{holmgren2017fringe} for a proof.
The last unknown variable that we need to compute to apply Lemma \ref{lem:PFmidX} is $q=\P(B_v \mid \kappa=k, \mathcal E_x)$, the probability that a subtree $\mathcal F_v$ of a child $v$ of $\mathrm{root}(\mathcal F)$ is a line graph.
\begin{definition}
\label{def:notation}
For an offspring $v$ of the root of $\mathcal F$, let us denote $v$ by $v_0$, and $v_j$ the first offspring of $v_{j-1}$ for $j\ge 1$. In addition, let us denote by $\tau_{v_j}$ the birth time of $v_j$. Let $\tau_{v_j,2}$ denote the birth-time of the second offspring of individual $v_j$ and let $\tau_2=\min(\{ \tau_{v_j,2} \} )$. 
\end{definition}
We condition on the doomsday clock to ring at time $x$ (this the event $\mathcal E_x$). Since  we assumed that $v=v_0$ is an offspring of a root, and $v$ is alive before time $x$, by Lemma \ref{lem:can_use}, the random variable $\tau_{v_0}$ has density $g_x(y)$ defined in equation \eqref{eq:def_gy}. By definition, the event $B_v$ holds if none of the $v_j$ have two offspring until time $x$, hence, we must find $q=\P(\tau_2>x)$. To describe $\tau_2$, the following definition will be useful.
\begin{definition}
\label{def:exp_inc}
Consider a Poisson point process $\Pi:=\{0=\pi_0, \pi_1, \pi_2, \dots\}$ on $\mathbb R^+$ with intensity $\lambda \in \mathbb{R}^+$ and let $(Y_j)_{j\ge 1}$ be an independent collection of exponential variables, independent of $\Pi$, with $Y_j $ having parameter $j\nu \in \mathbb{R}^+$. Let $\zeta:=\min\{j: Y_{j} \le \pi_{j+1}-\pi_{j}\}$. Then, the exponential random variable with Poisson increasing rate is
\begin{equation}
 H_{\lambda,\nu}= \pi_\zeta +Y_{\zeta}.
 \end{equation}
  \color{black}
 \end{definition}
Due to the memoryless property of exponential variables, we can think of $ H_{\lambda,\nu}$ as a single exponential clock, that starts with initial rate $0$ at time $0$, and every time the governing Poisson point process $\Pi$ has a new point, the rate of the clock increases by $\nu$. The next lemma relates $H_{\lambda, \nu}$ to $\tau_2$:
\begin{lemma}
\label{lem:exp_inc}
Recall that $\tau_{v_0}$ has density $g(y)$ defined in equation \eqref{eq:def_gy}, and let $H_{\rho,\rho+\chi}$ be an exponential random variable with Poisson increasing rate as defined in Definition \ref{def:exp_inc} independent of $\tau_{v_0}$. Then,
\begin{equation}
\label{eq:exp_inc}
\P(\tau_2>x) = \P( H_{\rho,\rho+\chi} + \tau_{v_0} >x).
\end{equation}
\end{lemma}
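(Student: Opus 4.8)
The plan is to make the birth-time structure along the leftmost spine $v_0,v_1,v_2,\dots$ of Definition \ref{def:notation} explicit, and to recognise the resulting first-second-offspring time as the increasing-rate clock of Definition \ref{def:exp_inc}. First I would record the relevant delays. In the reproduction process $\Sigma_{\rho,\chi}$, the first offspring of any individual arrives after an $\mathrm{Exp}(\rho)$ delay, and once the first offspring is present, the second arrives after a further independent $\mathrm{Exp}(\rho+\chi)$ delay. Writing $E_0,E_1,\dots\sim\mathrm{Exp}(\rho)$ for the first-offspring delays and $F_0,F_1,\dots\sim\mathrm{Exp}(\rho+\chi)$ for the second-after-first delays (all mutually independent), the vertex $v_{j+1}$ is born at $\tau_{v_{j+1}}=\tau_{v_0}+\pi_{j+1}$ with $\pi_{j+1}:=E_0+\dots+E_j$, and the second offspring of $v_j$ is born at $\tau_{v_j,2}=\tau_{v_{j+1}}+F_j=\tau_{v_0}+\pi_{j+1}+F_j$. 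Consequently
\[ \tau_2 \;=\; \min_{j\ge 0}\tau_{v_j,2} \;=\; \tau_{v_0}+\min_{j\ge 0}\bigl(\pi_{j+1}+F_j\bigr), \]
where $\Pi=\{0=\pi_0,\pi_1,\dots\}$ is a rate-$\rho$ Poisson point process on $\Rpos$. By Lemma \ref{lem:can_use} the spine delays are independent of $\tau_{v_0}$ (the doomsday clock of $\mathcal E_x$ only fixes $x$ and is independent of the reproduction processes, while $\tau_{v_0}\sim g_x$ as in \eqref{eq:def_gy}). Thus it suffices to show $\min_{j\ge 0}(\pi_{j+1}+F_j)\stackrel{d}{=}H_{\rho,\rho+\chi}$, after which adding the independent $\tau_{v_0}$ yields \eqref{eq:exp_inc}.

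The key step is this distributional identity, which I would justify via the hazard-rate (memorylessness) interpretation of the increasing-rate clock. Condition on $\Pi$. A vertex $v_j$ contributes a running second-offspring clock once its first offspring has been born, i.e.\ from relative time $\pi_{j+1}$ onwards, and this clock rings at rate $\rho+\chi$. By the memoryless property, at any relative time $t$ the conditional hazard that $\min_j(\pi_{j+1}+F_j)$ occurs at $t$ equals $N(t)(\rho+\chi)$, where $N(t)=\#\{i\ge 1:\pi_i\le t\}$ is the number of spine vertices already carrying such a clock; since $N(t)=j$ on $[\pi_j,\pi_{j+1})$, this hazard is $j(\rho+\chi)$ there. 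This is exactly the rate profile of $H_{\lambda,\nu}$ with $\lambda=\rho,\ \nu=\rho+\chi$ described after Definition \ref{def:exp_inc} (rate $0$ until the first point of $\Pi$, increasing by $\rho+\chi$ at each subsequent point), so the two first-ringing times have the same conditional law given $\Pi$, hence the same law.

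To turn this into a rigorous identity I would verify matching survival functions. Conditioning on $\Pi$ and letting $N=N(x)$ with $\pi_N\le x<\pi_{N+1}$, the staggered clocks give
\[ \P\bigl(\min_{j\ge 0}(\pi_{j+1}+F_j)>x\mid\Pi\bigr)\;=\;\prod_{i=1}^{N}e^{-(\rho+\chi)(x-\pi_i)}\;=\;\exp\!\Bigl(-(\rho+\chi)\bigl(Nx-\textstyle\sum_{i=1}^{N}\pi_i\bigr)\Bigr), \]
whereas integrating the piecewise-constant rate $j(\rho+\chi)$ of $H_{\rho,\rho+\chi}$ over $[0,x]$ produces the same exponent $(\rho+\chi)(Nx-\sum_{i=1}^{N}\pi_i)$ after a short Abel summation. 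Equality of the conditional survival functions, integrated against the common law of $\Pi$, gives $\min_{j\ge 0}(\pi_{j+1}+F_j)\stackrel{d}{=}H_{\rho,\rho+\chi}$. Adding back the independent $\tau_{v_0}\sim g_x$ then proves $\P(\tau_2>x)=\P(H_{\rho,\rho+\chi}+\tau_{v_0}>x)$.

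The main obstacle is precisely the reconciliation of the two descriptions of the clock: along the CMJ spine each vertex carries its own $\mathrm{Exp}(\rho+\chi)$ clock that starts at a staggered random time, whereas $H_{\rho,\rho+\chi}$ is a single clock of deterministically increasing rate. Matching them requires the memorylessness bookkeeping (equivalently, the Abel-summation survival-function computation), together with care that the conditioning on $\mathcal E_x$ and the density $g_x$ of $\tau_{v_0}$ factor out cleanly through Lemma \ref{lem:can_use}. The remaining manipulations are the routine exponential-integral computations indicated above.
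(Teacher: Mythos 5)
Your proof is correct, and while it rests on the same structural observations as the paper's (the first-offspring times along the spine $v_0,v_1,\dots$ form a rate-$\rho$ Poisson process; each spine vertex, once its first child is born, waits an independent $\mathrm{Exp}(\rho+\chi)$ time for a second child; independence from $\tau_{v_0}$ comes from the independent reproduction copies in the CMJ construction), the key verification step differs. The paper works sequentially: on each inter-arrival interval $(\tau_{v_j},\tau_{v_{j+1}})$ it merges the $j$ waiting clocks into a single $\mathrm{Exp}(j(\rho+\chi))$ variable, uses memorylessness to restart clocks at each spine birth, and thereby matches the defining construction $H=\pi_\zeta+Y_\zeta$ of Definition \ref{def:exp_inc} directly. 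You instead keep the individual staggered clocks, obtain the exact pathwise identity $\tau_2=\tau_{v_0}+\min_{j\ge 0}(\pi_{j+1}+F_j)$, and then prove the distributional identity $\min_{j\ge 0}(\pi_{j+1}+F_j)\stackrel{d}{=}H_{\rho,\rho+\chi}$ by matching conditional survival functions given $\Pi$, via the Abel summation $\sum_{j=1}^{N-1}j(\pi_{j+1}-\pi_j)+N(x-\pi_N)=Nx-\sum_{i=1}^N\pi_i$. This buys you something: the pathwise decomposition avoids the somewhat informal clock-restarting bookkeeping, and your conditional survival formula $\exp\bigl(-(\rho+\chi)(Nx-\sum_{i=1}^N\pi_i)\bigr)$, once integrated over the law of $\Pi$, would also yield the tail formula of Lemma \ref{lem:general_form} in one stroke, so the two lemmas could be merged. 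Two small points to tighten: the independence of the spine delays from $\tau_{v_0}$ follows from the CMJ construction (independent copies of $\Xi$ per vertex) rather than from Lemma \ref{lem:can_use}, which only supplies the density $g_x$ of $\tau_{v_0}$; and since Definition \ref{def:exp_inc} defines $H$ via $(\zeta,Y_\zeta)$ rather than via a hazard rate, you should state explicitly that $\P(H>x\mid\Pi)=\P(\forall j<N:\,Y_j>\pi_{j+1}-\pi_j,\ Y_N>x-\pi_N\mid\Pi)$, which reduces to the same exponent by your Abel summation — a one-line check, after which the argument is complete.
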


\begin{figure}[h]
  \includegraphics[width=\textwidth]{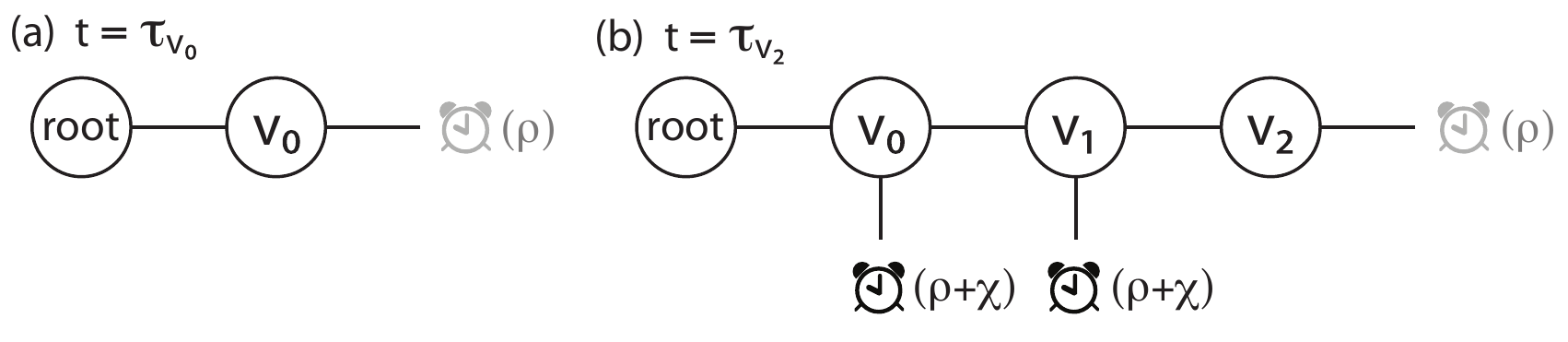}
   \captionsetup{width=\linewidth}
  \caption{Illustration of the proof of Lemma \ref{lem:exp_inc}. Part (a) shows the tree at time $t=\tau_{v_0}$, when only $v_0$ is born, and part (b) shows the tree at time $t=\tau_{v_2}$ assuming $\tau_2>\tau_{v_2}$. If $v_j$ is the last-born vertex at some time $t<\tau_2$, we have an (grey) exponential clock with intensity $\rho$ to govern the Poisson point process $\tau_{v_1},\tau_{v_2}, \dots$, and $j$ (black) exponential clocks with intensity $(\rho+\chi)$ that govern $\tau_2$. If the grey clock rings, a new (black) exponential clock with intensity $(\rho+\chi)$ appears, and $\tau_2$ is the time when the first black clock rings.}
  \label{fig:racing_clocks}
\end{figure}

\begin{proof}
We are going to show that $\tau_2-\tau_{v_0}$ and $H_{\rho,\rho+\chi}$ has the same distribution and both are independent of $\tau_{v_0}$. First, the independence follows from the fact that differences between births of consecutive children in the Crump-Mode-Jagers tree are using independent exponential clocks, see Definition \ref{def:LPARP}.

Next we show that $\tau_2-\tau_{v_0}\ {\buildrel d \over = } \ H_{\rho,\rho+\chi}$.
First we identify the underlying PPP.  In the CMJ tree, by Definition \ref{def:LPARP}, the first offspring of every vertex is governed by an exponential clock with rate $\rho$, hence $(\tau_{v_j}-\tau_{v_0})_{j\ge 0}$ has the same distribution as $(\pi_j)_{j\ge 1}$, a Poisson point process $\Pi$ with intensity $\lambda=\rho$ in Definition \ref{def:exp_inc}. The first offspring form the line-graph emanating from $v=v_0$, see Figure~\ref{fig:racing_clocks}.

The random variable $\tau_2$ is defined as the first time any of the vertices $\{ v_j \mid j\ge 0\}$ have degree at least three. The inequality $\tau_2> \tau_{v_1}$ holds deterministically, because this is the first time any vertex (in this case, $v_0$) can have a second child within the subtree $\mathcal F_{v_0}$.
 This means that until $\tau_{v_1}=\pi_1$, $\tau_2$ cannot happen. Indeed, $\zeta=0$ cannot happen, since the rate of the exponential clock $Y_0$ is $0$, hence $Y_0\le \pi_1-0$ happens with probability $0$.  
 
 By Definition \ref{def:LPARP} again, the rate of arrival of the second child of any individual is $\rho+\chi$.
 For $j\ge 1$, let us look at a scenario when $v_0, v_1, \dots, v_{j-1}, v_j$ are born and forming a line, i.e., they are born, none of them has a second child yet, and $v_{j+1}$ has not been born yet. That is, we look at a time $t\in (\tau_{v_j}, \tau_{v_{j+1}})$.
In this scenario, all of the vertices $v_0, v_1, \dots, v_{j-1}$ are waiting for their  second offspring to be born, hence the total rate of arrival of the second offspring  is governed by an exponential clock with parameter $j(\rho+\chi)$.

As a result, between $\tau_{v_j}$ and $\tau_{v_{j+1}}$ the random variable $\tau_2$ can be described as an $\mathrm{Exp}(j(\rho+\chi))$ random variable (see Figure \ref{fig:racing_clocks}). With $\nu=\rho+\chi$, the random variable $Y_j$ in Definition \ref{def:exp_inc} is also an $\mathrm{Exp}(j(\rho+\chi))$ random variable. 

By the memoryless property of exponential variables, conditioned that $\tau_2>\tau_{v_j}$, $\tau_2$ happens before $\tau_{v_{j+1}}$ if the $\mathrm{Exp}(j(\rho+\chi))$ variable is less than $\tau_{v_{j+1}}-\tau_{v_j}$. Since $\tau_{v_{j+1}}-\tau_{v_j}\ {\buildrel d\over =}\  \pi_{j+1}-\pi_j$, this inequality can be expressed as 
$Y_j\le \pi_{j+1}-\pi_j$. 

In other words, if $Y_j\le  \pi_{j+1}-\pi_j$, then $j$ is the index of the last vertex $v_j$ that is born before $\tau_2$, and $\tau_2=\tau_{v_j} + Y_j$. Otherwise, if $Y_j > \tau_{v_{j+1}}-\tau_{v_j}$, the value of $Y_j$ is irrelevant, $\tau_{v_{j+1}}$ is born before any of the $v_0, v_1, \dots, v_{j-1}$ has a second child, and the rate of getting a second child on the line present goes up by $\rho+\chi$ since now $v_j$ is also waiting for his second offspring to be born. By the memoryless property, we can restart the clocks and use a new exponential variables for comparison. Hence, we move on to the next index $j+1$. The random variable $\zeta$ describes the first index $j$ for which $Y_j\le \tau_{v_{j+1}}-\tau_{v_j}$, which is the first (and only) ``relevant'' index. Then,
$$\tau_2-\tau_{v_0} = Y_\zeta + \tau_{v_\zeta} -\tau_{v_0}\  {\buildrel d \over =}\  Y_\zeta + \pi_\zeta,$$
which is precisely what we needed.
\end{proof}

Although for the proofs we will only need $H_{\rho,\rho+\chi}$, we find the tail distribution of $H_{\lambda,\nu}$ in a general form.

\begin{lemma}
\label{lem:general_form}
The tail distribution of $H_{\lambda,\nu}$ is given by
\begin{equation}
\P( H_{\lambda,\nu}>t)=\exp\Big\{-\lambda t + \frac{\lambda}{\nu} (1-e^{-\nu t})\Big\}.
\end{equation}
\end{lemma}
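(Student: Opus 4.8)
The plan is to compute the tail $\P(H_{\lambda,\nu}>t)$ directly from Definition~\ref{def:exp_inc} by conditioning on the underlying Poisson point process $\Pi$ and then integrating out. Recall that $H_{\lambda,\nu}=\pi_\zeta+Y_\zeta$, where $\zeta=\min\{j:Y_j\le \pi_{j+1}-\pi_j\}$ is the first index at which the increasing-rate exponential clock rings before the next Poisson point arrives. The key observation is that the event $\{H_{\lambda,\nu}>t\}$ can be described as follows: the clock whose rate increases by $\nu$ at each Poisson arrival has not yet rung by time $t$. Viewing $H_{\lambda,\nu}$ via the heuristic stated after Definition~\ref{def:exp_inc} --- a single exponential clock whose rate starts at $0$ and jumps up by $\nu$ each time $\Pi$ gains a point --- the conditional survival probability given the point configuration is the exponential of minus the integrated rate up to time $t$.

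Concretely, first I would condition on the entire realization of $\Pi$ on $[0,t]$. Given $\Pi$, the instantaneous rate of the $H$-clock at time $s\in[0,t]$ is $\nu\,N(s)$, where $N(s)=\#\{i\ge 1:\pi_i\le s\}$ is the number of Poisson points in $[0,s]$ (the clock has rate $0$ before the first point, rate $\nu$ between the first and second points, and so on, matching the fact that $\zeta=0$ is impossible because $Y_0$ has rate $0$). Hence
\begin{equation}
\label{eq:cond-survival}
\P\big(H_{\lambda,\nu}>t \,\big|\, \Pi\big)=\exp\Big\{-\nu\int_0^t N(s)\diff s\Big\}.
\end{equation}
The integral $\int_0^t N(s)\diff s$ equals $\sum_{i\ge 1}(t-\pi_i)^+$, the sum over Poisson points in $[0,t]$ of their remaining time to $t$.

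Next I would take expectation over $\Pi$ in \eqref{eq:cond-survival}. Since $\Pi$ is a Poisson point process of intensity $\lambda$ on $\R^+$, I can use the Campbell/Lévy--Khinchine formula for the Laplace functional of a Poisson process: for a measurable function $h\ge 0$,
\begin{equation}
\label{eq:campbell}
\E\Big[\exp\Big\{-\sum_i h(\pi_i)\Big\}\Big]=\exp\Big\{-\lambda\int_0^\infty\big(1-e^{-h(s)}\big)\diff s\Big\}.
\end{equation}
Applying this with $h(s)=\nu(t-s)^+=\nu(t-s)\mathbf 1_{\{s\le t\}}$ gives
\begin{equation}
\label{eq:plug-in}
\P(H_{\lambda,\nu}>t)=\exp\Big\{-\lambda\int_0^t\big(1-e^{-\nu(t-s)}\big)\diff s\Big\},
\end{equation}
and a direct substitution $u=t-s$ evaluates the inner integral to $t-\tfrac1\nu(1-e^{-\nu t})$, yielding exactly $\exp\{-\lambda t+\tfrac{\lambda}{\nu}(1-e^{-\nu t})\}$ as claimed. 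The main obstacle --- more a point requiring care than a genuine difficulty --- is justifying \eqref{eq:cond-survival} rigorously, i.e.\ that the increasing-rate clock really does have conditional survival function $e^{-\nu\int_0^t N(s)\diff s}$ given $\Pi$; this follows from the memoryless property invoked after Definition~\ref{def:exp_inc}, since conditionally on $\Pi$ the variables $(Y_j)$ are independent of $\Pi$ and the event $\{H_{\lambda,\nu}>t\}$ means every segment is survived, but one should check the boundary behaviour at the points $\pi_i$ and confirm that the Campbell formula \eqref{eq:campbell} applies with the (deterministically bounded, compactly supported) integrand $h$. If one prefers to avoid quoting the Laplace functional, an equivalent route is to expand $\P(H_{\lambda,\nu}>t)=\sum_{k\ge 0}\P(N(t)=k)\,\E[\cdots\mid N(t)=k]$, using that given $N(t)=k$ the points are distributed as $k$ i.i.d.\ uniforms on $[0,t]$, and then recognize the resulting series as the exponential generating function; both paths terminate in the same elementary integral.
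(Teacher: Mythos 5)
Your proof is correct, and it takes a genuinely different route from the paper's. The paper conditions on the \emph{number} of Poisson points $N(t)=k$ in $[0,t]$, writes the conditional survival probability as $\E\big[e^{-\nu(\pi_2-\pi_1)}e^{-2\nu(\pi_3-\pi_2)}\cdots e^{-k\nu(t-\pi_k)}\big]$, telescopes the exponents to $e^{\nu(\pi_1+\cdots+\pi_k)-k\nu t}$, uses that given $N(t)=k$ the points are i.i.d.\ uniform on $[0,t]$ to evaluate $\E[e^{\nu t U}]^k$, and finally resums the exponential series --- exactly the alternative you sketch in your closing sentences. You instead condition on the \emph{full configuration} $\Pi$, identify the conditional survival probability as $\exp\{-\nu\int_0^t N(s)\diff s\}=\exp\{-\nu\sum_i (t-\pi_i)^+\}$ (your justification is sound: given $\Pi$ the segment-survival events are independent, $\zeta=0$ is impossible since $Y_0$ has rate $0$, and only the last, incomplete segment contributes $t-\pi_k$; note that $\sum_i(t-\pi_i)^+ = k t-\sum_{\pi_i\le t}\pi_i$ is precisely the paper's telescoped exponent), and then dispatch the expectation in one stroke via the Laplace functional of the Poisson process with $h(s)=\nu(t-s)^+$, which is bounded and compactly supported, so Campbell's formula applies without qualification. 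What your route buys: the Campbell formula absorbs all the combinatorial bookkeeping --- ordering of points, telescoping, and the series summation, where one must not forget the $k=0$ term (the paper's displayed sums in fact start at $k=1$, dropping the term $e^{-\lambda t}\cdot 1$ that is needed to recover the stated exponential; your computation has no such index to get wrong). What the paper's route buys: it is entirely elementary and self-contained, using only the uniform order-statistics property of the PPP already set up in the spirit of Lemma \ref{lem:can_use}, and so requires no appeal to Laplace-functional machinery. Both arguments reduce to the same elementary integral $\int_0^t\big(1-e^{-\nu u}\big)\diff u = t-\tfrac{1}{\nu}\big(1-e^{-\nu t}\big)$, and your conclusion matches the lemma.
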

\begin{proof}[Proof of Lemma \ref{lem:general_form}]
Let us condition on the number of points in the Poisson point process $\pi_1, \pi_2, \dots$ before time $t$, which is just a Poisson random variable with intensity $\lambda t$. We have
\begin{align}
\P( H_{\lambda,\nu}>t)&=\sum\limits_{k=1}^\infty \frac{(\lambda t)^k e^{-\lambda t}}{k!} \E\big[e^{-\nu(\pi_2-\pi_1)}e^{-2\nu(\pi_3-\pi_2)} \dots e^{-(k-1)\nu(\pi_k-\pi_{k-1})}e^{-k\nu(t-\pi_{k})}\big],
\end{align}
where the expectation is over the random points $\pi_1, \dots, \pi_k$. By standard properties of the Poisson point process (in the spirit of Lemma \ref{lem:can_use} with $\rho=\lambda$ and $\chi=0$), we can sample the points $\pi_1, \dots, \pi_k$ by sampling $k$ points uniformly from interval $[0,t]$ and then indexing them such that $\pi_1 < \dots < \pi_k$. Then, by a telescopic cancellation we obtain
\begin{align}
\P( H_{\lambda,\nu}>t)&=\sum\limits_{k=1}^\infty \frac{(\lambda t)^k e^{-\lambda t}}{k!} \E\big[e^{\nu(\pi_1+\dots +\pi_{k})}\big]e^{-k\nu t}.
\end{align}
Since each $\pi_j$ appears exactly once in the sum, and we can forget about their ordering. Then, the $\pi_j$ become independent uniform random variables on $[0,t]$, and we can simplify to
\begin{align}
\P( H_{\lambda,\nu}>t)&=\sum\limits_{k=1}^\infty \frac{(\lambda t)^k e^{-\lambda t}}{k!} \E\big[e^{\nu t U[0,1]}\big]^{ k} e^{-k \nu t}\nonumber \\
&=\sum\limits_{k=1}^\infty \frac{(\lambda t)^k e^{-\lambda t}}{k!}e^{-k\nu t}\left(\int\limits_0^1 e^{\nu tx}\diff x\right)^{k} \nonumber \\
&=\sum\limits_{k=1}^\infty  \frac{(\lambda t)^k e^{-\lambda t}}{k!} e^{-k \nu t}\frac{(e^{\nu t}-1)^{k}}{(\nu t)^{k}}.
\end{align}
Now simply cancelling the appropriate terms and factoring out the term not depending on $k$ we reach the final result
\begin{align}
\P( H_{\lambda,\nu}>t)&=e^{-\lambda t}\sum\limits_{k=1}^\infty \frac{\lambda^k \nu^{-k}}{k!} \left( \frac{e^{\nu t}-1}{e^{\nu t}}\right)^{k} \nonumber \\
&=\exp\!\Big(\!-\lambda t + \frac{\lambda}{\nu} (1-e^{-\nu t})\Big).
\end{align}
\end{proof}
We proceed computing $\P(\mathcal F \in \mathcal P_K)$ in \eqref{eq:BST_p_1}. In order to do this, we make use of Lemma \ref{lem:PFmidX}, that requires the conditional generating function of $\kappa\mid \mathcal E$, that we identified in Lemma \ref{lem:generating} when we take $\mathcal E_x=\{\tau\in (x, x+\diff x)\}$. It remains to calculate $1-q=\P(\tau_2 <x)$ that is needed as the argument of the generating function.
So, we combine Lemmas \ref{lem:exp_inc} and \ref{lem:general_form} to find $q=\P(\tau_2 >x)$. By Lemma \ref{lem:exp_inc}, we must compute the convolution of $H_{\rho, \rho+\chi}$ and the random variable with density $g_x(y)$ defined in equation \eqref{eq:def_gy}, which gives
\begin{align*}
q=\P(\tau_2 >x)&=\frac{1}{Z_{g}(x)}\int\limits_0^x \mathrm{e}^{\chi(x-t)}\exp\Big(-\rho t+\frac{\rho}{\rho+\chi}-\frac{\rho \e^{-(\rho+\chi)t}}{\rho+\chi}\Big) \diff t
\end{align*}
We make the substitution $u=\frac{\rho}{\rho+\chi}\e^{-(\rho+\chi)t}$, which gives $t=-\log(\frac{\rho+\chi}{\rho}u)/(\rho+\chi)$ and $\diff t=\frac{-1}{u(\rho+\chi)}\diff u$, to get
\begin{align}
\label{eq:q_general}
q=\P(\tau_2 >x)&=\frac{\e^{\chi x+\frac{\rho}{\rho+\chi}}}{Z_{g}(x)} \int\limits_{\frac{\rho}{\rho+\chi}}^{ \frac{\rho}{\rho+\chi}\e^{-(\rho+\chi)x}}  \frac{\rho+\chi}{\rho} u\e^{-u} \frac{-1}{u(\rho+\chi)}\diff u \nonumber \\
&=\frac{\e^{\chi x+\frac{\rho}{\rho+\chi}(1-\e^{-(\rho+\chi)x})} - \e^{\chi x}}{\rho Z_{g}(x)},
\end{align}
where $Z_{g}(x)=\int_0^x e^{\chi y} \diff y$ is from Lemma \ref{lem:can_use}.
Finally, we are ready to apply Lemma \ref{lem:PFmidX}. Let us assume $\chi\ne 0$. The $\chi=0$ case will be handled in Section \ref{sub:rrt} below. 
\begin{proof}[Proof of Theorem \ref{thm:LPAT}]
Substituting equations \eqref{eq:gen_fn}, \eqref{eq:P1} into  \eqref{eq:PFmidX} we obtain
\begin{align}
\label{eq:CMJ_PFmidX_sub}
\P(\mathcal{F} \in \mathcal{P}_K \mid \mathcal E_x) &=
1-G_{\kappa \mid \mathcal{E}}(1-q)-q\P(\kappa=1 \mid \mathcal{E}) \nonumber\\
&=1-(\e^{\chi x} + (1-\e^{\chi x})(1-q))^{-\rho/\chi}+q\frac{\rho}{\chi}  (1-\e^{\chi x})\e^{-x(\rho+\chi)} \nonumber \\
&=1-(1-q(1-e^{\chi x})))^{-\rho/\chi} +\frac{\rho}{\chi}q(1-\e^{\chi x})\e^{-x(\rho+\chi)}.  \end{align}
Now, using the value $q$ from \eqref{eq:q_general}, and by $Z_g(x)=(e^{\chi x}-1)/\chi$, we have
\begin{equation}
\label{eq:q(1-e)}
q (1-e^{\chi x})= -\chi\frac{\e^{\chi x+\frac{\rho}{\rho+\chi}(1-\e^{-(\rho+\chi)x})} - \e^{\chi x}}{\rho}.
\end{equation}
Substituting \eqref{eq:q(1-e)} into the second and third terms of equation \eqref{eq:CMJ_PFmidX_sub}, the formula becomes:
\begin{align*}
\P(\mathcal{F} \in \mathcal{P}_K \mid \mathcal E_x)&=1-\left(1+\chi\frac{\e^{\chi x+\frac{\rho}{\rho+\chi}(1-\e^{-(\rho+\chi)x})} - \e^{\chi x}}{\rho} \right)^{-\rho/\chi} -\e^{-\rho x+\frac{\rho}{\rho+\chi}(1-\e^{-(\rho+\chi)x})} + \e^{-\rho x}.
\end{align*}
We apply the law of total probability with respect to the density of the doomsday clock $\tau$ with rate $\rho+\chi$ to compute
\begin{align}
\P(\mathcal{F} \in \mathcal{P}_K ) &= \int\limits_0^\infty (\rho+\chi)\e^{-x(\rho+\chi)} \P(\mathcal{F} \in \mathcal{P}_K \mid \mathcal E_x) \diff x \nonumber \\ 
&= 1-\int\limits_0^\infty (\rho+\chi)e^{-x(\rho+\chi)}\left(1+\chi\frac{e^{\chi x+\frac{\rho}{\rho+\chi}(1-\e^{-(\rho+\chi)x})} - \e^{\chi x}}{\rho} \right)^{-\rho/\chi} \diff x \nonumber \\
&\ \ \  \quad - \int\limits_0^\infty (\rho+\chi)\e^{-x(\rho+\chi)} \e^{-\rho x+\frac{\rho}{\rho+\chi}(1-\e^{-(\rho+\chi)x})} \diff x + \int\limits_0^\infty (\rho+\chi)\e^{-x(\rho+\chi)} \e^{-\rho x} \diff x.
\end{align}
Let us denote the three integrals on the right hand side by $I_1, I_2, I_3$, respectively.
The third integral can be computed explicitly as
\begin{equation}
I_3=\int\limits_0^\infty (\rho+\chi)\e^{-x(\rho+\chi)} \e^{-\rho x} \diff x=\frac{\rho+\chi}{2\rho+\chi},
\end{equation}
and we observe that this term equals $\P(\mathcal F \in \mathcal P_L)$ in \eqref{eq:leaf}, and hence it cancels when substituted back into equation \eqref{eq:BST_p_1}. So, for  \eqref{eq:BST_p_1}, we obtain the result for linear preferential attachment trees
\begin{equation}\label{eq:temp}
\begin{aligned}
\frac{\beta(\mathcal{T}_n^{(\rho, \chi)})}{n} &\xrightarrow{a.s. }-1+\int\limits_0^\infty (\rho+\chi)\e^{-x(\rho+\chi)}\left(1+\chi\frac{\e^{\chi x+\frac{\rho}{\rho+\chi}(1-\e^{-(\rho+\chi)x})} - \e^{\chi x}}{\rho} \right)^{-\rho/\chi} \diff x \\
&\ \ \  \qquad + \int\limits_0^\infty (\rho+\chi)\e^{-x(\rho+\chi)} \e^{-\rho x+\frac{\rho}{\rho+\chi}(1-\e^{-(\rho+\chi)x})} \diff x .
\end{aligned}
\end{equation}
This is the general formula for $(\rho, \chi)$ when $\chi\neq 0$. This also finishes the proof of Theorem \ref{thm:LPAT}, since the formula in \eqref{eq:thm_LPAT} is recovered when $\chi=1$.
\end{proof}
 Now we evaluate this further for the special case $\chi=-1$, and  obtain the metric dimension of $m$-ary increasing trees ($\rho=m \in \mathbb{N}$, and $\chi=-1$).
\begin{proof}[Proof of Theorem \ref{thm:BST}]
When $\chi=-1$ and $\rho=m \in \mathbb{N}$, equation \eqref{eq:temp} simplifies to 
\begin{equation}
\label{eq:temp_simpl}
\begin{aligned}
\frac{\beta(\mathcal{T}_n^{(m,-1)})}{n} &\xrightarrow{a.s. }-1+\int\limits_0^\infty (m-1)e^{-x(m-1)}\left(1-\frac{e^{- x+\frac{m}{m-1}(1-e^{-(m-1)x})} - e^{- x}}{m} \right)^{m} \diff x \\
&\ \ \  \qquad + \int\limits_0^\infty (m-1)e^{-x(m-1)} e^{-m x+\frac{m}{m-1}(1-e^{-(m-1)x})} \diff x .
\end{aligned}
\end{equation}
In the first row, the last bracket is of the form $(1-\mu+\nu)^m$, that we expand using the trinomial formula:
\[ (1-\mu+\nu)^m=\sum_{(i,j)\in \mathbb N^2:i+j\le m} \binom{m}{i,j} (-1)^i\mu^i \nu^j. \]
We apply this formula with $\mu=e^{- x+\frac{m}{m-1}(1-e^{-(m-1)x})}/m$ and $\nu={e}^{-x}/m$. After collecting terms, and taking into account that the integral in the last row of  equation \eqref{eq:temp_simpl} can be merged with the term corresponding to  $(i,j)=(1,m-1)$ of the expansion, changing the coefficient, we arrive at 
\begin{align}
\label{eq:BST_expand}
\frac{\beta(\mathcal{T}_n^{(m,-1)})}{n} &\xrightarrow{a.s. }   \sum_{\substack{0\le i+j \le m }} a_{i,j} \int\limits_0^\infty  e^{-b_{i,j}x + i\frac{ m}{m-1}(1-e^{-(m-1)x})} \mathrm \diff x,
\end{align}
where
\begin{alignat}{2}
a_{1,m-1} &= (m-1)\left(1-\frac{m}{m^{m}} \right)\\
a_{i,j} &= (m-1)\frac{(-1)^{i}}{m^{i+j}}\binom{m}{i,j} \qquad\qquad  \text{if $(i,j)\ne (1,m-1)$} \\
b_{i,j} & = (m-1)+i +j  .
\end{alignat}
For $i=0$, the coefficient $im/(m-1)$ of the doubly-exponential term in equation \eqref{eq:BST_expand} in the exponent is $0$, hence these terms simplify. We sum over the $i=0$ terms in $j$, and perform the integration to obtain
\begin{equation}
\label{eq:BST_i0}
\sum_{j=0}^m \frac{m-1}{m^j}\binom{m}{j} \int\limits_0^\infty e^{-(m-1+j)x} \mathrm \diff x=\sum_{j=0}^m \frac{m-1}{m^j}\binom{m}{j}\frac{1}{m-1+j}.
\end{equation}
Observe that the $j=0$ term is $1$, and hence cancels the $-1$ in the first term of the right hand side of equation \eqref{eq:temp_simpl}.
For the integral indexed by $(i\ne0,j)$ we can substitute $u=i \frac{m}{m-1}e^{-(m-1)x}$ which gives $x=-\log(\frac{m-1}{im}u)/(m-1)$ and $\mathrm \diff x=\frac{-1}{u(m-1)} \diff u$, to obtain
\begin{align}
\label{eq:BST_ine0}
\sum_{\substack{0\le i+j \le m \\ i\ne 0 }} a_{i,j} \int\limits_0^\infty  e^{-b_{i,j}x + \frac{ im}{m-1}(1-e^{-(m-1)x})}  \diff x &=  \sum_{\substack{0\le i+j \le m\\ i\ne 0 }} a_{i,j}' \int\limits_{\frac{im}{m-1}}^0 u^{b_{i,j}'}  e^{-u}  \diff u \nonumber \\
&= \sum_{\substack{0\le i+j \le m\\ i\ne 0 }} -a_{i,j}' \gamma \left(b_{i,j}'+1, \frac{im}{m-1} \right),
\end{align}
where
\begin{alignat}{2}
a_{i,j}' &= a_{i,j} e^{\frac{im}{m-1}} \left(\frac{m-1}{im}\right)^{\frac{b_{i,j}}{m-1}}\frac{-1}{m-1}  \label{eq:BST_a'}  \\
b_{i,j}' & = \frac{b_{i,j}}{m-1}-1= \frac{i+j}{m-1}. \label{eq:BST_b'}
\end{alignat}
Combining equations \eqref{eq:BST_expand}-\eqref{eq:BST_b'} gives the formula
\begin{equation}
\label{eq:mary_final}
\frac{\beta(\mathcal{T}_n^{(m,-1)})}{n} \xrightarrow{a.s. } \sum_{j=1}^m \frac{m-1}{(m-1+j)m^j}\binom{m}{j} - \sum_{\substack{0\le i+j \le m\\ i\ne 0 }} a'_{i,j} \gamma \left(\frac{i+j}{m-1}+1, \frac{im}{m-1} \right),
\end{equation}
which agrees with equation \eqref{eq:BST} in Theorem \ref{thm:BST} with $A_{i,j}=-a'_{i,j}$.

For the binary search tree, that is, $m=2$ we evaluate the coefficients in equations \eqref{eq:BST_i0} and \eqref{eq:BST_a'}  numerically. Starting with equation \eqref{eq:BST_i0}, then proceeding to the coefficients $a_{1,1}', a_{1,0}', a_{2,0}'$, we get
\begin{align*}
\sum_{j=1}^m \frac{m-1}{m^j}\binom{m}{j}\frac{1}{m-1+j}&=\frac12\cdot 2 \cdot \frac12+\frac13 \cdot \frac14=\frac{7}{12} \\
-a_{1,1}'&=\frac12 e^2 \frac{1}{2^3}=\frac{e^2}{2^4}\\
-a_{1,0}'&=-\frac12 2 e^2 \frac{1}{2^2}=-\frac{e^2}{2^2}\\
-a_{2,0}'&=\frac14 e^4 \frac{1}{4^2}=\frac{e^2}{2^8}.
\end{align*}
Substituting these values into equation \eqref{eq:mary_final} gives
\begin{equation}
\frac{\beta(\mathcal{T}_n^{(2,-1)})}{n} \xrightarrow{a.s. } \frac{\e^2}{2^4}\gamma(3,2)- \frac{\e^2}{4}\gamma(2,2)+\frac{e^4}{2^8}\gamma(3,4)+\frac{7}{12}=\frac{233 - 48 \e^2 + 3 \e^4}{384}.
\end{equation}
\end{proof}
Next we proceed with the random recursive tree ($\chi=0$ and $\rho=1$). The proof is analogous to the proof of Theorem \ref{thm:LPAT}. We proceed from formula \eqref{eq:q_general}.
\begin{proof}[Proof of Theorem \ref{thm:RRT}]
\label{sub:rrt}
In this case, equation \eqref{eq:q_general} yields
\begin{equation}
\label{eq:q_rrt}
q=\frac{\e^{\chi x+\frac{\rho}{\rho+\chi}(1-\e^{-(\rho+\chi)x})} - \e^{\chi x}}{\rho Z_{g}(x)}=\frac{\e^{(1-\e^{-x})} - 1}{x}.
\end{equation}
Substituting equations \eqref{eq:gen_fn}, \eqref{eq:P1} for $\chi=0$ and \eqref{eq:q_rrt} into equation \eqref{eq:PFmidX} now gives
\begin{align}
\label{eq:RRT_PFmidX_sub}
\P(\mathcal{F} \in \mathcal{P}_K \mid \mathcal E_x) &= 1-\e^{-x(1-(1-q))}-qx\e^{-x} =1-\mathrm{exp}(1-\e^{1-\e^{-x}}) -\e^{1-x-\e^{-x}}+\e^{-x}.
\end{align}
In this case, $\tau$ is exponential with rate $\rho=1$. We apply the law of total probability to compute
\begin{align}
\P(\mathcal{F} \in \mathcal{P}_K ) &= \int\limits_0^\infty  \P(\mathcal{F} \in \mathcal{P}_K \mid \mathcal E_x) \e^{- x} \diff x= 1- \int\limits_0^\infty e^{-x} \big(1-\mathrm{exp}(e^{1-e^{-x}}) +e^{1-x-e^{-x}}-e^{-x}\big) \diff x.
\end{align}
We make the substitution $u=e^{-x}$, which gives $x=-\log(u)$ and $\diff x=-\frac{1}{u}\diff u$, to get
\begin{align}
\P(\mathcal{F} \in \mathcal{P}_K ) &= 1+ \int\limits_{1}^{0} (e^{1-e^{1-u}} +ue^{1-u}-u) \diff u = 1-\int\limits_{0}^{1} e^{1-e^{1-u}} \diff u - e\gamma(2,1)+\frac12,
\end{align}
where $\gamma$ was defined in equation \eqref{eq:gammadef}. Furthermore, we substitute $v=e^{1-u}$ in the integral still remaining, which gives $u=1-\log(v)$ and $\diff u=-\frac{1}{v}\diff v$, to get
\begin{equation}
\int\limits_{0}^{1} e^{1-e^{1-u}} \diff u = -\int\limits_{e}^{1} v^{-1}e^{1-v} \diff v =e\int\limits_1^e v^{-1}e^{-v} \diff v.
\end{equation}
Substituting back into equation \eqref{eq:BST_p_1} we obtain the final result
\begin{align}
\frac{\beta(\mathcal{T}_n^{(1,0)})}{n} &\xrightarrow{a.s. } \frac12-\left(1-e\int\limits_1^e v^{-1}e^{-v} \diff v - e\gamma(2,1)+\frac12\right)=e\left(\int\limits_1^e v^{-1}e^{-v} \diff v+\gamma(2,1)\right)-1.
\end{align}
This finishes the proof.\end{proof}
         


\bibliographystyle{abbrv}
\bibliography{literature}

\end{document}